\newcommand{\GV}{\mathfrak{gv}}
\newcommand{\HC}{\check{\mathrm{H}}}
\newcommand{\Adj}{\mathrm{Adj}}
\newcommand{\wt}{\widetilde}
\newcommand{\h}{\mathsf{t}}%{\hslash}
\newcommand{\Euler}{\mathsf{e}}
\newcommand{\cg}{\mathsf{c}}
\newcommand{\cq}{{c}}
\newcommand{\bg}{\mathsf{b}}
\newcommand{\Groupq}{{\mathcal{Q}}_{0\ra 0}}
\newcommand{\Groupg}{{\mathcal{G}}_{0\ra 0}}
\newcommand{\Groupoid}{{\mathcal{H}}}
\newcommand{\Groupoidg}{{\mathcal{G}}}
\newcommand{\Groupoidq}{{\mathcal{Q}}}
\newcommand{\Algebroidg}{{\mathfrak{g}}}
\newcommand{\Algebroidq}{{\mathfrak{q}}}
\newcommand{\Obj}{{\mathrm{Obj}}}
\newcommand{\Mor}{{\mathrm{Mor}}}
\newcommand{\sg}{\mathsf{g}}
\newcommand{\sq}{\mathsf{q}}
\newcommand{\simg}{{\sim_\mathsf{g}}}
\newcommand{\simq}{{\sim_\mathsf{q}}}
\newcommand{\Morg}{{\mathrm{Mor}^{\mathsf{g}}}}
\newcommand{\Morq}{{\mathrm{Mor}^{\mathsf{q}}}}
\newcommand{\algebrag}{\mathfrak{g}_{0\ra 0}}
\newcommand{\algebraq}{\mathfrak{q}_{0\ra 0}}
\newcommand{\algebroidg}{{\mathfrak{g}}_{0}}%{0\ra *}}
\newcommand{\algebroidq}{{\mathfrak{q}}_{0}}%{0\ra *}}
\newcommand{\CM}{\mbox{\Large $\wedge$}}
\newcommand{\CMTT}{\mbox{\Large $\wedge$}^{\sharp}}
\newcommand\Moduli{{\Mod(M,C^\infty(\R),\Diff^+(\R))}}
\newcommand\Modulin[1]{{\Mod(M_{#1},C^\infty(\R),\Diff^+(\R))}}
\newcommand\Modulig{{\Mod(M,\algebroidg,\Groupg)}} 
\newcommand\Moduliig{{\Mod(M,\algebrag,\Groupg)}} 
\newcommand\Moduliiq{{\Mod(M,\algebraq,\Groupq)}} 
\newcommand\Moduliign[1]{{\Mod(M_{#1},\algebrag,\Groupg)}}
\newcommand\Moduliiqn[1]{{\Mod(M_{#1},\algebraq,\Groupq)}}
\newcommand\Cordoba{{\CMTT(M,{C^\infty(\R)})}} 
\newcommand\Cordobag{{\CM(M,\algebroidg)}} 
\newcommand\Cordobaq{{\CM(M,\algebroidq)}} 
\newcommand\Cordobaig{{\CM(M,\algebrag)}} 
\newcommand\Cordobaiq{{\CM(M,\algebraq)}} 
\newcommand\CordobaS{{\CM(M,{C^\infty(S^1)})}} 
\newcommand\Cordoban[1]{{\CMTT(M_{#1},{C^\infty(\R)})}}
\newcommand{\ConcordiaS}{\mathcal{M}(M,C^\infty(S^1),\Diff^+(S^1))}
\newcommand\Gauge{{\Omega^0(M,{\Diff^+(\R)})}}
\newcommand\Gaugeg{{\Omega^0(M,\Groupg)}}
\newcommand\Gaugeq{{\Omega^0(M,\Groupq)}}
\newcommand{\Diff}{\mathrm{Diff}}
\newcommand{\oA}{{\mathsf{A}}}%{\overline{A}}}
\newcommand{\oB}{{\mathsf{B}}}%{\overline{B}}}
\newcommand{\oC}{{\mathsf{C}}}%{\overline{C}}}
\newcommand{\oD}{{\mathsf{D}}}%{\overline{C}}}
\newcommand{\oE}{{\mathsf{E}}}%{\overline{C}}}
\newcommand{\oX}{{\mathsf{X}}}%{\overline{X}}}
\newcommand{\oY}{{\mathsf{Y}}}
\newcommand{\oZ}{{\mathsf{Z}}}
\newcommand{\oa}{{\mathsf{X}}}
\newcommand{\ob}{{\mathsf{Y}}}
\newcommand{\oc}{{\mathsf{Z}}}
\newcommand{\od}{{\mathsf{W}}}
\newcommand{\qA}{A}%{{\oA_{\mathsf{q}}}}
\newcommand{\qY}{Y}%{{\oX_{\mathsf{q}}}}
\newcommand{\qa}{X}%{{\oa_{\mathsf{q}}}}
\newcommand{\quot}{\mathsf{T}}
\newcommand{\proj}{\mathsf{proj}}
\newcommand{\Hall}{\mathrm{H}}
\newcommand{\HG}{\mathrm{H}_{\Algebroidg}}
\newcommand{\HQ}{\mathrm{H}_{\Algebroidq}}
\newcommand{\ra}{\rightarrow}
\newcommand{\Ibb}{\mathbb{I}}
\newcommand{\Rbb}{\mathbb{R}}
\newcommand{\R}{\Rbb}
\newcommand{\Z}{\mathbb{Z}}
\newcommand{\Bcal}{\mathcal{B}}
\newcommand{\Ccal}{\mathcal{C}}
\newcommand{\Dcal}{\mathcal{D}}
\newcommand{\Fcal}{\mathcal{F}}
\newcommand{\Lcal}{\mathcal{L}}
\newcommand{\Mcal}{\mathcal{M}}
\newcommand{\Ucal}{\mathcal{U}}
\newcommand{\Fscr}{\mathscr{F}}
\newcommand{\Fol}{\Fscr}
\newcommand{\gfrak}{\mathfrak{g}}
\newcommand{\ov}{\overline}
\def\endproof{\relax\ifmmode\expandafter\endproofmath\else
  \unskip\nobreak\hfil\penalty50\hskip.75em\hbox{}\nobreak\hfil\bull
  {\parfillskip=0pt \finalhyphendemerits=0 \bigbreak}\fi}
\def\endproofmath$${\eqno\bull$$\bigbreak}
\def\bull{\vbox{\hrule\hbox{\vrule\kern3pt\vbox{\kern6pt}\kern3pt\vrule}\hrule}}
\newcommand{\Ker}{\mathrm{Ker}}
\newcommand{\Hom}{\mathrm{Hom}}
\newtheorem{thm}{Theorem}[section]
\newtheorem{prop}[thm]{Proposition}
\newtheorem{cor}[thm]{Corollary}
\newtheorem{lem}[thm]{Lemma}
\newtheorem{defn}[thm]{Definition}
\newtheorem{remark}[thm]{Remark}
\newcommand{\Mod}{\Mcal}
\newcommand{\Group}{\mathfrak{G}}
\newcommand{\lra}{\longrightarrow}
\begin{document}
\title[Gauge theory and foliations I]
{Gauge theory and foliations I;\\ germ cords versus quantum cords}%
\author{Mehrzad Ajoodanian, Eaman Eftekhary}%
\address{School of Mathematics, Institute for Research in 
Fundamental Science (IPM),
P. O. Box 19395-5746, Tehran, Iran}%
\email{mehrzad@ipm.ir}
\email{eaman@ipm.ir}
\date{December 2017}%
%----------------------------------------------------------------
\begin{abstract}
We apply gauge theory to study the space $\Fcal_k(M)$ of  
smooth codimension-$k$ framed foliations  
on a smooth manifold $M$. 
The quotient of Maurer-Cartan elements
by the action of an infinite dimensional non-abelian gauge groupoid
forms a moduli space, which contains $\Fcal_k(M)$ as a subspace.
The  notion of holonomy is naturally extended to this moduli space 
and the cohomology theory associated with points of this moduli 
space  which correspond to non-singular foliations coincides with 
Bott cohomology. The quotient of the moduli space under 
concordance is identified as the space of homotopy classes of maps 
to the classifying spaces $B\Gamma^\sg_k$ and $B\Gamma^\sq_k$.
While $B\Gamma^\sg$ is a classic and has been studied since 
Haefliger, $B\Gamma^\sq$  (which is a quotient of $B\Gamma^\sg$) 
carries a simpler topology and offers a rival theory. 
\end{abstract}
\maketitle

\section{Introduction}
Foliations grew out of Poincare's qualitative theory of differential 
equations and Ehresmann's connection theory on vector bundles. The 
central idea in both, is the notion of holonomy or monodromy which 
dates even farther back to the time of Cauchy and Riemann.
Gauge theory and foliations crossed paths several times. Perhaps the 
first happened in the 40's as Ehrasmann developed the connection 
theory of vector bundles and generalized Poincare's holonomy of 
a loop lying on a leaf of a foliation. In 70's, the Godbillon-Vey 
invariants of foliations were introduced \cite{Godbillon-Vey}
few years prior to the Chern-Simons functional in gauge theory. The 
similarity between the two intrigued and inspired mathematicians. 
Most notably, R. Bott  introduced the notion of (partial) Bott 
connection on the normal bundle of a foliation, 
c.f.~\cite{Bott-lectures}. Bott connection is flat on the 
leaves, generalizing the Reeb's class (which measures the transverse 
holonomy expansion \cite{Ghys-GV-1}) to higher codimensions. 
Theory of foliations went through rapid developments in 70's,
as Thurston proved his important existence result 
\cite{Thurston-Existence} and Haefliger structures provided a 
framework for classification of foliations up to concordance 
\cite{Haefliger-lectures,Haefliger-1}.
Mather \cite{Mather-on-Haefliger,Mather-homology} and 
Thurston \cite{Thurston-classification} proved important theorems 
about the classifying space of Haefliger structures, which may be 
compared with classification results for homogenous foliations
and flat connections \cite{Characteristic-classes,Blumenthal}. 
These classification results 
linked the study of foliations on spheres to the homotopy theory 
of the classifying space of Haefliger structures and resulted in 
several existence and non-existence theorems 
\cite{Haefliger-analytic,Thurston-GV, Thurston-classification}. 
Despite these similarities, a path
from foliations back to gauge theory has been missing.\\

Contrary to Ehresmann who saw a 
foliation in a flat  connection, we associate flat connections to  
framed foliations. Such connections turn out 
to be gauge equivalent. 
We  examine Frobenius equation from a gauge theoretic point of view 
and identify nonabelian infinite dimensional gauge groups. 
If a smooth codimension-one foliation $\Fol$ on $M$ is given by a 
$1$-form $a_0\in\Omega^0(M,\R)$, Frobenius  equation implies that
$da_0=a_1a_0$ for another $1$-form $a_1\in \Omega^1(M,\R)$ which
is determined up to scalar multiples of $a_0$. The process may 
be repeated to obtain the $1$-forms
$a_2,a_3,a_4,\cdots \in\Omega^1(M,\R)$ so that their derivatives
satisfy a sequence of equations, starting with $da_1=a_2a_0$ 
and $da_2=a_3a_0+a_2a_1$. 
One motivation for the current work is to present the  
{\emph{Godbillon-Vey sequences}}  $(a_0,a_1,a_2,...)$ 
\cite{Godbillon, Ghys-GV-1} as geometric objects (gauge fields) and 
observe their local symmetries through gauge action.  Gauge theory then 
suggests a study of the moduli space of foliations, while we are
 lead to welcome certain singular objects (foliations).
Two groupoids are encountered along the journey, as we consider 
smooth framed foliation of codimension $k$  on a manifold $M$. 
The first groupoid is $\Groupoidq_k$ which consists 
of formal power series of the form 
\[\qY=\sum_{i=1}^k
\sum_I%{\substack{I=(i_1,\ldots,i_k)\in\Z^k\\ i_1,\ldots,i_k\geq 0}}
y_{i,I}(\h_1-x_1)^{i_1}\cdots (\h_k-x_k)^{i_k}\partial_i
=\sum_{i,I}y_{i,I}(\h-x)^I\partial_i\] 
in the formal  
variables $\h_1,\ldots,\h_k$ with $\det(y_{i,j})_{i,j=1}^k>0$. 
The index $I$ runs over the $k$-tuples $(i_1,\ldots,i_k)\in\Z^k$ of 
non-negative integers and $\partial_1,\ldots,\partial_k$ denote 
the standard unit vectors of $\R^k$.
The power series $\qY$ is 
realized as an arrow from the source 
$x=(x_1,\ldots,x_k)\in\R^k=\Obj(\Groupoidq_k)$ to 
the target $y=(y_{1,\emptyset},\ldots,y_{k,\emptyset})
\in\R^k=\Obj(\Groupoidq_k)$. 
The groupoid $\Groupoidq_k$ acts on its Lie 
algebra $\Algebroidq_k$ of all power series of the form 
$\qA=\sum_{i,I} a_{i,I}(\h-x)^I\partial_i$. Alternatively, a 
parallel theory is created if we replace the gauge groupoid
$\Groupoidq_k$ with the groupoid $\Groupoidg_k$ of germs of local 
diffeomorphisms of $\R^k$  and replace $\Algebroidq_k$ with the 
algebroid $\Algebroidg_k$ of germs of smooth $\R^k$-valued maps at 
points of $\R^k$. For every smooth manifold $M$, 
$\Omega^0(M,\Groupoidq_k)$ acts on  flat $\Algebroidq_k$-valued 
$1$-forms, which are called {\emph{quantum cords}} and are denoted 
by $\CM(M,\Algebroidq_k)$. Similarly,  $\Omega^0(M,\Groupoidg_k)$ 
acts on the space $\CM(M,\Algebroidg_k)$ of 
{\emph{germ cords}}. We write $\Gamma^\sg_k$ and $\Gamma^\sq_k$ for 
$\Groupoidg_k$ and $\Groupoidq_k$ if we want to emphasize 
that the discrete topology is chosen on the space of arrows 
from $x\in\R^k$ to $y\in\R^k$. If $(a_0,a_1,a_2,\ldots)$ is the 
Godbillon-Vey sequence associated with a transversely oriented 
codimension one foliation $\Fol$ on $M$, 
$\qA=\sum_n a_n\h^n\in\CM(M,\Algebroidq_1)$ would be a quantum 
cord. Our main theorem  may be stated as follows:

\begin{thm}\label{thm:classification-general}
The space $\Fcal_k(M)$ of smooth framed codimension $k$ foliations 
of $M$  is embedded in the moduli 
space of germs cords and quantum cords (upto gauge action)
while there  are natural classification maps $\cg$ and $\cq$ 
from these moduli spaces to the space of all Haefliger 
$\Gamma^\sg_k$-structures and $\Gamma^\sq_k$-structures
(respectively), which make the following diagram commutative. 
\begin{equation}\label{eq:T-is-surjective-general}
\begin{diagram}
\Fcal_k(M)&\rInto{\ \ \ \ \Ibb^\sg_k\ \ \ \ } &
\Mod(M,\Algebroidg_k,\Groupoidg_k)=\CM(M,\Algebroidg_k)/
\Omega^0(M,\Groupoidg_k)&\rTo{\ \ \cg\ \ }& \HC^1(M,\Gamma^\sg_k)\\
\dTo{Id}&&\dTo{\quot}&&\dTo{\quot}\\
\Fcal_k(M)&\rInto{\ \ \ \ \Ibb^\sq_k\ \ \ \ } &
\Mod(M,\Algebroidq_k,\Groupoidq_k)=\CM(M,\Algebroidq_k)/
\Omega^0(M,\Groupoidq_k)&\rTo{\ \ \cq\ \ }& \HC^1(M,\Gamma^\sq_k)\\
\end{diagram}
\end{equation} 
The maps $\quot$ are obtained by taking Taylor expansions. 
The classification maps $\cg$ and $\cq$ induce the maps
\begin{align*}
\cg:\Ccal(M,\Algebroidg_k)=\CM(M,\Algebroidg_k)/\sim_\sg\ra 
[M,B\Gamma^\sg_k]\ \ \ \text{and}\ \ \ 
\cq:\Ccal(M,\Algebroidq_k)=\CM(M,\Algebroidq_k)/\sim_\sq\ra 
[M,B\Gamma^\sq_k]
\end{align*}
from the germs concordia (concordance classes of germ cords) and 
quantum concordia (concordance classes of quantum cords) to the 
spaces of homotopy classes of maps from $M$ to the classifying 
spaces $B\Gamma^\sg_k$ and $B\Gamma^\sq_k$, respectively.
\end{thm}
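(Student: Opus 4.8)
The plan is to establish the four assertions of the theorem—the embedding of framed foliations, the construction of the classification maps $\cg$ and $\cq$, the commutativity of \eqref{eq:T-is-surjective-general}, and the descent to concordance—by first settling the local dictionary between framed foliations and flat algebroid-valued $1$-forms, and then globalizing by a holonomy cocycle. Throughout, the backbone relating the two rows is the Taylor-expansion functor $\Groupoidg_k\to\Groupoidq_k$ that sends a germ of local diffeomorphism to its formal power series, together with the induced $\Algebroidg_k\to\Algebroidq_k$; the map $\quot$ is exactly this operation. My strategy is to verify at each stage that this functor commutes with the maps being constructed, which simultaneously supplies the vertical arrows $\quot$ and the commutativity of both squares.

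First I would construct the embeddings $\Ibb^\sg_k$ and $\Ibb^\sq_k$. Given a framed codimension-$k$ foliation $\Fol$ on $M$, a choice of local defining submersions together with the framing of the normal bundle produces, via the iterated Frobenius relations recalled in the introduction, a collection of local algebroid-valued $1$-forms; the Godbillon--Vey--type recursion shows these assemble into a single flat (Maurer--Cartan) $\Algebroidg_k$-valued $1$-form, i.e. a germ cord in $\CM(M,\Algebroidg_k)$, whose Taylor expansion is the corresponding quantum cord. Injectivity modulo gauge is the statement that a framed foliation can be reconstructed from its cord: I would integrate the flat connection to recover the germs of the local first integrals, observe that two framed foliations yielding gauge-equivalent cords have first integrals differing by an element of $\Omega^0(M,\Groupoidg_k)$, and conclude that the framed foliations coincide. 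By construction $\quot\circ\Ibb^\sg_k=\Ibb^\sq_k$, so the left-hand square of \eqref{eq:T-is-surjective-general} commutes and its leftmost vertical map is the identity.

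Next I would define $\cg$ and $\cq$ by the holonomy construction that extends the classical notion discussed earlier. Over a good cover $\{U_\alpha\}$ of $M$, integrating the flat $\Algebroidg_k$-connection trivializes the cord on each $U_\alpha$ and records the transition data as germs of local diffeomorphisms of $\R^k$; these germs satisfy the Haefliger cocycle condition and therefore define a class in $\HC^1(M,\Gamma^\sg_k)$. The essential point is that this class is independent of the trivializing choices up to \v{C}ech coboundary, and that a gauge transformation in $\Omega^0(M,\Groupoidg_k)$ alters the cocycle only by such a coboundary, so $\cg$ descends to the moduli space $\Mod(M,\Algebroidg_k,\Groupoidg_k)$; the map $\cq$ is defined identically with germs replaced by their Taylor series, landing in $\HC^1(M,\Gamma^\sq_k)$. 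Because the Taylor functor carries the germ transition cocycle to the power-series transition cocycle, one obtains $\quot\circ\cg=\cq\circ\quot$, which is the right-hand square.

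Finally, for the descent to concordance I would realize a concordance of cords as a cord on $M\times[0,1]$ and show that the holonomy construction sends it to a concordance of the associated $\Gamma^\sg_k$- (resp. $\Gamma^\sq_k$-) structures, so that $\cg$ and $\cq$ factor through $\Ccal(M,\Algebroidg_k)=\CM(M,\Algebroidg_k)/\sim_\sg$ and $\Ccal(M,\Algebroidq_k)=\CM(M,\Algebroidq_k)/\sim_\sq$. Invoking Haefliger's identification of concordance classes of $\Gamma$-structures with homotopy classes of maps to the classifying space then yields the stated maps to $[M,B\Gamma^\sg_k]$ and $[M,B\Gamma^\sq_k]$. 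I expect the main obstacle to be precisely the well-definedness of the holonomy cocycle: since the gauge groupoid is infinite-dimensional and non-abelian, integrating the Maurer--Cartan equation and proving that the resulting germ-valued \v{C}ech class is invariant under gauge and independent of the cover and trivializations requires careful control of the integration of the flatness condition and of how gauge acts on the local first integrals. The injectivity of the embedding is a secondary difficulty of the same nature, resting on the faithfulness of the reconstruction of a foliation from its cord.
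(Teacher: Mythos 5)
Your outline reproduces the paper's own route: a framed foliation is turned into a germ cord by the flow/Frobenius recursion (Proposition~\ref{prop:MC-elements}), the embedding follows because the gauge action preserves the fibers over $\Fcal_k(M)$ and is transitive and free on them (Theorem~\ref{thm:gauge-group}, Remark~\ref{rmk:group-to-groupoid}); the maps $\cg,\cq$ are built from local trivializations and the transition cocycle $\oY_\alpha\circ\oY_\beta^{-1}$ exactly as in Theorem~\ref{thm:classification}; the Taylor functor intertwines everything; and the concordance statement descends via Proposition~\ref{prop:gauge-equivalence-implies-homotopy} and Haefliger's identification of concordance classes of $\Gamma$-structures with $[M,B\Gamma]$. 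So the approach is essentially the paper's.

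One step, however, would fail as literally written. You propose to define $\cq$ ``identically, with germs replaced by their Taylor series,'' obtaining the local trivializations of a quantum cord by ``integrating the flat connection.'' For $\Algebroidq_k$-valued forms there is no such integration: a formal power series solution of the Maurer--Cartan equation need not come from any actual local first integral, and no Poincar\'e lemma is available in the quantum setting (the paper's Lemma~\ref{lem:Poincare-Lemma} is proved only for germ cords, using the genuine foliation $\Fol^\oA$ on a neighborhood of the graph of $s_\oA$). The paper sidesteps this by \emph{defining} a quantum cord to be a flat $\Algebroidq_k$-valued $1$-form that is locally of the form $\qY\star 0$ for some $\qY\in\Omega^0(U_x,\Groupoidq_k)$ --- equivalently, by restricting to images of germ cords under $\quot$. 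Your argument goes through once you either adopt that convention or prove local triviality separately for the quantum cords you consider; without it, the cocycle defining $\cq$ does not exist.
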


Let us fix the codimension $k$ and drop it from the notation for 
simplicity. Let us denote the set of arrows in 
$\Groupoidg=\Groupoidg_k$
which start at source $x\in\R^k$ and end at  target $y\in\R^k$ 
by $\Groupoidg_{x\ra y}$. Similarly, define $\Groupoidq_{x\ra y}$ and 
note that $\Groupg$ and $\Groupq$ are both topological groups,
where $\Groupq$ has the structure of an infinite dimensional 
Lie group. Let $\algebraq$ and $\algebrag$ denote the 
{\emph{Lie algebras}} of  $\Groupq$ and $\Groupg$, respectively. 
The cords in $\Cordobaiq$ and $\Cordobaig$ are called the 
{\emph{impotent}} quantum and germ cords respectively.
The algebras $\algebrag$ and $\algebraq$ are (respectively) 
sub-algebras of the fibers $\algebroidg$ and $\algebroidq$ of 
$\Algebroidg$ and $\Algebroidq$ over $0\in\R^k$.
An important source of examples of impotent quantum and germ cords
is the restriction of a quantum or germ cord to the leaves of 
a foliation $\Fol\in\Fcal_k(M)$.

\begin{thm}\label{thm:classification-impotent}
For every smooth manifold $M$, there are 
natural bijections
\begin{align*}
&\rho_{M}^\sg:\Cordobaig/\Omega^0(M,\Groupg)\lra 
\Hom(\pi_1(M),\Groupg)/\Groupg\ \ \ \ \text{and}\\
&\rho_{M}^\sq:\Cordobaiq/\Omega^0(M,\Groupq)\lra 
\Hom(\pi_1(M),\Groupq)/\Groupq.
\end{align*}
If $L$ is a leaf of a foliation $\Fol\in\Fcal_k(M)$ 
which corresponds to a germ cord $\oA\in\Cordobag$, the 
conjugacy class of the homomorphism 
$\rho_{L}^\sg(\oA|_{L}):\pi_1(L)\ra \Groupg$
gives the holonomy of the leaf $L$. 
\end{thm}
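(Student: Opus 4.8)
The plan is to realize both $\rho_M^\sg$ and $\rho_M^\sq$ as the holonomy (Riemann--Hilbert) correspondence, treating an impotent cord $A\in\Cordobaig$ (resp. $A\in\Cordobaiq$) as a flat connection on the trivial $\Groupg$- (resp. $\Groupq$-)bundle over $M$ and reading off its monodromy. Concretely, for a piecewise-smooth loop $\gamma\colon[0,1]\to M$ based at $p$, I would define the parallel transport $h_\gamma\in G$ (with $G=\Groupg$ or $\Groupq$) as the endpoint value $h(1)$ of the solution of $\dot h(t)=-A(\dot\gamma(t))\,h(t)$, $h(0)=e$. The flatness condition $dA+A\wedge A=0$, which is exactly the Maurer--Cartan equation cutting out $\CM(M,\algebrag)$ and $\CM(M,\algebraq)$, is the integrability condition ensuring that $h_\gamma$ depends only on the homotopy class of $\gamma$ rel endpoints; this is the standard computation that curvature is the obstruction to path-independence. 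Hence $\gamma\mapsto h_\gamma$ descends to a homomorphism $\pi_1(M,p)\to G$ whose conjugacy class is independent of the base point, giving the candidate maps $\rho_M^\sg$ and $\rho_M^\sq$.

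I would then verify the properties making these bijections. Well-definedness on gauge orbits: a gauge transformation $g\in\Omega^0(M,G)$ acts by $A\mapsto gAg^{-1}-(dg)g^{-1}$ and replaces $h_\gamma$ by $g(p)\,h_\gamma\,g(p)^{-1}$, so $\rho_M$ factors through $\Cordobaig/\Omega^0(M,\Groupg)$ (resp. the quantum quotient). Injectivity: if two impotent cords $A_0,A_1$ have conjugate holonomy, I would build a gauge transformation intertwining them by a developing-map construction, setting $g(x)$ equal to the composite of the two parallel transports from $p$ to $x$ (well-defined by flatness) and checking $g$ is a smooth $G$-valued map carrying $A_0$ to $A_1$. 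Surjectivity: given $\rho\colon\pi_1(M)\to G$, I would descend the trivial flat connection on the universal cover $\wt M$ along the $\rho$-twisted deck action to obtain a flat $G$-bundle, then transport it to an $\algebrag$- (resp. $\algebraq$-)valued $1$-form.

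The main obstacle is that $G$ is infinite dimensional, so the transport ODE and the gauge constructions must be justified outside finite-dimensional Lie theory, and surjectivity hides a genuine subtlety. On the quantum side things are benign: $\Groupq$ is the formal-power-series Lie group of the statement, composition and the exponential act termwise, and the transport equation can be solved order by order in $\h_1,\dots,\h_k$, so no convergence issue arises and the solution is automatically a formal diffeomorphism with positive linear Jacobian, i.e. an element of $\Groupq$. The germ side is delicate because $\Groupg$ is only a topological group: here I would integrate the time-dependent germ vector field $A(\dot\gamma(t))$ as a composition of germs of local flows, control their domains along the compact loop $\gamma$, and show the resulting germ fixing $0$ is independent of choices, or else transport the quantum solution through the Taylor map $\quot$ of Theorem~\ref{thm:classification-general}. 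The surjectivity step in addition requires that every flat $\Groupg$- (resp. $\Groupq$-)bundle over $M$ be trivializable, so that the descended connection is an honest global $1$-form; confirming this for the groups at hand (or enlarging the cord formalism to all flat bundles) is where the real work lies, and I expect it to be the hardest point.

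For the final holonomy assertion I would first note that if $\oA\in\Cordobag$ represents a framed foliation $\Fol\in\Fcal_k(M)$ and $L$ is a leaf, then the restriction $\oA|_L$ lands in the isotropy subalgebra $\algebrag$ and is flat on $L$ --- this is precisely the Bott-connection flatness along leaves recalled in the introduction --- so $\oA|_L\in\Cordobaig$ and $\rho_L^\sg(\oA|_L)$ is defined by the bijection above. It then remains to identify this representation $\pi_1(L)\to\Groupg$ with the classical leaf holonomy. I would do so by comparing, for a loop $\gamma$ in $L$, the transport germ $h_\gamma$ with the transverse first-return map along $\gamma$: since the cord encodes the transverse infinitesimal dynamics of $\Fol$, the transport ODE integrates this data to the germ of the Poincar\'e return diffeomorphism, yielding equality of the two germs and hence of their conjugacy classes.
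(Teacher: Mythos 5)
Your route is genuinely different from the paper's: you build $\rho_M^\sg$ and $\rho_M^\sq$ directly as a parallel-transport/monodromy correspondence, whereas the paper first proves a Poincar\'e lemma for impotent cords (Lemma~\ref{lem:locally-trivial}), extracts locally constant transition functions $c_{\alpha\beta}=\oY_\alpha\circ\oY_\beta^{-1}$ from local trivializations $\oA|_{U_\alpha}=\oY_\alpha\star 0$, and lands in $\HC^1(M,\Groupg)\simeq\Hom(\pi_1(M),\Groupg)/\Groupg$. Your transport picture is in fact the paper's own ``second/third description'' of the monodromy in Section~4, and your treatment of well-definedness on gauge orbits, injectivity via a developing map, and the identification of $\rho_L^\sg(\oA|_L)$ with the Poincar\'e return germ of a leaf all match arguments the paper makes (the last one is Proposition~\ref{prop:holonomy-of-leaves}). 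The \v{C}ech route has the advantage of sidestepping the infinite-dimensional ODE issues you worry about on the germ side: local triviality is obtained by a Frobenius/commuting-vector-fields argument rather than by integrating a time-dependent germ vector field along loops.

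The genuine gap is the one you flag yourself and then leave open: surjectivity. Descending the trivial flat connection from $\widetilde M$ along the $\rho$-twisted deck action produces a flat $\Groupg$- (resp.\ $\Groupq$-) bundle, but to get an element of $\Cordobaig$ (resp.\ $\Cordobaiq$) you need that bundle to be trivial as a bundle, so that the connection becomes a \emph{global} $\algebrag$- (resp.\ $\algebraq$-) valued $1$-form; you say this ``is where the real work lies'' without doing it, so as written the proof does not establish that $\rho_M^\sg$ and $\rho_M^\sq$ are onto. The paper closes exactly this hole with a short convexity argument: given a cocycle $\{c_{\alpha\beta}\}$ and a partition of unity $\{\lambda_\gamma\}$, it sets $\oY_\alpha^{-1}(\h,x)=\sum_\gamma\lambda_\gamma(x)\,c_{\gamma\alpha}(\h)$, which is again a germ (resp.\ formal series) fixing $0$ with positive derivative because a convex combination of such elements has positive derivative at $0$; the resulting local cords $\oY_\alpha\star 0$ agree on overlaps and glue to a global impotent cord realizing the given class. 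You should either import this partition-of-unity construction or prove directly that every flat $\Groupg$- and $\Groupq$-bundle over $M$ is trivializable; without one of these, the ``bijection'' claim reduces to an injection. A smaller point: your formula $A\mapsto gAg^{-1}-(dg)g^{-1}$ is the matrix-group shorthand, and in this paper the action is $\ob\star\oA=(\oA\circ\ob-d\ob)/\ob'$, so the computation that transport changes by conjugation by $g(p)$ should be redone in that form rather than asserted.
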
   

This theorem gives a way to generalize the notion of leaves and 
their holonomy for singular foliations, i.e. arbitrary gauge 
equivalence classes 
\[\Fol\in\Cordobag/\Omega^0(M,\Groupg)
=\CM(M,\Algebroidg)/\Omega^0(M,\Algebroidg).\]
 A {\emph{leaf-like}} map for 
the singular foliation $\Fol$, which corresponds to some 
$\oA\in\Cordobag$, is a diffeomorphism $f:L\ra M$ from 
a smooth manifold $L$ to $M$ such that $f^*\oA$ is impotent. 
The definition is  independent of the choice of the representative
$\oA$ for the singular foliation $\Fol$. Associated with a leaf-like
map $f:L\ra M$, we obtain the conjugacy class of a holonomy map
\[\rho_{\Fol,L}=\rho_{L}^\sg(\oA|_{L})
\in\Hom(\pi_1(L),\Groupg)/\Groupg\simeq 
\Mod(L,\algebrag,\Groupg).\]
This notion of holonomy generalizes the usual holonomy map for 
the leaves of non-singular foliations. It is nice to compare this approach 
with the approaches of \cite{Holonomy-Groupoid}. \\

Every flat $1$-form may be used to define a twisted differential 
on differential forms. In particular, each 
$\oA\in\CM(M,\Algebroidg_k)$ gives a differential
\[\nabla_{\oA}:\Omega_{s_\oA}^*(M,\Algebroidg)\ra 
\Omega^{*+1}_{s_\oA}(M,\Algebroidg)\]
which satisfies $\nabla_\oA\circ\nabla_\oA=0$. 
Here, the subscript $s_\oA$ indicates that we only consider 
the differential forms
$\oE\in\Omega^*(M,\Algebroidg)$ which satisfy $s_\oE=s_\oA$, i.e.
the source maps associated with $\oA$ and $\oE$ are the same. 
Correspondingly, we obtain the cohomology groups associated with 
$\oA$ which are denoted by $\HG^*(M,\oA)$. Similarly, we can define 
the cohomology groups $\HQ^*(M,\qA)$ for every 
$\qA\in\CM(M,\Algebroidq_k)$.

\begin{thm}\label{thm:cohomology}
The cohomology groups $\HG^*(M,\oA)$ and $\HQ^*(M,\qA)$
are independent of the choice of $\oA$ and $\qA$ in their
gauge equivalence classes and their isomorphism classes are 
well-defined for every singular foliation 
\[\Fol\in \Mod(M,\Algebroidg_k,\Groupoidg_k)
%\simeq \CM(M,\algebroidg)/\Omega^0(M,\Groupg)
\ \ \ \text{or} \ \ \ \Fol\in \Mod(M,\Algebroidq_k,\Groupoidq_k)
%\simeq \CM(M,\algebroidq)/\Omega^0(M,\Groupq)
.\]
If $\Fol$ is non-singular, the cohomology groups 
are both isomorphic to the Bott cohomology of $\Fol$. 
\end{thm}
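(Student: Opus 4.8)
The plan is to establish the three assertions in order, carrying out the argument for the germ cohomology $\HG^*(M,\oA)$ in full and noting that the quantum case $\HQ^*(M,\qA)$ is formally identical after replacing $\Algebroidg$ by $\Algebroidq$. For the gauge invariance, I would use that a gauge transformation $g\in\Omega^0(M,\Groupoidg)$ sending $\oA$ to $\oA'=g\cdot\oA$ acts on $\Algebroidg$-valued forms by the pointwise groupoid action and conjugates the twisted differential, $\nabla_{\oA'}=g\circ\nabla_\oA\circ g^{-1}$. This conjugation identity is the integrated form of the action already used to define $\oA'$ as a flat $1$-form, so it should fall out of the Maurer--Cartan formalism; combined with $\nabla_\oA\circ\nabla_\oA=0$ it shows that $g$ intertwines the two differentials. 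The one point that needs care is that $\nabla_\oA$ lives only on the source-constrained complex $\Omega^*_{s_\oA}(M,\Algebroidg)$, while $g$ moves the source map from $s_\oA$ to $s_{\oA'}$; here I would check that composition with $g$ restricts to a bijection $\Omega^*_{s_\oA}\to\Omega^*_{s_{\oA'}}$, which is automatic because $g$ identifies the source fibre over each point of $M$ with its image isomorphically. Granting this, $g$ is a cochain isomorphism, so $\HG^*(M,\oA)\cong\HG^*(M,\oA')$. Since a singular foliation $\Fol\in\Mod(M,\Algebroidg_k,\Groupoidg_k)$ is by definition a gauge-equivalence class, this immediately makes the isomorphism class of $\HG^*(M,\oA)$ an invariant of $\Fol$, and the same argument applies verbatim to $\HQ^*$ over $\Mod(M,\Algebroidq_k,\Groupoidq_k)$.

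The substantial step is the identification with Bott cohomology in the non-singular case, and here I would choose the distinguished representative $\oA$ built directly from $\Fol$ --- the one assembled from the jets of the transverse holonomy, i.e.\ from the Godbillon--Vey data $(a_0,a_1,a_2,\ldots)$ of the introduction. The idea is to recognise the flat connection $\nabla_\oA$ as the Bott partial connection on the bundle of normal germs of $\Fol$: the flatness relations $da_0=a_1a_0$, $da_1=a_2a_0$, $da_2=a_3a_0+a_2a_1,\ldots$ are precisely the statement that this connection is flat along the leaves, and the source constraint $s_\oE=s_\oA$ cuts out exactly the forms on which the Bott differential is defined. Matching $\nabla_\oA$ with the Bott differential term by term should produce a cochain isomorphism, or at worst a quasi-isomorphism, between $(\Omega^*_{s_\oA}(M,\Algebroidg),\nabla_\oA)$ and the Bott complex of $\Fol$, giving $\HG^*(M,\oA)\cong H^*_{\mathrm{Bott}}(\Fol)$.

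To transfer the conclusion to $\HQ^*$, I would invoke the Taylor-expansion morphism $\quot:\Algebroidg\to\Algebroidq$ appearing in \eqref{eq:T-is-surjective-general}. Being a morphism of algebroids, it induces a cochain map between the two twisted complexes, and for a genuine smooth foliation the germ of the transverse holonomy and its full jet carry the same leafwise and normal information, so I expect this map to be a quasi-isomorphism; this yields $\HQ^*(M,\qA)\cong\HG^*(M,\oA)\cong H^*_{\mathrm{Bott}}(\Fol)$ and closes the non-singular case. The main obstacle I anticipate is precisely this matching with Bott cohomology: it requires a clean dictionary between the gauge-theoretic flat-connection picture and Bott's connection on the normal bundle, and in particular a description of the source-constrained forms $\Omega^*_{s_\oA}(M,\Algebroidg)$ in classical foliation-theoretic terms, together with the verification that the quasi-isomorphism survives the passage through the Taylor expansion $\quot$.
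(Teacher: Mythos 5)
Your first step --- gauge invariance --- is exactly what the paper does in Proposition~\ref{prop:cohomology}: the map $\Phi_{\oA\ra\oB}(\od)=(\od\circ\ob)/\ob'$ intertwines $\nabla_\oA$ and $\nabla_\oB$ (verified there by a direct computation rather than by appeal to the Maurer--Cartan formalism, but the content is the same), and your remark about the source constraint is the right point to check. That part of the plan is sound and matches the paper.

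The gap is in the Bott comparison. The complex $(\Omega^*_{s_\oA}(M,\Algebroidg),\nabla_\oA)$ has as underlying space forms valued in germs (resp.\ formal power series), whereas the Bott complex consists of real-valued forms annihilated by $\imath_V$; these are not isomorphic as graded vector spaces, so no ``term by term matching'' can produce a cochain isomorphism, and the quasi-isomorphism you fall back on needs a mechanism you have not supplied. The paper's mechanism (Theorem~\ref{thm:k-cohomology}) is a contraction built from the transverse vector field $V$: given a cocycle $X=\sum_n x_n\h^n$, one inductively constructs $Y$ with $y_0=0$ and $y_n=\frac{1}{n}\imath_V(x_{n-1}+dy_{n-1})$ so that $X+\nabla_A(Y)$ satisfies $\imath_V=0$; applying $\imath_V$ to the cocycle condition then forces $X'=L(X)$, i.e.\ $X=e^{\h L}x_0$, so the normalized cocycle is determined by its initial term $x_0$, and the cocycle and coboundary conditions on $X$ translate precisely into $d_{a,V}(x_0)=0$ and $x_0=d_{a,V}(y_0)$. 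This normalization-plus-reduction-to-the-initial-term is the missing idea. Separately, your plan to deduce the quantum case from the germ case via the Taylor-expansion map $\quot$ rests on the unproved assertion that $\quot$ induces a quasi-isomorphism of twisted complexes; since Taylor expansion kills flat germs, this is not automatic, and the paper avoids it entirely by running the same contraction argument for $\HG^*$ and $\HQ^*$ separately, each landing directly in $H^*_{a,V}(M)$.
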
  

The investigations of this paper indicate that the moduli spaces 
$\Mod(M,\Algebroidg_k,\Groupoidg_k)$ and 
$\Mod(M,\Algebroidq_k,\Groupoidq_k)$
share many properties as {\emph{completions}} of the space of 
all smooth framed foliations on $M$.  
The notion of concordance for germ cords and quantum cords, 
gives the quotients $\Ccal(M,\Algebroidg_k)$ and 
$\Ccal(M,\Algebroidq_k)$ which may be studied through 
the classification spaces $B\Gamma^\sg_k$ and $B\Gamma^\sq_k$ of 
$\Gamma^\sg_k$ and $\Gamma^\sq_k$, respectively.
The results of this paper would thus suggest that the space 
$[M,B\Gamma^\sq_k]$ of homotopy classes of maps from $M$ to 
$B\Gamma^\sq_k$ may be studied to classify  smooth foliations on 
$M$, similar to \cite{Haefliger-analytic}, 
 \cite{Mather-on-Haefliger} and  \cite{Thurston-classification}.\\

We start by the study of the gauge action of $\Diff^+(\R)$ on 
$C^\infty(\R)$-valued flat $1$-forms (cords) in 
Section~\ref{sec:flat-connections}. In Section~\ref{sec:groupoids} w
we introduce germ cords and quantum cords and 
their relevance in the study of codimension-one foliations.
Section~\ref{sec:holonomy} is devoted to the study of holonomy for 
leaf-like submanifold. In Section~\ref{sec:classification} we 
formulate and prove our classification theorems for germ and 
quantum cords and the corresponding classification of 
codimension-one foliations. Section~\ref{sec:cohomology} is a 
quick review of the relation between the cohomology theory for 
germ cords and quantum cords and the Bott cohomology of a foliation. 
Finally, Section~\ref{sec:general-case} states the main results 
of the previous  sections, which are formulated for foliations 
of codimension one, to the  case of general smooth framed 
foliations of arbitrary codimension.
 In \cite{AE-2}, we investigate complex cords and residues. The gauge 
 theoretic approach conveyed here takes us to a conjecture which is 
 both an attempt in fixing the Seifert conjecture~\cite{Seifert} 
 as well as a complex analogue to the 
 Novikov's compact leaf theorem~\cite{Novikov}.

%\newpage
\section{Flat connections and codimension-one foliations}
\label{sec:flat-connections}
\subsection{Diffeomorphisms of $\R$  as a gauge group}
Let ${\Diff^+(\R)}$  denote the group of orientation 
preserving diffeomorphisms of $\R$.
The Lie algebra  of ${\Diff^+(\R)}$ consists of 
smooth vector fields on $\R$ with the usual Lie bracket on 
vector fields, and is thus identified with ${C^\infty(\R)}$.  
Both $\Diff^+(\R)$ and $C^\infty(\R)$ are Fr\'echet spaces.
The Lie bracket on 
${C^\infty(\R)}$ is  given by
\begin{align*}
[\cdot,\cdot]:{C^\infty(\R)}\times{C^\infty(\R)}\ra 
{C^\infty(\R)},\ \ \ \ 
[\oA,\oB]:=\oA\oB'-\oA'\oB,\ \ \forall\ \oA,\oB\in{C^\infty(\R)}. 
\end{align*}  

%%%%%%%%%%%%%%%%%%%%%%%%
%%%%%%%%%%%%%%%%%%%%%%%%%

Let us assume that $M$ is a smooth manifold of dimension $n$.
We may consider the smooth $C^\infty(\R)$-valued differential forms
\[\Omega^*(M,C^\infty(\R))=\bigoplus_{k=0}^{n}
\Omega^k(M,C^\infty(\R))=\bigoplus_{k=0}^{n}
\Gamma\left(M,\wedge^k(M)\otimes_\R C^\infty(\R)\right).\]
We denote  elements of $\Omega^k(M,C^\infty(\R))$ by capital letters 
in sans serif font, i.e. $\oA$, $\oB$. The differential of $\oA$ 
with respect to its $\R$-variable  is denoted by 
$\oA'$ or $\partial_\h\oA$. Note that $\oA'$ is also a smooth 
differential form with values in $C^\infty(\R)$. The Lie bracket of 
$C^\infty(\R)$ induces a Lie bracket on 
$\Omega^*(M,C^\infty(\R))$, giving it the structure of a
{\emph{differential graded Lie algebras}}, or a DG Lie algebra 
for short. Note that for $\oA\in\Omega^k(M,C^\infty(\R))$ and 
$\oB\in\Omega^l(M,C^\infty(\R))$
\begin{align*}
d[\oA,\oB]&=((d\oA)\oB'-(d\oA')\oB)+(-1)^k(\oA(d\oB')-\oA'(d\oB))
=[d\oA,\oB]+(-1)^k[\oA,d\oB].
\end{align*}
A $1$-form $\oA\in\Omega^1(M,C^\infty(\R))$ is called a 
{\emph{cord}} if it satisfies the Maurer-Cartan equation 
\[d\oA+\frac{1}{2}[\oA,\oA]=d\oA-\oA'\oA=0.\] 
The {\emph{space of cords}} is denoted by $\CM(M,{C^\infty(\R)})$.\\

The adjoint action of ${\Diff^+(\R)}$ on ${C^\infty(\R)}$ is 
described as follows.  For each $\ob\in{\Diff^+(\R)}$, the 
function $\Adj_\ob:{\Diff^+(\R)}\ra{\Diff^+(\R)}$ defined by 
\[\Adj_\ob(\oa)=\ob^{-1}\circ\oa\circ\ob\ \ \ \ 
\forall\ \oa\in{\Diff^+(\R)}\]  fixes the identity. 
Thus, the differential of $\Adj_\ob$ gives a smooth map
$d\Adj_\ob:{C^\infty(\R)}\ra {C^\infty(\R)}$ and the adjoint action
\begin{align*}
\star:{\Diff^+(\R)}\times{C^\infty(\R)}\ra {C^\infty(\R)},
\ \ \ \ \ \ \ob\star \oA:=d\Adj_\ob(\oA).
\end{align*} 
\begin{prop}\label{prop:group-action-general}
The adjoint action of ${\Diff^+(\R)}$ on 
${C^\infty(\R)}$ is given by
\begin{align}\label{eq:group-action}
\ob\star \oA:=\frac{\oA\circ \ob}{\ob'}\ \ \ \ \ 
\forall\ \ob\in{\Diff^+(\R)},\ \oA\in{C^\infty(\R)}.
\end{align}
\end{prop}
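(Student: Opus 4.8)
The plan is to compute the differential $d\Adj_\ob$ at the identity directly, by differentiating the conjugation map along a curve of diffeomorphisms tangent to $\oA$. First I would fix $\ob\in\Diff^+(\R)$ and represent the tangent vector $\oA\in C^\infty(\R)$ as the initial velocity of a smooth path $\oa_t\in\Diff^+(\R)$ with $\oa_0=\mathrm{id}$ and $\partial_t\big|_{t=0}\oa_t=\oA$ (for instance the local flow generated by the vector field $\oA$). By the very definition of the velocity, any such path satisfies the pointwise expansion $\oa_t(x)=x+t\,\oA(x)+O(t^2)$ for each $x\in\R$. Since $\Adj_\ob$ fixes the identity, its differential applied to $\oA$ is
\[\ob\star\oA=d\Adj_\ob(\oA)=\partial_t\big|_{t=0}\big(\ob^{-1}\circ\oa_t\circ\ob\big).\]

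The heart of the computation is a pointwise chain-rule expansion. Evaluating at $x\in\R$ and abbreviating $y=\ob(x)$, I would write
\[(\ob^{-1}\circ\oa_t\circ\ob)(x)=\ob^{-1}\big(y+t\,\oA(y)+O(t^2)\big)=x+t\,\oA(y)\,(\ob^{-1})'(y)+O(t^2),\]
where the last equality uses the first-order Taylor expansion of $\ob^{-1}$ about $y$ together with $\ob^{-1}(y)=x$. Differentiating at $t=0$ then yields $(\ob\star\oA)(x)=\oA(\ob(x))\,(\ob^{-1})'(\ob(x))$. Finally I would invoke the inverse function theorem in the form $(\ob^{-1})'(\ob(x))=1/\ob'(x)$, which is legitimate because $\ob$ is orientation preserving and hence $\ob'>0$ everywhere, so that the quotient is well defined. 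Combining these gives
\[(\ob\star\oA)(x)=\frac{\oA(\ob(x))}{\ob'(x)},\]
which is precisely the formula \eqref{eq:group-action}.

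The step I expect to require the most care is the justification that the differential of $\Adj_\ob$, as a map between the Fr\'echet manifold $\Diff^+(\R)$ and the Fr\'echet space $C^\infty(\R)$, is actually computed by the elementary pointwise $t$-derivative above. This reduces to knowing that conjugation is smooth in the relevant (convenient/Fr\'echet) sense and that its derivative may be evaluated along curves, so that one is entitled to interchange $\partial_t\big|_{t=0}$ with evaluation at the point $x$. Once this interchange is granted, the remainder is classical one-variable calculus, and a brief remark recording the orientation convention $\ob'>0$ suffices to complete the argument.
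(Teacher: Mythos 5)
Your proposal is correct and follows essentially the same route as the paper: both represent $\oA$ as the initial velocity of a path $\oa_t$ through the identity, differentiate $\ob^{-1}\circ\oa_t\circ\ob$ at $t=0$ via the chain rule, and conclude with the inverse function theorem identity $(\ob^{-1})'\circ\ob=1/\ob'$. Your additional remark about justifying the pointwise computation of the Fr\'echet differential is a reasonable point of care that the paper passes over silently.
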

\begin{proof}
For $\oA\in {C^\infty(\R)}$, let us assume that $\oa_s$, for 
$s\in (-\epsilon,\epsilon)$, is a path in ${\Diff^+(\R)}$ with 
$\oa_0=Id_{\Diff^+(\R)}$ and $\partial_s\oa_s|_{s=0}=\oA$. The 
element $\ob\star \oA\in{C^\infty(\R)}$ is then given by
\begin{align*}
\partial_s\left(\ob^{-1}\circ \oa_s\circ \ob\right)_{s=0}
=\Big(\left((\ob^{-1})'\circ \oa_s\circ \ob\right)
\cdot \left(\partial_s\oa_s\right)\circ \ob\Big)_{s=0}
=\left((\ob^{-1})'\circ \ob\right)\cdot\left(\oA\circ\ob\right)
=\frac{\oA\circ \ob}{\ob'}.
\end{align*}
This completes the proof.
\end{proof}

Let us assume that $M$ is an oriented smooth manifold.   If 
$\ob\in\Omega^0(M,{\Diff^+(\R)})$ is a function on $M$ with values 
in ${\Diff^+(\R)}$ and $\oE\in\Omega^*(M,{C^\infty(\R)})$ is a 
${C^\infty(\R)}$-valued differential form on $M$, we can define 
$\oE\circ\ob\in\Omega^*(M,{C^\infty(\R)})$ by
\[(\oE\circ\ob)(\h,x):=\oE(\ob(\h,x),x)\ \ \ 
\forall\ \h\in \R,\ x\in M.\]
Note that 
\begin{align*}
(\oE\circ\ob)'=(\oE'\circ\ob)\ob'\ \ \ \ \text{and}\ \ \ 
d(\oE\circ\ob)=(d\oE)\circ\ob+(d\ob)(\oE'\circ\ob).
\end{align*}

\begin{prop}\label{prop:gauge-group}
The group $\Omega^0(M,\Diff^+(\R))$ acts on 
$\CM(M,{C^\infty(\R)})$  by
 \begin{align*}
&\star:\Omega^0(M,{\Diff^+(\R)})\times \CM(M,{C^\infty(\R)})
\ra \CM(M,{C^\infty(\R)}),\ \ \ \ \ \ \ \ 
\ob\star \oA:=\frac{\oA\circ \ob-d\ob}{\ob'}.
\end{align*}   
\end{prop}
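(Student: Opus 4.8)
The plan is to establish two things: that the formula for $\ob\star\oA$ produces an element of $\CM(M,C^\infty(\R))$ whenever $\oA$ is a cord (well-definedness), and that $\star$ satisfies the axioms of a group action. I would treat well-definedness first, since it is the substantive point.

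For well-definedness, set $\oC:=\ob\star\oA=u^{-1}P$ with $u:=\ob'>0$ and $P:=\oA\circ\ob-d\ob\in\Omega^1(M,C^\infty(\R))$, and verify the Maurer--Cartan equation in the form $d\oC-\oC'\oC=0$. First I would compute $dP$ and $\partial_\h P$ from the two displayed identities $(\oE\circ\ob)'=(\oE'\circ\ob)\ob'$ and $d(\oE\circ\ob)=(d\oE)\circ\ob+(d\ob)(\oE'\circ\ob)$ (with $\oE=\oA$), together with $d(d\ob)=0$ and the commutation of $\partial_\h$ with $d$; here the cord equation for $\oA$ enters through $(d\oA)\circ\ob=(\oA'\oA)\circ\ob=(\oA'\circ\ob)(\oA\circ\ob)$. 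Expanding $d\oC=-u^{-2}(du)P+u^{-1}dP$ and $\oC'\oC$ and taking the difference, the purely $du$-terms cancel, the term $PP$ drops out because $\oA\circ\ob$ and $d\ob$ are scalar-coefficient $1$-forms (so $(\oA\circ\ob)(\oA\circ\ob)=0$, $(d\ob)(d\ob)=0$, and the cross terms anticommute), and the remaining expression collapses to $u^{-1}\big[(d\ob)(\oA'\circ\ob)+(\oA'\circ\ob)(d\ob)\big]$, which is zero by the same anticommutativity. The conceptual reason behind this cancellation --- worth recording as a check --- is that a cord $\oA$ is exactly an integrable hyperplane field $\ker(d\h-\oA)$ on $M\times\R$, that the fibre diffeomorphism $\Phi_\ob(x,\h)=(x,\ob(\h,x))$ satisfies $\Phi_\ob^*(d\h-\oA)=\ob'\,(d\h-\ob\star\oA)$, and that pullback by a diffeomorphism preserves Frobenius integrability.

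For the action axioms I would verify $\mathrm{Id}\star\oA=\oA$ (immediate, since then $u=1$ and $d\ob=0$) and the composition law $\ob_1\star(\ob_2\star\oA)=(\ob_2\circ\ob_1)\star\oA$. The latter is a direct substitution: writing $\oB:=\ob_2\star\oA$ and $\ob_3:=\ob_2\circ\ob_1$, I would compute $\oB\circ\ob_1$ using the chain rules $\ob_3'=(\ob_2'\circ\ob_1)\ob_1'$ and $(d\ob_2)\circ\ob_1=d\ob_3-(d\ob_1)(\ob_2'\circ\ob_1)$; the inhomogeneous $-d\ob_1$ terms telescope and the $\ob_1'$ factors cancel, leaving $(\oA\circ\ob_3-d\ob_3)/\ob_3'$. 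This exhibits $\star$ as a (right) action, consistent with the adjoint action of Proposition~\ref{prop:group-action-general}.

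The main obstacle is the first computation: keeping track of signs and orders in the wedge and pointwise products so that the cancellations are genuine rather than apparent. I expect no conceptual difficulty beyond careful bookkeeping, the key structural inputs being $PP=0$ and the anticommutativity of scalar-coefficient $1$-forms, which is why I would keep the Frobenius picture in mind as an independent confirmation that the result must hold.
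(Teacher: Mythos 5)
Your proposal is correct and follows essentially the same route as the paper: a direct verification of the composition law $\oc\star(\ob\star\oA)=(\ob\circ\oc)\star\oA$ via the chain rules, together with an expansion showing that the Maurer--Cartan equation is preserved, with the cancellations resting on $PP=0$ and the anticommutativity of $C^\infty(\R)$-valued $1$-forms. The only cosmetic difference is that the paper first establishes the covariance identity $F_{\oY\star\oA}=(F_\oA\circ\oY)/\oY'$ for an arbitrary $1$-form $\oA$ and then specializes, whereas you substitute the cord equation midway; your Frobenius-integrability remark is a sound independent sanity check that the paper does not record here (though it uses the same picture later).
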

\begin{proof}
Note that the action 
defined above is clearly smooth and that
\begin{align*}
\oc\star\left(\ob\star\oA\right)
&=\frac{\oA\circ\ob\circ\oc-(d\ob)\circ\oc
-(\ob'\circ\oc)d\oc}{(\ob'\circ\oc)\oc'}
=\frac{\oA\circ\ob\circ\oc-d(\ob\circ\oc)}{(\ob\circ\oc)'}
=\left(\ob\circ\oc\right)\star\oA.
\end{align*}
Let us assume that $\oA\in\Omega^1(M,C^\infty(\R))$,
$\oB=\oY\star\oA$ and $F_\oA=d\oA+\oA\oA'$. We then have
\begin{align*}
F_\oB=d\oB+\oB\oB'&=\frac{(dA)\circ\oY-(\oA'\circ\oY)d\oY
-(d\oY')\oB}{\oY'}
-\frac{\left((\oA'\circ\oY)\oY'-d\oY'-\oY''\oB\right)\oB}{\oY'}\\
&=\frac{(dA)\circ\oY-(\oA'\circ\oY)d\oY-(\oA'\circ\oY)(\oA\circ\oY)+
(\oA'\circ\oY)d\oY}{\oY'}=\frac{F_\oA\circ \oY}{\oY'}.
\end{align*}
In particular, if $\oA\in\CM(M,C^\infty(\R))$ then 
$\oY\star\oA\in \CM(M,C^\infty(\R))$, which completes the proof. 
\end{proof}

\subsection{Classification of cords}
\label{subsec:geometric}
One would naturally lean to study the quotient of the space of cords 
on a smooth manifold $M$ under the gauge action of  
$\Omega^0(M,\Diff^+(\R))$. 
It is expected that this quotient is identified with the 
conjugacy classes of group homomorphisms from  $\pi_1(M)$ to 
$\Diff^+(\R)$. Nevertheless, this statement, which comes to mind 
from working with finite dimensional Lie groups, is in general 
not true for infinite dimensional Lie groups (and their Lie 
algebras). The main issue is that the quotient 
$\CM(U,{C^\infty(\R)})/\Omega^0(U,\Diff^+(\R))$
can be non-zero, and in fact highly non-trivial, for small 
contractible open subsets $U\subset \R^n$, i.e. the 
{\emph{Poincar\'e Lemma}} is not satisfied.\\

We may easily construct $\oA\in\CM(U,C^\infty(\R))$ 
so that $\oA$ is not gauge equivalent to zero, i.e. 
so that $\oA$ is not of the form $-d\oY/\oY'$, 
even when $U$ is an open subset of a $1$-dimensional manifold 
(and may thus be identified with $\R$). For instance, let us  set
\[\oA(\h,s)=(1+\h^2)ds,
\ \ \ \ \forall\ (\h,s)\in\R\times \R.\]
If $\oA=-d\oY/\oY'$, it follows that 
$(1+\h^2)\partial_\h\oY=\partial_s\oY$. If we write 
$\h=\tan(\theta)$ it follows that 
$\partial_\theta\oY+\partial_s\oY=0$. In particular, $\oY$ is 
constant on the images of the curves 
\[\gamma_s:(-\pi/2,\pi/2)\ra \R^2  \ \ \ 
\gamma_s(\theta)=(\tan(\theta),s+\theta).\]
Nevertheless, this means that $\oY(\h,s+\pi_2)$ is 
bounded above by $\oY(0,s)$, which is a contradiction.

\begin{defn}\label{defn:transversely-trivial}
A cord $\oA\in\CM(M,{C^\infty(\R)})$ is called 
{\emph{locally trivial}} if for every point $x\in M$ there is an 
open subset $U\subset M$ containing $x$ so that $\oA|_U$ is gauge
equivalent to zero. The space of all locally trivial cords 
is denoted by $\Cordoba$. 
\end{defn}

It is then clear that $\Omega^0(M,\Diff^+(\R))$ takes locally 
trivial cords to locally trivial cords, and we thus obtain an 
action of $\Omega^0(M,\Diff^+(\R))$ on $\Cordoba$. 
Let us assume that $f:M_1\ra M_2$ is a smooth map
between smooth manifolds.  
If $\oA$ is locally trivial, it follows that $f^*\oA$
is also locally trivial. This gives a natural 
pull-back map \[f^*:\Cordoban2\ra \Cordoban1.\] 
Let us denote $\Cordoba/\Gauge$ by $\Moduli$.
From the equality 
$f^*(\oY\star \oA)=(f^*\oY)\star (f^*\oA)$, it follows that 
$f^*$ induces a map 
\[f^*:\Modulin2\ra \Modulin1.\]

\begin{thm}\label{thm:classification-cords}
For every smooth manifold $M$, there is a 
natural bijection
\[\rho_{M}:\Moduli\lra 
\Hom(\pi_1(M),\Diff^+(\R))/\Diff^+(\R),\] 
where $\Diff^+(\R)$ acts on $\Hom(\pi_1(M),\Diff^+(\R))$ by 
conjugation. 
If $f:M_1\ra M_2$ is a smooth map between smooth 
manifolds, the following diagram is commutative:
\begin{align*}
\begin{diagram}
\Modulin2&\rTo{f^*}& 
\Modulin1\\
\dTo{\rho_{M_2}}&&\dTo_{\rho_{M_1}}\\
\Hom(\pi_1(M_2),\Diff^+(\R))/\Diff^+(\R)&\rTo{\ \ \ f^*\ \ \ }&
\Hom(\pi_1(M_1),\Diff^+(\R))/\Diff^+(\R)
\end{diagram}
\end{align*}
\end{thm}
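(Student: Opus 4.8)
The plan is to recognize a cord as a flat connection for the infinite-dimensional gauge group $\Diff^+(\R)$ and to run the classical holonomy/monodromy construction, with the hypothesis of local triviality playing the role that the Poincar\'e Lemma plays in the finite-dimensional theory (and which, as emphasized above, genuinely fails here). Throughout I assume $M$ connected and fix a basepoint, the disconnected case being handled componentwise. The engine is two elementary observations about the formula $\ob\star\oA=(\oA\circ\ob-d\ob)/\ob'$ of Proposition~\ref{prop:gauge-group}. First, the stabilizer of the zero cord: $\oc\star 0=-d\oc/\oc'=0$ holds exactly when $d\oc=0$, i.e.\ when $\oc$ is locally constant. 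Second, a patching statement: if two local gauges satisfy $\ob_U\star 0=\ob_V\star 0$ on an overlap, then applying the cocycle identity $\oc\star(\ob\star\oA)=(\ob\circ\oc)\star\oA$ from the proof of Proposition~\ref{prop:gauge-group} with $\oc=\ob_V^{-1}$ gives $(\ob_U\circ\ob_V^{-1})\star 0=0$, so the transition $\ob_U\circ\ob_V^{-1}$ is locally constant. Hence for a locally trivial cord $\oA\in\Cordoba$ the local trivializations $\ob_U$ (which exist by Definition~\ref{defn:transversely-trivial}) assemble into a flat principal $\Diff^+(\R)$-structure on $M$ with \emph{locally constant} transition cocycle.

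Next I would construct the map $\rho_M$ by passing to the universal cover $\pi\colon\wt{M}\to M$. The flat structure above, pulled back to the simply connected $\wt{M}$, admits a global gauge $\wt{\ob}\colon\wt{M}\to\Diff^+(\R)$ with $\pi^*\oA=\wt{\ob}\star 0$ (the local $\ob_U$ patch globally because their transitions are locally constant and $\wt{M}$ is simply connected). Since $\pi^*\oA$ is invariant under every deck transformation $\gamma$, naturality of the action gives $(\wt{\ob}\circ\gamma)\star 0=\gamma^*(\wt{\ob}\star 0)=\wt{\ob}\star 0$, whence by the stabilizer and patching lemmas $\wt{\ob}\circ\gamma=\rho(\gamma)\circ\wt{\ob}$ for a unique constant $\rho(\gamma)\in\Diff^+(\R)$. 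One checks $\rho\colon\pi_1(M)\to\Diff^+(\R)$ is a homomorphism (for the appropriate composition convention), well defined up to conjugacy by the choice of $\wt{\ob}$; replacing $\oA$ by a gauge-equivalent cord alters $\wt{\ob}$ only by composition with a global gauge and therefore conjugates $\rho$. This produces the well-defined $\rho_M\colon\Moduli\to\Hom(\pi_1(M),\Diff^+(\R))/\Diff^+(\R)$.

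For bijectivity I would argue both directions on the universal cover. \emph{Injectivity}: if two locally trivial cords $\oA_0,\oA_1$ have, after suitable trivializations, the same $\rho$, then $\wt{\oZ}:=\wt{\ob}_0^{-1}\circ\wt{\ob}_1$ satisfies $\wt{\oZ}\circ\gamma=\wt{\oZ}$ by the intertwining relations, so it descends to a gauge $\oZ$ on $M$; a short computation with the cocycle identity shows $\oZ\star\oA_0=\oA_1$, giving equality of gauge classes. \emph{Surjectivity}: given $\rho$, I must realize it by an honest global $C^\infty(\R)$-valued $1$-form, which requires an equivariant $\wt{\ob}\colon\wt{M}\to\Diff^+(\R)$, equivalently a global trivialization of the associated flat $\R$-bundle $E_\rho=\wt{M}\times_\rho\R$. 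Here enters the one genuinely infinite-dimensional input: $\Diff^+(\R)$ is convex as a subset of $C^\infty(\R)$, hence contractible, so every bundle over $M$ with fibre $\R$ and this structure group is trivial; a trivialization supplies $\wt{\ob}$, and descending $\wt{\ob}\star 0$ (deck-invariant by the reverse of the intertwining computation) yields a locally trivial cord with holonomy $\rho$. Finally, naturality: a smooth $f\colon M_1\to M_2$ lifts to $\wt{f}\colon\wt{M}_1\to\wt{M}_2$ equivariant over $f_*\colon\pi_1(M_1)\to\pi_1(M_2)$, and pulling back a trivializing $\wt{\ob}$ for $\oA$ identifies $\rho_{M_1}(f^*[\oA])$ with $\rho_{M_2}([\oA])\circ f_*$ up to conjugacy, so the square with $f^*$ on top and $f^*$ on holonomy commutes.

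The step I expect to be the main obstacle is not any single calculation but ensuring that the finite-dimensional holonomy machinery survives intact in the Fr\'echet setting: that local trivializations patch smoothly, that ``locally constant'' transitions yield a genuine smooth flat bundle, and above all that the surjectivity step has a global trivialization at its disposal. This last point is exactly where the contractibility (convexity) of $\Diff^+(\R)$ is essential and is the feature that lets cords be encoded as globally defined forms for \emph{every} $M$; the only other nontrivial analytic content, namely local solvability of the Maurer--Cartan equation, has been deliberately absorbed into the definition of $\Cordoba$, so the remainder of the argument is the formal monodromy correspondence applied with care to an infinite-dimensional structure group.
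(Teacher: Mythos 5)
Your argument is correct, and it reaches the same destination by a genuinely different route. The paper works entirely with \v{C}ech data: it extracts the locally constant transition cocycle $c_{\alpha\beta}=\oY_\alpha\circ\oY_\beta^{-1}$ from local trivializations, defines $\rho_M$ as the resulting class in $\HC^1(M,\Diff^+(\R))$, proves injectivity by assembling a global gauge from the coboundary relating two cohomologous cocycles, and proves surjectivity by averaging a given cocycle against a partition of unity, $\oY_\alpha(\h,x)=\sum_\gamma\lambda_\gamma(x)c_{\gamma\alpha}(\h)$, checking $\partial_\h\oY_\alpha>0$. The identification of $\HC^1(M,\Diff^+(\R))$ with $\Hom(\pi_1(M),\Diff^+(\R))/\Diff^+(\R)$ is then invoked as standard. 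You instead pass to the universal cover, glue the local gauges into a single equivariant $\wt{\ob}$, and read off the representation from the intertwining relation $\wt{\ob}\circ\gamma=\rho(\gamma)\circ\wt{\ob}$; this makes the $\pi_1$-representation explicit rather than mediated by \v{C}ech cohomology, and it is in fact the same monodromy construction the paper later develops in Section~4 for impotent cords. Both proofs rest on the identical two lemmas (the stabilizer of the zero cord consists of locally constant gauges; overlapping trivializations differ by locally constant transitions), and both surjectivity arguments are secretly the same: your appeal to contractibility of $\Diff^+(\R)$ should, to be honest in the Fr\'echet setting, be implemented exactly as the paper's convex partition-of-unity average --- convexity of $\Diff^+(\R)$ inside $C^\infty(\R)$ is what guarantees the averaged section has positive $\h$-derivative and is still surjective onto $\R$, hence a diffeomorphism. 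I would only ask you to make that one step concrete (a bare citation of ``bundles with contractible fibre admit sections'' is thinner than the situation deserves when the fibre is an infinite-dimensional group), and to note explicitly that the descended cord $\wt{\ob}\star 0$ is locally trivial on $M$ because local sections of the covering exhibit it as $\ob\star 0$ for local gauges $\ob$. With those two sentences added, your proof is complete and arguably more transparent about where $\pi_1(M)$ enters.
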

\begin{proof}
%The proof is relatively standard. 
Let us assume that $\oA\in\Cordoba$. $M$ may then be covered
by the open subsets $U_\alpha$ so that $\oA|_{U_\alpha}$ is 
gauge equivalent to zero. Choose 
$\oY_\alpha\in\Omega^0(U_\alpha,\Diff^+(\R))$ so that 
$\oA|_{U_\alpha}=\oY_\alpha\star 0$. Over 
$U_\alpha\cap U_\beta$ we find
\[\oA|_{U_\alpha\cap U_\beta}=\oY_\alpha\star 0=\oY_\beta\star 0
\ \ \ \Rightarrow \ \ \ (\oY_\alpha\circ\oY_\beta^{-1})\star 0=0. \]
This means that $c_{\alpha\beta}=\oY_\alpha\circ\oY_\beta^{-1}
:U_\alpha\cap U_\beta\ra \Diff^+(\R)$ is locally constant.
These maps satisfy the cocycle condition and give a cohomology class
in the \v{C}ech cohomology  $\HC^1(M,\Diff^+(\R))$.
The functions $\oY_\alpha$ are well-defined only upto composition
with locally constant functions. If 
$\oZ_\alpha=d_\alpha\circ \oY_\alpha$, where 
$d_\alpha:U_\alpha\ra \Diff^+(\R)$ is locally constant,  
the \v{C}ech cocycle $c'_{\beta\alpha}$ associated with 
$\{\oZ_\alpha\}$ would be given by
\[c'_{\alpha\beta}=(d_\alpha\circ \oY_\alpha)\circ 
(d_\beta\circ \oY_\beta)^{-1}
=d_\alpha\circ c_{\alpha\beta} \circ d_\beta^{-1}
:U_\alpha\cap U_\beta\ra \Diff^+(\R).\]
This means that the cohomology classes represented by 
$\{c_{\alpha\beta}\}$ and $\{c'_{\alpha\beta}\}$ are the same.
Moreover, if we gauge $\oA$ by $\oY\in\Omega^0(M,\Diff^+(\R))$, the 
cocycles associated with $\oA$ and $\oY\star \oA$ are the same 
and we obtain a well-defined map
\begin{align*}
\rho=\rho_{M}:\Moduli\ra 
\HC^1(M,\Diff^+(\R))\simeq \Hom(\pi_1(M),\Diff^+(\R))/\Diff^+(\R).
\end{align*}
If $\rho(\oA)=\rho(\oB)$ it follows that the corresponding cocycles
$\{c^\oA_{\alpha\beta}\}$ and $\{c^\oB_{\alpha\beta}\}$
are related by the locally constant functions 
$d_\alpha:U_\alpha\ra \Diff^+(\R)$, after we pass to refinement 
of the coverings associated with $\oA$ and $\oB$. In particular,
\begin{align*}
d_\alpha\circ c_{\alpha\beta}^\oA=c_{\alpha\beta}^\oB\circ d_\beta
\ \ \ \Rightarrow \ \ \ 
d_\alpha\circ \oY_\alpha\circ\oY_\beta^{-1}=
\oZ_\alpha\circ\oZ_\beta^{-1}\circ d_\beta,
\end{align*}
where $\oZ_\alpha\star 0=\oB|_{U_\alpha}$. It follows from the above 
equality that $\oZ_\beta^{-1}\circ d_\beta\circ \oY_\beta=
\oZ_\alpha^{-1}\circ d_\alpha\circ \oY_\alpha$ over 
$U_\alpha\cap U_\beta$, and we can define
$\oX\in\Omega^0(M,\Diff^+(\R))$ by 
$\oX|_{U_\alpha}=\oZ_\alpha^{-1}\circ d_\alpha\circ \oY_\alpha$.
Over $U_\alpha$ we can then compute
\begin{align*}
\oX\star \oB|_{U_\alpha}=
\left(\oZ_\alpha^{-1}\circ d_\alpha\circ \oY_\alpha\right)\star
\left(\oZ_\alpha\star 0\right)=\oY_\alpha\star d_\alpha\star 0
=\oA|_{U_\alpha}.
\end{align*}
Thus, $\oA=\oX\star \oB$ and the two cords are gauge equivalent.
It follows from here that over simply connected manifolds every 
locally trivial cord is gauge equivalent to zero.\\

To finish the proof, we need to show that the map 
$\rho=\rho_{M}$ is surjective.
Let us assume that 
$\{c_{\alpha\beta}:U_\alpha\cap U_\beta\ra \Diff^+(\R)\}$ 
is a cocycle in $\HC^1(M,\Diff^+(\R))$. Choose a smooth partition 
of unity $\{\lambda_\alpha:U_\alpha\ra \R^{\geq 0}\}_\alpha$
subordinate to the cover $\{U_\alpha\}_\alpha$ of $M$ and  define
\begin{align*}
\oY_\alpha:U_\alpha\ra \Diff^+(\R),\ \ \ \oY_\alpha(\h,x)&=
\sum_{\gamma} \lambda_\gamma(x) c_{\gamma\alpha}(\h).
\end{align*}
Note that $\partial_t\oY_\alpha>0$, and $\oY_\alpha(\cdot,x)$ 
is thus a diffeomorphism for all $x\in U_\alpha$. 
Over the intersections $U_\alpha\cap U_\beta$ we have
$\oY_\alpha\circ c_{\alpha\beta}=\oY_\beta$.
If we define $\oZ_\alpha:U_\alpha\ra \Diff^+(\R)$ by 
$\oZ(x,\cdot)=\oY(x,\cdot)^{-1}$, we find
$\oZ_\beta=c_{\beta\alpha}\circ \oZ_\alpha$ and thus
$\oZ_\beta\star 0=\oZ_\beta\star c_{\alpha\beta}\star 0=
(c_{\alpha\beta}\circ \oZ_\beta)\star 0=\oZ_\alpha\star 0$.
In particular, the cords $\oZ_\alpha\star 0\in
\CMTT(U_\alpha, C^\infty(\R))$ may be glued together to give 
$\oA\in \Cordoba$. It is also clear that 
$\rho(\oA)=[\{c_{\alpha\beta}\}]\in \HC^1(M,\Diff^+(\R))$.
\end{proof}

%\newpage
%\section{The moduli space of cords}\label{sec:cords}
\section{Lie groupoids, Lie algebroids and gauge theory}
\label{sec:groupoids}
\subsection{Lie groups and Lie algebroids}
A {\emph{topological groupoid}} is a small topological category 
$\Groupoid$ such that all arrows are invertible. This means that 
the sets $\Obj(\Groupoid)$ and $\Mor(\Groupoid)$ of objects 
and morphisms of $\Groupoid$ are topological spaces and there 
are maps 
\[s,t:\Mor(\Groupoid)\ra \Obj(\Groupoid),\ \ \ e:\Obj(\Groupoid)\ra 
\Mor(\Groupoid)\]
which assign the source $x=s(\phi)$ and the target $y=t(\phi)$ 
to a morphism $\phi\in \Mor(x,y)$, and the identity map 
$e(x)\in\Mor(x,x)$ for $x,y\in\Obj(\Groupoid)$.  These maps 
are required to be continuous and have 
the appropriate properties, e.g. that 
$\phi\circ e(x)=\phi=e(y)\circ \phi$ for 
every $\phi\in\Mor(x,y)$. Moreover, we require that the arrows 
are invertible, meaning that for every $\phi\in\Mor(x,y)$ there 
is a unique morphism $\phi^{-1}\in\Mor(y,x)$ such that 
$\phi\circ\phi^{-1}=e(y)$ and $\phi^{-1}\circ\phi=e(x)$. 
Let us review some important examples which play  important role 
in this paper.\\

{\bf{The covering groupoid.}} Let 
$\Ucal=\{U_\alpha\}_{\alpha\in I}$ be a 
covering of a smooth manifold $M$ with open subsets.
 We can then define a groupoid $\Gamma_\Ucal$ by setting 
\[\Obj(\Gamma_\Ucal)=\sqcup_{\alpha\in I}U_\alpha,\ \ \ 
\Mor(\Gamma_\Ucal)=\{(x,\alpha,\beta)\ |\ \alpha,\beta\in I, 
x\in U_\alpha\cap U_\beta\}.\]
The morphism $(x,\alpha,\beta)$ is usually denoted by 
$(x\in U_\beta)\xrightarrow{(x,\alpha,\beta)} (x\in U_\alpha)$.
The source map sends $(x,\alpha,\beta)$ to $(x\in U_\beta)$ and the 
target map sends it to $(x\in U_\alpha)$. The composition is defined 
by setting $(x,\alpha,\beta)\star(x,\beta,\gamma)=(x,\alpha,\gamma)$.
This gives a groupoid which is sometimes denoted by $\Gamma_\Ucal$ 
in the literature.\\

{\bf{Germs of diffeomorphisms.}} Let us assume that $R$ 
is a smooth oriented manifold. Let $\Diff^+(R)$ denote the space 
of orientation preserving diffeomorphisms of $R$. We can define 
the groupoid of germs of local diffeomorphisms of $R$, denoted 
by $\Gamma^{\mathsf{g}}_R$, by setting
$\Obj(\Gamma^{\mathsf{g}}_R)=R$ and 
\begin{align*}
\Mor(\Gamma^{\mathsf{g}}_R)=\bigsqcup_{x,y\in R}\Morg(x,y)\ \ \ \  
\text{where}\ \ \ &\Morg(x,y)=\{(f,x,y)\ 
|\ f\in \Diff^+(R), f(x)=y\}/\simg
\end{align*}
and $(f,x,y)\simg (g,x,y)$ if there is an open neighborhood $U$ 
of $x$ in $R$ so that $f|_U=g|_U$. The equivalence class of 
$(f,x,y)$ is denoted by $[f,x,y]_{\sg}$ or $[f,x]_{\sg}$ 
(note that $y=f(x)$ is determined by $f$ and $x$).
The source map and the target map are then defined
by $s[f,x,y]_{\sg}=x$ and $t[f,x,y]_{\sg}=y$, while the map 
$e$ is defined by $e(x)=[Id_M,x,x]_{\sg}\in\Morg(x,x)$. 
The topology on $\Gamma^{\mathsf{g}}_R$ 
is defined so that a basis of neighborhoods for $[f,x]_{\sg}$ 
is given by $\{[f,y]_{\sg}\ |\ y\in U\}$, where $U$ is an open 
set in $R$ which contains $x$. Note that the topology induced 
on $\Morg(x,x)$ is the discrete topology. With this topology 
$\Mor(\Gamma^{\mathsf{g}}_R)$ has the structure of a manifold 
which is equipped with the covering maps $s$ and $t$ to $R$.\\

{\bf{Quantum groupoid of diffeomorphisms.}} Let us continue 
to assume that $R$ is a smooth oriented manifold.  We can define 
the groupoid of quantum  diffeomorphisms of $R$, denoted by 
$\Gamma^{\mathsf{q}}_R$, by setting
$\Obj(\Gamma^{\mathsf{q}}_R)=R$ and 
\begin{align*}
\Mor(\Gamma^{\mathsf{q}}_R)=\bigsqcup_{x,y\in R}\Morq(x,y)\ \ 
\text{where}\ \ 
\Morq(x,y)=\{(f,x,y)\ |\ f\in \Diff^+(R), f(x)=y\}/\simq
\end{align*}
and $(f,x,y)\simq (g,x,y)$ if the Taylor expansions of $f$ and $g$ 
agree at $x$. The equivalence class of $(f,x,y)$ is denoted by 
$[f,x,y]_{\sq}$ or $[f,x]_{\sq}$.
The source map and the target map are defined
by $s[f,x,y]_\sq=x$ and $t[f,x,y]_\sq=y$, while the map $e$ 
is defined by 
$e(x)=[Id_M,x,x]_\sq\in\Morq(x,x)$. The topology on 
$\Gamma^{\mathsf{q}}_R$ is defined 
so that a basis of neighborhoods for $[f,x]_\sq$ is given by 
$\{[f,y]_{\sq}\ |\ y\in U\}$, where $U$ is an open set in $R$ 
which contains $x$. As before, note that the topology induced 
on $\Morq(x,x)$ is the discrete topology. Unlike 
$\Gamma^{\mathsf{g}}_R$, the groupoid $\Gamma^{\mathsf{q}}_R$ 
is not Haussdorff, as different germs may have the same Taylor
 expansion.\\

The particular cases where the manifold $R$ is the real line 
$\R$ is of particular interest. In this case, we write 
$\Gamma^\sg=\Gamma^\sg_\R$ and
$\Gamma^\sq=\Gamma^\sq_\R$. We write $\Gamma^\sg_k$ for 
$\Gamma^\sg_{\R^k}$ and $\Gamma^\sq_k$ for $\Gamma^\sq_{\R^k}$.
There is a well-defined 
{\emph{Taylor expansion functor}} 
$\quot:\Gamma^\sg_k\ra \Gamma^\sq_k$ which is the identity map 
over the objects and takes $[f,x]_\sg$ to $[f,x]_\sq$. This 
functor is a homomorphism of groupoids.\\

The topology on $\Gamma^\sg$ and $\Gamma^\sq$ is not so pleasant
if one would like to treat them as Lie groupoids. The arrows of 
the groupoid $\Gamma^\sq$, which are given by the formal power 
series $\sum_{m=0}^\infty y_m(\h-x)^m$  may naturally be 
identified with the points 
\[(x,y_0,y_1,y_2,\ldots)\in\R\times\R\times\R^+\times \R^\infty.\]
This gives a natural candidate for equipping $\Mor(\Gamma^\sq)$ 
with a topology and arriving at a Lie groupoid, i.e. 
a topological groupoid with the 
structure of a manifold on the spaces of objects and morphisms.
 We denote this Lie groupoid by 
$\Groupoidq$. Note that $\Groupoidq$ and $\Gamma^\sq$ are the same 
as groupoids, but not as topological groupoids. 
The Lie algebroid associated with $\Groupoidq$ is the 
vector bundle $\Algebroidq$ over $\R$ which is given by 
\[\Algebroidq_{s}=\left\{\sum_{m=0}^\infty a_m(\h-s)^m\ \big|\ 
a_m\in\R, m=0,1,2,\ldots\right\}\ \ \ \ \ \ \forall\ s\in\R.\] 
A section $\qA=\sum_{m=0}^\infty a_m(s)(\h-s)^m$ 
of the vector bundle $\Algebroidq$ is determined by the 
smooth functions $a_m:\R\ra \R$ for $m\in\Z^{\geq 0}$. \\

Equipping  $\Gamma^\sg$ with a more appropriate topology is 
difficult. In fact, it is a common belief that it is not possible 
to construct a "{\emph{good}}" non-discrete topology 
on $\Morg(x,x)$. For instance, Gromov \cite{Gromov-book} writes: 
"{\emph{there is no useful topology in this space ... of germs of 
$[C^k]$ sections...}}".  Instead of choosing a topology on 
$\Gamma^\sg$, it is enough for to define the notion of a smooth 
function with values in $\Gamma^\sg$. 
A map $\oA:M\ra \Gamma^\sg$ from a smooth manifold to 
$\Gamma^\sg$  is called {\emph{smooth}} if for every $x\in M$ 
there is an open set $U_x\subset M$ containing $x$, an 
open interval $I\subset \R$ and a smooth function 
$\ov\oA:I\times U_x\ra \R$ 
such that $\oA(y)$ is represented by $\ov\oA(\cdot,y):\R\ra\R$ 
for every $y\in U_x$. We write $\Groupoidg$ for $\Gamma^\sg$ if this 
weak notion of topology is used. It then makes sense to talk 
about the tangent bundles $T\Groupoidg$ of the space of arrows.  
The Lie algebroid $\Algebroidg$ may thus be defined where for 
$s\in\R$, the vector space $\Algebroidg_{s}$ consists of germs 
of smooth real-valued functions at $s$. 
The derivatives of $\oA$ and $\qA$ with respect to the variable $\h$ 
are denoted $\oA'$ and $\qA'$, for simplicity.\\

The homomorphism $\quot:\Gamma^\sg\ra \Gamma^\sq$ may also 
be regarded as a homomorphism $\quot:\Groupoidg\ra\Groupoidq$ 
and induces a homomorphism of vector bundles 
$\quot:\Algebroidg\ra\Algebroidq$. 

\subsection{Germ cords, quantum cords and the gauge actions}
Let us assume that $M$ is a smooth manifold. A germ $k$-form on 
$M$ is a smooth $k$-form $\oA$ on $M$ with values in $\Algebroidg$.
 The smooth map 
$s\circ\oA$ then induces a well-defined smooth map 
$s_\oA:M\ra \R$, which is called the {\emph{source}} of $\oA$.
The  space of all germ $k$-forms is denoted by 
$\Omega^k(M,\Algebroidg)$.
Similarly, we can define $\Omega^k(M,\Algebroidq)$.
The source map defines the source maps
\[s:\Omega^*(M,\Algebroidq)\ra C^\infty(M)\ \ \ \text{and}\ \ \ 
s:\Omega^*(M,\Algebroidg)\ra C^\infty(M),\]
while the target map induces  the maps
\[t:\Omega^*(M,\Algebroidq)\ra \Omega^*(M,\R)\ \ \ \text{and}\ \ \ 
t:\Omega^*(M,\Algebroidg)\ra \Omega^*(M,\R).\]
Denoting both source maps by $s$  and both target maps by $t$ 
is of course an abuse of notation,
which will be repeated in many similar situations in this paper. The 
homomorphisms $\quot:\Gamma^\sg\ra\Gamma^\sq$ induces 
a {\emph{Taylor expansions}} map
\[\quot:\Omega^*(M,\Algebroidg)\ra \Omega^*(M,\Algebroidq).\]

The Lie brackets on the fibers of $\Algebroidg$ and $\Algebroidq$ 
(which is induced by the Lie bracket of these algebroids) induces  
Lie brackets on $\Omega^*(M,\Algebroidg)$ and 
$\Omega^*(M,\Algebroidq)$, and are defined only when the source 
maps match. The Lie bracket is given by 
\begin{align*}
&[\oA,\oB]:=\oA\oB'-\oB\oA'\ \ \ \ \ 
%+\rho(\oA)\cdot\oB-(-1)^{kl}\rho(\oB)\cdot\oA\\&
\ \ \ \forall\ \oA\in\Omega^k(M,\Algebroidg),
\oB\in\Omega^l(M,\Algebroidg)
\ \ \text{s.t.}\ \ s_\oA=s_\oB:M\ra \R.
\end{align*}
A formula for the  Lie bracket of $\Omega^*(M,\Algebroidq)$ is 
given similarly.

\begin{defn} A {\emph{germ cord}} is a $1$-form 
$\oA\in\Omega^1(M,\Algebroidg)$ which satisfies 
\[d\oA+\frac{1}{2}[\oA,\oA]=d\oA+\oA\oA'=0.\]  
The space of germ cords  on $M$ is denoted by $\CM(M,\Algebroidg)$. 
\end{defn}

 The gauge action of $\Groupoidg$  over 
$\CM(M,\Algebroidg)$ is defined 
as follows. We define 
\begin{equation}\label{eq:gauge-action}
\oY\star \oA:=\frac{\oA\circ\oY-d\oY}{\oY'}\ \ \ \ \ 
\forall\ \ \oA\in\CM(M,\Algebroidg),\ 
\oY\in\Omega^0(M,\Groupoidg).
\end{equation}
As usual, it is understood from the definition that 
$\oY\star \oA$ makes sense 
only if $t_\oY=s_\oA$ as smooth functions from $M$ to $\R$. Setting 
$\oB=\oY\star\oA$ and $F_\oA=d\oA+\oA\oA'$, we have
$F_\oB=(F_\oA\circ \oY)/\oY'$.
In particular, if $\oA\in\CM(M,\Algebroidg)$ then 
$\oY\star\oA\in\CM(M,\Algebroidg)$.
 If $\oY,\oZ\in\Omega^0(M,\Groupoidg)$ 
are gauge maps with $t_\oZ=s_\oY$, we also compute
$\oZ\star\left(\oY\star\oA\right)
=\left(\oY\circ\oZ\right)\star\oA$.
These observations show that $\star$ defines an action of 
$\Omega^0(M,\Groupoidg)$ on $\Omega^1(M,\Algebroidg)$ which 
induces and action on $\CM(M,\Algebroidg)$,
called the  {\emph{gauge action}} of $\Groupoidg$ on 
$\CM(M,\Algebroidg)$. The gauge 
action of $\Groupoidq$ on $\Omega^1(M,\Algebroidq)$ is 
defined in a similar 
way, as Equation~\ref{eq:gauge-action} makes 
sense for the formal power series. If 
$\qA=\sum_{m=0}^\infty a_m(\h-s)^m$ then 
\begin{align*}
&F_\qA=\sum_{m=0}^\infty\left(da_m+(m+1)a_{m+1}ds\right)(\h-s)^m
-\sum_{p,q}(p-q)a_pa_q(\h-s)^{p+q-1}\\
\Rightarrow\ \ \ \ \ \ &F_\qA=0\ \ \iff\ \ 
da_m+(m+1)a_{m+1}ds=\sum_{p+q=m+1}(p-q)a_pa_q\ \ \ m=0,1,\cdots
\end{align*}

\begin{defn} 
A {\emph{quantum cord}} is a $1$-form 
$\qA\in\Omega^1(M,\Algebroidg)$ which  satisfies 
$F_\qA=d\qA+\qA\qA'=0$ and is locally trivial, meaning that
for every point $x\in M$ there is an open set $U_x\subset M$
containing $x$ so that $\qA|_{U_x}$ is of the form 
$\qY\star 0$ for some $\qY\in\Omega^0(U_x,\Groupoidq)$. The space 
of quantum cords on $M$ is denoted by $\CM(M,\Algebroidq)$. 
\end{defn} 
We will soon see the reason for the extra local triviality 
condition in the case of quantum cords. The action of 
$\Omega^0(M,\Groupoidq)$ on $\Omega^1(M,\Algebroidq)$ 
induces a gauge action of $\Groupoidq$ on $\CM(M,\Algebroidq)$.
Note that $\quot$ induces a well-defined homomorphism 
$\quot:\Omega^*(M,\Algebroidg)\ra \Omega^*(M,\Algebroidq)$ 
which restricts to 
$\quot:\CM(M,\Algebroidg)\ra \CM(M,\Algebroidq)$. This follows 
from Lemma~\ref{lem:Poincare-Lemma}, which will be proved in 
Section~\ref{sec:classification}.

\subsection{Foliations of codimension one}
Let us denote the fibers of $\Algebroidg$ and $\Algebroidq$ over
$0\in\R$ by $\algebroidg$ and $\algebroidq$.
Denote the group of local diffeomorphisms of $(\R,0)$, 
which consists of the arrows in $\Groupoidg$ with source 
and target equal to $0\in\R$,
by $\Groupg$. Similarly, let $\Groupq$ denote the group of 
power series
$\sum_{m=1}^\infty a_m\h^m$ with $a_1>0$, which consists of the 
arrows in $\Groupoidq$ with source and target equal to $0$. 
It is then clear that $\Groupg$ and $\Groupq$ are both groups.
From the gauge action of the groupoids $\Groupoidg$ and $\Groupoidq$
on $\CM(M,\Algebroidg)$ and $\CM(M,\Algebroidq)$ we obtain 
the gauge actions of  $\Groupg$ and $\Groupq$ on 
\[\CM(M,\algebroidg)\subset\Omega^1(M,\algebroidg)\ \ \ \text{and}
\ \ \ \CM(M,\algebroidq)\subset\Omega^1(M,\algebroidq),\]
respectively.
The target maps 
\[t:\Omega^1(M,\Algebroidg)\ra \Omega^1(M,\R)
\ \ \ \text{and}\ \ \ t:\Omega^1(M,\Algebroidq)\ra \Omega^1(M,\R)\]
 may be used to associate 
a $1$-form $a_0$ to every germ cord $\oA\in \CM(M,\algebroidg)$, 
or its quantized image $\qA=\quot(\oA)$. In fact, 
$\qA=\sum_{m=0}^\infty a_m\h^m$, and 
$a_0$ is the initial term in this Taylor expansion. It follows 
that $da_0=a_1\wedge a_0$. If we further assume that $a_0$ is 
nowhere zero, it follows that $a_0$ determines a smooth 
transversely oriented codimension-one foliation on $M$. Let us 
denote the subspaces of $\algebroidg$ and $\algebroidq$ 
which consists of the elements which are positive at the origin by 
$\algebroidg^*$ and $\algebroidq^*$, respectively. 
Correspondingly,  the subsets 
\[\CM(M,{\algebroidg^*})\subset \CM(M,\algebroidg)\ \ \ 
\text{and}\ \ \ \CM(M,{\algebroidq^*})\subset \CM(M,\algebroidq)\] 
of  {\emph{non-singular}} germ cords and quantum cords 
may be defined. 
The above discussion implies that there are projection maps
\begin{align*}
\proj_{\mathsf{g}}:\CM(M,{\algebroidg^*})\ra \Fcal(M)
\ \ \ \text{and}
\ \ \ \proj_{\mathsf{q}}:\CM(M,{\algebroidq^*})\ra \Fcal(M),
\end{align*}
where $\Fcal(M)$ denotes the space of smooth transversely 
oriented  codimension-one foliations on $M$.  
We abuse the notation and denote $\proj_{\mathsf{g}}$ 
and $\proj_{\mathsf{q}}$ by $\proj$.\\

Let us assume that $\Fol\in\Fcal(M)$ 
 is given by a $1$-form $a_0\in\Omega^1(M,\R)$ 
such that $da_0=a_1\wedge a_0$, for another $1$-form 
$a_1\in\Omega^1(M)$. There is a vector field 
$V$ transverse to $\Fol$ which satisfies $a_0(V)=-1$. 
By subtracting a suitable multiple of $a_0$ from $a_1$, we may 
further assume that $a_1(V)=0$. 
The vector field $V$ may be integrated to give 
the flow $\Phi_\h=\Phi^V_\h$ on $M$. For every $x\in M$ we can 
then define
$\oA=\oA_{a_0,V}\in\Omega^1(M,C^\infty(\R))$ by 
\begin{align*}
\oA(\h,x):=\Phi_\h^*(a_0)(x)\ \ \ \ \forall\ x\in M,\ \h\in\R.
\end{align*}
The $1$-form $\oA$ may  be considered as an element in 
$\Omega^1(M,\algebroidg)$. 
The Taylor expansion of $\oA$ is given as follows. Let $L$ 
denote the Lie derivative corresponding to $V$ and define 
\[\qA=e^{\h L}a_0=\sum_{n=0}^{\infty}\frac{L^n(a_0)\h^n}{n!}
=\sum_{n=0}^\infty a_n\h^n.\]
\begin{lem}
Having fixed the above notation,  $\oA_{a_0,V}$ is a cord and 
its image in $\Omega^1(M,\algebroidq)$ is a germ cord, while 
 $\qA_{a_0,V}=\quot(\oA)$ is quantum cord.
\end{lem}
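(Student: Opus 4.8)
The plan is to reduce the whole statement to the single curvature identity $F_\oA=0$ for $\oA=\oA_{a_0,V}$, obtained from a Lie-derivative computation, and then to transport this identity along the germ map and the Taylor-expansion map $\quot$, so that the only genuinely new content is the local triviality required of a quantum cord.

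First I would record the two ``derivatives'' of $\oA$. Using $\partial_\h\circ\Phi_\h^*=\Phi_\h^*\circ L$ with $L=L_V$ the Lie derivative, one gets $\oA'=\partial_\h\oA=\Phi_\h^*(L a_0)$, and since $\Phi_\h^*$ commutes with the exterior differential, $d\oA=\Phi_\h^*(da_0)=\Phi_\h^*(a_1\wedge a_0)$. Cartan's formula together with the normalizations $a_0(V)=-1$ and $a_1(V)=0$ gives
\[
L a_0=d\big(\iota_V a_0\big)+\iota_V(da_0)=\iota_V(a_1\wedge a_0)=a_1(V)\,a_0-a_0(V)\,a_1=a_1,
\]
so $\oA'=\Phi_\h^*(a_1)$ and hence $d\oA=\Phi_\h^*(a_1)\wedge\Phi_\h^*(a_0)=\oA'\wedge\oA=-\oA\,\oA'$. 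Therefore $F_\oA=d\oA+\oA\,\oA'=0$, which is exactly the Maurer--Cartan equation, and $\oA$ is a cord in $\CM(M,C^\infty(\R))$. Passing to germs at $0\in\R$ is an algebra map $C^\infty(\R)\to\algebroidg$ compatible with $d$ and $\partial_\h$, so it carries the identity $F_\oA=0$ to the germ-cord equation; thus the image of $\oA$ in $\Omega^1(M,\algebroidg)$ is a germ cord. Likewise $\quot$ commutes with $d$, with products, and with $\partial_\h$, so $F_\qA=\quot(F_\oA)=0$ for $\qA=\quot(\oA)=\sum_n \tfrac{1}{n!}L^n(a_0)\,\h^n$; this is the flatness half of the quantum-cord condition.

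The substantive remaining point is the local triviality demanded of a quantum cord, and I would establish it first at the level of $\oA$ and then apply $\quot$. Viewing $\oA$ as a genuine $1$-form on $\R\times M$ with no $d\h$-component, set $\omega=d\h-\oA$. A direct expansion, using the identity $d\oA=\oA'\wedge\oA$ on $M$ (which is equivalent to $F_\oA=0$), shows that the $d\h$-terms cancel and one is left with $\omega\wedge d\omega=\oA\wedge\oA'\wedge\oA=0$, so $\omega$ is Frobenius-integrable. A local first integral $\oY(\h,x)$, normalized so that $\partial_\h\oY>0$, then satisfies $d\oY=\partial_\h\oY\,(d\h-\oA)$, which is precisely $\oA=-d\oY/\oY'=\oY\star 0$ on a neighbourhood; hence $\oA$ is locally gauge-trivial, and applying $\quot$ yields $\qA=\quot(\oY)\star 0$ on the same neighbourhood, which is the required local triviality, so $\qA\in\CM(M,\Algebroidq)$. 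I expect this last step to be the main obstacle: the vanishing $\omega\wedge d\omega=0$ is exactly what makes the overdetermined system $\oY\star 0=\oA$ solvable, and it is an instance of the Poincar\'e Lemma (Lemma~\ref{lem:Poincare-Lemma}) specialised to a flat cord; if one prefers, the membership $\qA\in\CM(M,\Algebroidq)$ may instead be quoted directly from that lemma through the restriction $\quot:\CM(M,\Algebroidg)\to\CM(M,\Algebroidq)$.
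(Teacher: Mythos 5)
Your proof is correct and follows essentially the same route as the paper: the Cartan-formula computation $L(a_0)=a_1$, the identity $\oA'=\Phi_\h^*(a_1)$, and the conclusion $F_\oA=\Phi_\h^*(da_0+a_0a_1)=0$ are exactly the paper's argument. The only difference is that for the local triviality of $\qA$ the paper simply cites Lemma~\ref{lem:locally-trivial}, whereas you spell out the Frobenius integrability of $d\h-\oA$ explicitly, which is in substance the content of Lemma~\ref{lem:Poincare-Lemma}.
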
 
\begin{proof}
First note that
\begin{align*}
L(a_0)=d(\imath_V(a_0))+\imath_V(d(a_0))
=\imath_V(a_1a_0)
=a_1(V)a_0-a_0(V)a_1=a_1.
\end{align*}
We then observe that the derivative $\oA'$ of $\oA$ is given by
\[\oA'=\frac{d}{d\h}\Phi_\h^*(a_0)=\Phi_\h^*(L(a_0))=\Phi_\h^*(a_1).\]
It follows that
$d\oA+\oA\oA'=\Phi_\h^*(da_0+a_0a_1)=0$. 
It follows from Lemma~\ref{lem:locally-trivial} that 
$\qA=\quot(\oA)$ is a quantum cord.
\end{proof}
Note that $\imath_V\oA_{a_0,V}=-1$. The cord $\oA=\oA_{a_0,V}$ 
is uniquely specified by the following two conditions
\begin{itemize}
\item $\oA(0,x)=a_0(x)$ for every $x\in M$.
\item We have $\imath_V(\oA)=-1$.
\end{itemize}
We call $a_0=\oA(0,\cdot)$ the initial term of $\oA$.
More generally, let us assume that $\oa$ 
is a  $C^\infty(\R)$-valued function on $M$ with 
negative initial term. Consider the equation
\begin{align}\label{eq:recursion-A}
&L(\oB)+[\oB,\oa]+d\oa=0.
\end{align}
This is a differential equation for $\oB$ which defines 
$\oB$ (in a neighborhood of the origin in $\R$) 
once the initial term of $\oB$ is fixed.
In particular, if we set $\oB(0,x)=\oa(0,x)\oA(0,x)$ 
and solve for $\oB$, we obtain the $1$-form 
$\oB=\oA_{\Fol,V,\oa}\in\Omega^{1}(M,\algebroidg)$.
\begin{prop}\label{prop:MC-elements}
Fix the transversely oriented codimension-$1$ foliation $\Fol$  
and the transverse vector field $V$. The element 
$\oA_{\Fol,V,\oa}\in\Omega^1(M,\algebroidg)$  is the 
unique germ cord in  $\CM(M,\algebroidg)$ which satisfies 
the following two conditions.
\begin{itemize}
\item The initial term $\oA_{\Fol,V,\oa}(0,\cdot)$
of $\oA_{\Fol,V,\oa}$ defines the foliation $\Fol$.
\item The equation $\imath_V(\oA)+\oa=0$ is satisfied 
in $\Omega^0(M,\algebroidg)$.
\end{itemize}
\end{prop}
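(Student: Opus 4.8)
The plan is to exploit that equation~\eqref{eq:recursion-A} is, after dividing by $\oa$ (which is nonzero for $\h$ near $0$ since its initial term is negative), a linear first‑order evolution equation in the variable $\h$,
\[\partial_\h\oB=\frac{1}{\oa}\left(L(\oB)+\oB\oa'+d\oa\right),\]
so that $\oB=\oA_{\Fol,V,\oa}$ exists and is uniquely determined for $\h$ in a neighborhood of $0$ by its initial term $\oB(0,\cdot)=\oa(0,\cdot)\,a_0$. First I would dispose of the first bullet, which is immediate: since $\oa(0,\cdot)<0$ is nowhere zero and $da_0=a_1\wedge a_0$, the initial term $\oB(0,\cdot)=\oa(0,\cdot)\,a_0$ is a nowhere‑zero multiple of $a_0$ and hence defines $\Fol$. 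The substance is to verify that $\oB$ is a germ cord, that $\imath_V(\oB)+\oa=0$, and that these two properties characterize $\oB$.

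For the second bullet I would set $\phi:=\imath_V(\oB)+\oa$ and show $\phi\equiv0$. Contracting~\eqref{eq:recursion-A} with $V$, using $[\imath_V,L]=0$ and $\imath_V(d\oa)=L\oa$, yields for $u:=\imath_V(\oB)$ the relation $Lu+u\oa'-\oa u'+L\oa=0$; substituting $u=\phi-\oa$ collapses this to the homogeneous equation $L\phi+\phi\oa'-\oa\phi'=0$. Since the initial term is $\phi(0,\cdot)=\imath_V\!\big(\oa(0,\cdot)a_0\big)+\oa(0,\cdot)=\oa(0,\cdot)\big(a_0(V)+1\big)=0$ (because $a_0(V)=-1$), the same first‑order transport/uniqueness argument forces $\phi\equiv0$.

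The cord equation $F_\oB:=d\oB+\oB\oB'=0$ is the main obstacle, because contracting with $V$ only controls the component $\imath_V F_\oB$. I would argue in two moves. First, using $\imath_V(\oB)=-\oa$ just established, a direct computation ($\imath_V d\oB=L\oB+d\oa$ and $\imath_V(\oB\wedge\oB')=-\oa\oB'+\oa'\oB$) shows that $\imath_V F_\oB$ is exactly the left‑hand side of~\eqref{eq:recursion-A}, hence $\imath_V F_\oB=0$. Second, I would record the purely formal Bianchi identity $dF_\oB=F_\oB\wedge\oB'-\oB\wedge F_\oB'$ (from $d\oB=F_\oB-\oB\oB'$ and $\oB\wedge\oB=\oB'\wedge\oB'=0$) and compute $L F_\oB=\imath_V dF_\oB$ (the term $d\,\imath_V F_\oB$ drops by the first move); with $\imath_V\oB'=-\oa'$ and $\imath_V F_\oB'=0$ this gives the homogeneous transport equation $L F_\oB=\oa F_\oB'-\oa' F_\oB$. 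It then remains only to check $F_\oB(0,\cdot)=0$. Writing $c=\oa(0,\cdot)$, reading $\oB'(0,\cdot)$ off the evolution equation and using $L a_0=a_1$ (which follows from $\imath_V(a_1\wedge a_0)=a_1$ via $a_1(V)=0$, $a_0(V)=-1$), the terms reorganize as $F_\oB(0,\cdot)=(dc\wedge a_0+a_0\wedge dc)+c(a_1\wedge a_0+a_0\wedge a_1)=0$. The transport equation with vanishing initial data then forces $F_\oB\equiv0$ for $\h$ near $0$, so $\oB\in\CM(M,\algebroidg)$.

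Finally, for uniqueness I would run the first move of the previous paragraph backwards: any germ cord $\oB$ with $\imath_V(\oB)=-\oa$ automatically satisfies~\eqref{eq:recursion-A}, since $0=\imath_V F_\oB$ equals the left‑hand side of that equation. Moreover, if the initial term of $\oB$ defines $\Fol$ (so $\oB(0,\cdot)=\lambda a_0$ for a nowhere‑zero $\lambda$) and $\imath_V(\oB)+\oa=0$, then contracting $\oB(0,\cdot)$ with $V$ gives $-\lambda=-\oa(0,\cdot)$, whence $\oB(0,\cdot)=\oa(0,\cdot)a_0$. Two solutions of the linear equation~\eqref{eq:recursion-A} with the same initial term coincide, so any such $\oB$ equals $\oA_{\Fol,V,\oa}$. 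I expect the one genuinely delicate point to be the initial‑value computation $F_\oB(0,\cdot)=0$: this is precisely where the Frobenius relation $da_0=a_1\wedge a_0$ enters to kill the curvature at $\h=0$, after which everything reduces to uniqueness for linear transport equations.
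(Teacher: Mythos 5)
Your proof is correct and runs on the same engine as the paper's: every assertion is reduced to uniqueness for a first-order linear transport equation in the $\h$-direction with vanishing initial term, and the Frobenius relation $da_0=a_1\wedge a_0$ enters exactly once, to kill the curvature at $\h=0$. The differences are in organization and in how the transport equation for the curvature is obtained. The paper proves flatness first, by differentiating Equation~\ref{eq:recursion-A} in $\h$ and verifying through a direct expansion that $\oE=d\oB-\oB'\oB$ satisfies $L(\oE)+[\oE,\oa]=0$; only afterwards does it contract with $V$ to obtain $\imath_V(\oB)=-\oa$. You reverse the order: you establish $\imath_V(\oB)+\oa=0$ first, observe that $\imath_V F_\oB$ is literally the left-hand side of Equation~\ref{eq:recursion-A}, and then get the homogeneous equation $LF_\oB=\oa F_\oB'-\oa'F_\oB$ by contracting the Bianchi identity $dF_\oB=F_\oB\wedge\oB'-\oB\wedge F_\oB'$ with $V$. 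Since $\oE=F_\oB$, this is the same transport equation the paper derives, but your derivation is structural rather than computational, and it has a genuine side benefit: the identity $\imath_V F_\oB=L(\oB)+[\oB,\oa]+d\oa$ is precisely the reason an arbitrary germ cord with $\imath_V(\oB)=-\oa$ satisfies Equation~\ref{eq:recursion-A}, a point the paper's uniqueness argument asserts without justification and which your version makes explicit. The one structural constraint your route imposes is that flatness now depends logically on the second bullet, so your two middle steps cannot be interchanged; as written the dependency is respected, the initial-value computation $F_\oB(0,\cdot)=0$ is carried out correctly, and the argument is complete.
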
 
\begin{proof}
Let $\oB=\oA_{\Fol,V,\oa}$. We first need to show that 
$d\oB=\oB'\oB$. Let us assume that 
\[\quot(\oB)=\sum_{n=0}^\infty b_n\h^n\ \ \ \text{and}\ \ \ 
\quot(\oa)=\sum_{n=0}^\infty x_n\h^n.\] It is 
then clear that $\oB'(0,x)=b_1(x)$ is given as 
$L(b_0)+x_1b_0+dx_0$ and we compute
\begin{align*}
b_1b_0&=(L(b_0)+x_1b_0+dx_0)a_0
=(x_0L(a_0)+dx_0)a_0
=d(x_0a_0).
\end{align*}
Thus, $d(b_0)=b_1b_0$ and the initial term of 
$\oE=d\oB-\oB'\oB$ is zero. Moreover, the differential equation 
of (\ref{eq:recursion-A}) implies $L(\oB')+d\oa'+B\oa''-B''\oa=0$ 
and we can thus compute
\begin{align*}
L(\oE)+[\oE,\oa]&=dL(\oB)-L(\oB')\oB-\oB'L(\oB)+
d\oB\oa'-\oB'\oB\oa'-d\oB'\oa-\oB''\oB\oa\\
&=dL(\oB)-L(\oB')\oB-\oB'L(\oB)+d\oB\oa'+\oB'(L(\oB)
+d\oa)-d\oB'\oa-\oB(L(\oB')+d\oa')\\
&=dL(\oB)+d\oB\oa'-\oB d\oa'-d\oB'\oa+\oB' d\oa\\
&=d\left(L(\oB)+\oB \oa'-\oB'\oa\right)=0
\end{align*} 
The equation $L(\oE)+[\oE,\oa]=0$ and the initial value condition
$\oE(0)=d(b_0)-b_1b_0=0$ uniquely determine $\oE$ in a 
neighborhood of the origin. This implies that in a neighborhood of 
the origin we have $\oE=0$, i.e. $\oB=\oA_{\Fol,V,\oa}$ is a germ 
cord and belongs to $\CM(M,\algebroidg)$.
Let us denote $\imath_V(\oB)$ by $\oC$. It follows that
\begin{align*}
L_V(\oB)+[\oB,\oa]+d\oa=0\ \ \ 
\Rightarrow\ \ \ &(\imath_V\circ d)(\oC)+[\imath_V(\oB),\oa]
+(\imath_V\circ d)(\oa)=0\\
\Rightarrow\ \ \ &L(\oC+\oa)+[\oC,\oC+\oa]=L(\oC+\oa)+[\oC,\oa]=0.
\end{align*}
From this last equation, and the uniqueness of solutions for 
differential equations, it follows that $X=-C$. \\

Let $\oA$ be a germ cord in $\CM(M,\algebroidg)$ which is 
compatible with a foliation $\Fol$. Set $\oa=-\imath_V(\oA)$ 
and let $\oB=\oA_{\Fol,V,\oa}$. By definition, $\oA(0,x)=a_0$ and 
$\oB(0,x)=b_0$ differ by multiplication by a non-zero constant. 
Since $\imath_V(a_0)=\imath_V(b_0)$, it follows that the initial
 terms of $\oA$ and $\oB$ agree. Moreover, both $\oA$ and 
$\oB$ satisfy Equation~\ref{eq:recursion-A}, and we thus have
$L(\oB-\oA)+[\oB-\oA,\oa]=0$. This differential equation for 
$\oB-\oA$, together with the initial condition that the initial
term of $\oB-\oA$ vanishes, force $\oB-\oA$ to vanish in a 
neighborhood of the origin in $\R$.
\end{proof}

Proposition~\ref{prop:MC-elements} implies that the germ cords
in $\CM(M,\algebroidg)$ which correspond to a foliation 
$\Fol$ are determined by 
their evaluation over the vector field $V$. This evaluation map 
takes its values in $\Omega^0(M,\algebroidg)$. 

\begin{remark}\label{rmk:quantum-version}
The same statement is also true for $\algebroidq$, that the quantum 
cords in $\CM(M,\algebroidq)$ which correspond to $\Fol$ 
are determined by their 
evaluation over the vector field $V$. This latter evaluation map 
 takes its values in $\Omega^0(M,\algebroidq)$. 
\end{remark} 
\begin{remark}\label{rmk:group-to-groupoid}
Fix $\oA\in\CM(M,\Algebroidg)$ and let
$\qA=\quot(\oA)=\sum_{m=0}^\infty
a_m(\h-s)^m$.
Note that $s=s_\oA=s_\qA$ is a smooth function while 
$a_i\in\Omega^1(M,\R)$. Since $\oA$ is a germ cord (and $\qA$ 
is a quantum cord) it follows 
that $da_0=a_1\wedge (a_0+ds_\oA)$. If we further assume that 
$a=a_0+ds_\oA\in\Omega^1(M,\R)$ is nowhere zero, 
it follows that $a$ 
determines a smooth transversely oriented codimension-one foliation 
on $M$. If $\oB=\oY\star \oA$ with $\oY(\h,x)=\h+s_\oA(x)$ 
(and  $s_\oB=0$) we find $b_0=a$ while  $\oB\in\CM(M,\algebroidg)$.
This observation implies that every germ cord (respectively, 
quantum cord) 
is gauge equivalent to a germ cord (respectively, quantum cord) 
in $\CM(M,\algebroidg)$ (respectively, in $\CM(M,\algebroidq)$).
The induced actions of 
$\Omega^0(M,\Groupg)$ and $\Omega^0(M,\Groupq)$ on 
$\CM(M,\algebroidg)$ and $\CM(M,\algebroidq)$ give the 
moduli spaces
\begin{align*}
\Mod(M,\algebroidg,\Groupg)&=
\CM(M,\algebroidg)/\Omega^0(M,\Groupg)\ \  \text{and}\ \
\Mod(M,\algebroidq,\Groupq)=
\CM(M,\algebroidq)/\Omega^0(M,\Groupq).
\end{align*}
The passage from $\algebroidg$ and $\algebroidq$ to 
$\Algebroidq$ and $\Algebroidg$ may be viewed as a detour
towards classification which is forced by the lack of Lie groups which 
integrate the Lie algebras $\algebroidq$ and $\algebroidq$. 
Integrability of Lie algebroids is an interesting question, and the 
reader is referred to \cite{Integrability-Lie-Algebroid} for some nice 
results/obstructions.
\end{remark}

\begin{thm}\label{thm:gauge-group}
The groups $\Omega^0(M,\Groupg)$  
and $\Omega^0(M,\Groupq)$  act on 
 $\CM(M,\algebroidg)$ and $\CM(M,\algebroidq)$, 
respectively.
Over the space $\Fcal(M)$ of smooth transversely oriented  
codimension-one foliations of $M$, the actions of 
$\Omega^0(M,\Groupg)$ and $\Omega^0(M,\Groupq)$ 
preserve the fibers of 
\[\proj_{\mathsf{g}}:\CM(M,{\algebroidg^*})\ra \Fcal(M)
\ \ \ \text{and}
\ \ \ \proj_{\mathsf{q}}:\CM(M,{\algebroidq^*})\ra \Fcal(M).\]
while the actions are transitive and without  
fixed  points on the fibers. 
\end{thm}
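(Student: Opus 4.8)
The plan is to treat the four assertions in order of increasing difficulty, reducing the only substantial part --- transitivity and freeness on the fibers --- to a single scalar transport equation. That the two groups act is immediate: since $\Groupg$ (respectively $\Groupq$) consists of the arrows of $\Groupoidg$ (respectively $\Groupoidq$) with source and target $0$, for $\oY\in\Omega^0(M,\Groupg)$ we have $t_\oY=s_\oA=0$, so $\oY\star\oA$ is defined by Equation~\ref{eq:gauge-action} and again has source $0$; hence the groupoid action restricts to the asserted group action on $\CM(M,\algebroidg)$, and likewise in the quantum case. To see that the fibers are preserved I would read off the initial term of $\oY\star\oA$: since $\oY(0,\cdot)=0$ identically we have $(d\oY)|_{\h=0}=0$ and $\oY'(0,\cdot)>0$, while $(\oA\circ\oY)|_{\h=0}=a_0$, so that
\[(\oY\star\oA)(0,\cdot)=\frac{a_0}{\oY'(0,\cdot)}\]
is a positive multiple of $a_0$ and defines the same transversely oriented foliation. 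Thus $\proj_\sg(\oY\star\oA)=\proj_\sg(\oA)$, and the identical computation covers $\proj_\sq$.

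For the fiber statements I would fix $\Fol$ together with a transverse field $V$ satisfying $a_0(V)=-1$ and exploit Proposition~\ref{prop:MC-elements} (and Remark~\ref{rmk:quantum-version} in the quantum case): a cord in the fiber is completely determined by its evaluation $-\imath_V(\oA)\in\Omega^0(M,\algebroidg)$. Contracting $\oY\star\oA=\oB$ with $V$ --- using $\imath_V(\oA\circ\oY)=(\imath_V\oA)\circ\oY$ and $\imath_V(d\oY)=V(\oY)$ --- and writing $\oa=-\imath_V(\oA)$, $\ob=-\imath_V(\oB)$, I would show that, since $\oY\star\oA$ and $\oB$ both already define $\Fol$, the uniqueness clause of Proposition~\ref{prop:MC-elements} makes $\oY\star\oA=\oB$ equivalent to the single $\Omega^0$-valued equation
\begin{equation}\label{eq:transport}
\ob\,\oY'=\oa\circ\oY+V(\oY),\qquad \oY(0,\cdot)=0,\quad \oY'(0,\cdot)>0.
\end{equation}
Transitivity then amounts to solving \eqref{eq:transport} for $\oY$ given $\oa$ and $\ob$, while freeness amounts to uniqueness of the solution when $\oa=\ob$, where $\oY(\h,x)=\h$ is visibly a solution and hence the only one.

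What remains, and what I expect to be the main obstacle, is to solve \eqref{eq:transport} \emph{inside the gauge group}. In the quantum case this is an algebraic recursion: expanding $\oY=\sum_{m\geq1}c_m\h^m$ and matching powers of $\h$ gives $c_1=x_0/z_0>0$, where $x_0,z_0$ are the initial terms of $\oa,\ob$ and share the sign fixed by the orientation of $\Fol$, followed by a triangular recursion of the shape $(m+1)z_0\,c_{m+1}=P_m(c_1,\dots,c_m,V(c_1),\dots,V(c_m))$ that determines each $c_{m+1}$ uniquely as a smooth function on $M$; this yields a unique $\oY\in\Omega^0(M,\Groupq)$. In the germ case \eqref{eq:transport} is a first-order quasilinear PDE on $M\times\R$ whose characteristic field is $W=V-\ob\,\partial_\h$, along which $\oY$ satisfies the ordinary differential equation $\dot\oY=-\oa\circ\oY$; since $\ob(0,\cdot)\neq0$ the slice $\{\h=0\}$ is non-characteristic, so the Cauchy problem with data $\oY|_{\h=0}=0$ has a unique smooth germ solution, whose linear coefficient $\oY'(0,\cdot)=x_0/z_0>0$ places it in $\Omega^0(M,\Groupg)$. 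The delicate point throughout will be confirming that this solution is a bona fide gauge transformation --- smooth in the $M$-directions and an honest orientation-preserving germ (respectively a power series with positive linear term) in the variable $\h$ --- rather than merely a formal solution of \eqref{eq:transport}.
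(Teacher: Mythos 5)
Your proposal is correct and follows essentially the same route as the paper: compute the initial term of $\oY\star\oA$ to see that fibers are preserved, invoke Proposition~\ref{prop:MC-elements} to reduce equality of cords in a fiber to equality of their contractions against $V$, and then solve the resulting transport equation (a triangular recursion in the quantum case, a non-characteristic Cauchy problem in the germ case) for transitivity, with uniqueness giving freeness. Your treatment of the germ case via the characteristic field of the quasilinear PDE is somewhat more explicit than the paper's brief appeal to unique solvability, but it is the same argument.
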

\begin{proof}
Let us assume that $\oY\in\Omega^0(M,\Groupg)$ and 
$\quot(\oY)=\sum_{m=1}^\infty y_i\h^i$.
The foliations given by $\oA$ and $\oY\star\oA$ are defined 
by the $1$-forms $\oA(0)=a_0$ and $\oA(0)/\oY'(0)=a_0/y_1$, 
respectively. Since $y_1$ is a positive valued function on $M$, 
it follows that the action of $\Omega^0(M,\Groupg)$
preserves the fibers of $\proj_{\mathsf{g}}$. 
We then need to show that this latter action is transitive and 
without fixed points. Fix $\Fol\in\Fcal(M)$ and 
the transverse vector field $V$. Let $\oA=\oA_{\Fol,V,1}$ and 
given a section $\oX\in\Omega^0(M,\algebroidg)$ with 
$\oX(0)<0$, solve the equation
\begin{align*}
\oX=\imath_V(\ob\star\oA)
=\imath_V\left(\frac{\oA\circ\ob-d\ob}{\ob'}\right)
=-\frac{1+L_V(\ob)}{\ob'}
\end{align*}
for $\ob\in \Omega^0(M,\Groupg)$. Evaluation at $0$ gives 
$x_0+1/y_1=0$, which implies $y_1=-1/x_0>0$. Furthermore, the 
above differential equation uniquely determines $\ob$ in a 
neighborhood of the origin in $\R$ in terms of the given  
$\oa\in\Omega^0(M,\algebroidg)$ and $\oA$. This completes 
the proof of the theorem for germ cords with the help of 
Proposition~\ref{prop:MC-elements}.\\

It follows that the gauge action of $\Groupq$ preserves the 
fibers of $\proj_{\mathsf{q}}$ and that the action is transitive
over the fibers. If 
$\qY=\sum_{m=1}^\infty y_m\h^m\in\Omega^0(M,\Groupq)$ preserves 
a quantum cord 
\[\qA=\sum_{m=0}^\infty a_m\h^m\in\CM(M,\algebroidq)\]
with $a_0\neq 0$, we find 
$\qY'\qA=\qA\circ\qY-d\qY$. We may contract this equation using 
a vector field $V$ which is transverse to the foliation induced by 
$a_0$ and satisfies $\imath_V(a_0)=1$, to get
\[\qY'\imath_V(\qA)=\imath_V(\qA)\circ\qY-L_V\qY\]
Let us assume that $\imath_V(\qA)=\sum_{m=0}^\infty b_m\h^m$, 
where $b_0=1$. The initial term in the above equation 
reads as $y_1=1$. Looking at the coefficient of $\h^n$ 
gives an equation of the from
\[y_{n+1}=F_n(y_1,\ldots,y_n,b_1,\ldots,b_n)\]
which uniquely determines $y_{n+1}$ by induction. Since $y_1=1$ and 
$y_m=0$ for $m>1$ is an obvious solution, it follows that this is 
the only possibility. This completes the proof of the theorem.
\end{proof}

%\newpage
\section{Leaves and holonomy}
\label{sec:holonomy}
\subsection{Impotent cords}
Let us fix a germ cord $\oA\in\Cordobag$. If $\oA(0)$ is a nowhere 
zero $1$-form, it defines a transversely oriented codimension one 
foliation $\Fol=\proj(\oA)\in\Fcal(M)$. 
Associated with every leaf $L$ 
of $\Fol$ we obtain an immersion $\imath_L:L\ra M$ which gives the 
restriction $\oA|_{L}=\imath_L^*\oA\in\CM(L,\algebroidg)$.
The nature of this germ cord is quite different from the nature 
of $\oA$ in the following sense. Unlike $\oA(0)$ which is nowhere 
zero, $\oA|_{L}$ takes its values in $\algebrag\subset \algebroidg$ 
and $\oA|_{L}(0)=0$.

\begin{defn}\label{defn:impotent}
A germ cord $\oA\in\Cordobag$ is called {\emph{impotent}} if 
$\oA\in\Omega^1(M,\algebrag)$. Similarly, $\qA\in\Cordobaq$ is
called {\emph{impotent}} if $\qA\in\Omega^1(M,\algebraq)$. 
The spaces of impotent germ cords and impotent quantum cords 
are denoted by $\Cordobaig$ and $\Cordobaiq$, respectively.
\end{defn}

\begin{lem}[Poincar\'e Lemma]\label{lem:locally-trivial}
Every impotent germ cord $\oA\in\Cordobaig$ is locally gauge
equivalent to zero, i.e. every $x\in M$ has an open neighborhood
 $U_x\subset M$  so that $\oA|_{U_x}$ is 
gauge equivalent to zero. Similarly, every impotent quantum cord 
$\qA\in\Cordobaiq$ is locally gauge equivalent to zero.
\end{lem}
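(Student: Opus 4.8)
The plan is to prove the germ statement by a flat-connection/Frobenius argument and to obtain the quantum statement from the same mechanism. Working locally, fix $x_0\in M$ and a coordinate patch $U$. View an impotent germ cord $\oA\in\Omega^1(U,\algebrag)$ as a $1$-form on the product $\R\times U$ (with $\R$-coordinate $\h$) that has no $d\h$-component, and set $\omega:=d\h-\oA$. Its kernel $H=\ker\omega$ is a corank-one distribution on $\R\times U$, transverse to the $\h$-lines. First I would compute the total exterior derivative on $\R\times U$, obtaining $d\omega=-d_x\oA-d\h\wedge\oA'$ where $\oA'=\partial_\h\oA$ and $d_x$ is the $M$-differential used in the paper; substituting the cord equation $d_x\oA=-\oA\wedge\oA'$ yields exactly $d\omega=\oA'\wedge\omega$. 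Hence $d\omega\wedge\omega=0$, so $H$ is Frobenius-integrable.

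Next I would exploit impotence. Since $\oA$ takes values in $\algebrag$, its coefficients vanish at $\h=0$, so $\oA|_{\{\h=0\}}=0$ and therefore $H|_{\{\h=0\}}=T(\{\h=0\})$; that is, the slice $\{\h=0\}$ is an integral leaf of $H$. By the Frobenius theorem, after shrinking $U$ there is a submersion $\oY\colon\R\times U\to\R$ whose level sets are the leaves of $H$. Because the vertical $\h$-direction is never contained in $H$, I may normalize $\oY$ so that $\partial_\h\oY>0$, and because $\{\h=0\}$ is the leaf through $(0,x_0)$ I may further arrange $\oY(0,\cdot)=0$. For each fixed $x$ the germ $\oY(\cdot,x)$ is then an orientation-preserving germ of diffeomorphism of $(\R,0)$ fixing the origin, i.e. $\oY\in\Omega^0(U,\Groupg)$.

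It remains to identify $\oY$ as the desired gauge transformation. The first-integral property $d\oY|_H=0$ reads precisely $d_x\oY+\oA\,\partial_\h\oY=0$, i.e. $\oA=-d\oY/\oY'=\oY\star 0$, so $\oA|_U$ is gauge equivalent to zero, as required. For the quantum case, an impotent $\qA=\sum_{m\ge 1}a_m\h^m\in\Cordobaiq$ has $a_0=0$, and I would solve $d\qY+\qA\qY'=0$ for $\qY=\sum_{m\ge 1}y_m\h^m$ order by order in $\h$. The order-one equation is $dy_1+a_1y_1=0$, solvable locally precisely because impotence together with flatness forces $da_1=0$ (the flatness relation $da_1=\sum_{p+q=2}(p-q)a_pa_q$ vanishes once $a_0=0$), so $a_1=d\phi$ and $y_1=e^{-\phi}>0$. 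Each higher order then gives a linear equation $dy_n+n\,a_1y_n=\beta_n$ with $\beta_n$ determined by the lower coefficients; equivalently, when a flat germ representative is available one applies the Taylor-expansion functor $\quot$ to the germ construction.

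The main obstacle is the integrability bookkeeping: checking that the Maurer--Cartan (cord) equation is exactly the condition making $\omega$ Frobenius-integrable in the germ case, and that the same flatness guarantees solvability at each stage of the quantum recursion, i.e. closedness of the twisted forms $e^{n\phi}\beta_n$. The decisive role of impotence is to pin down the base leaf $\{\h=0\}$ (equivalently, to start the recursion with a closed $a_1$); this is exactly what fails for a general cord and explains why the Poincar\'e Lemma holds here but not in $\CM(U,C^\infty(\R))$.
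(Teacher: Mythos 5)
Your treatment of the germ case is correct and is, up to packaging, the paper's own argument: the paper checks that the vector fields $\xi_i=\partial_i+f_i\partial_\h$ commute and straightens them, which is exactly the Frobenius theorem for the distribution $H=\ker(d\h-\oA)$ that you integrate via the dual criterion $d\omega\wedge\omega=0$. Your explicit use of impotence to identify $\{\h=0\}$ as a leaf and to normalize $\oY(0,\cdot)=0$, so that $\oY$ lands in $\Groupg$ rather than merely in $\Groupoidg$, makes precise a point the paper leaves implicit.

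The quantum half is where your proposal has a genuine gap. You set up the correct recursion $dy_n+na_1y_n=\beta_n$ with $\beta_n=-\sum_{p=1}^{n-1}p\,y_p\,a_{n+1-p}$, and you correctly extract $da_1=0$ from flatness once $a_0=0$, but the solvability of each subsequent step --- closedness of $e^{n\phi}\beta_n$ --- is precisely the content of the lemma at order $n$, and you name it as ``the main obstacle'' without carrying it out. It is true (for instance at $n=2$, with $y_1=e^{-\phi}$ one has $e^{2\phi}\beta_2=-e^{\phi}a_2$ and $d(e^{\phi}a_2)=e^{\phi}(a_1\wedge a_2+da_2)=0$ because flatness at order two reduces to $da_2=-a_1\wedge a_2$ when $a_0=0$), but for general $n$ this is a nontrivial induction that has to be written down; as it stands the quantum statement is asserted rather than proved. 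Your fallback --- apply $\quot$ to the germ construction --- does not close the gap either, because it presumes that the given impotent quantum cord is the Taylor expansion of an impotent germ cord, which is not among the hypotheses: a Borel-type realization produces a germ that satisfies the Maurer--Cartan equation only to infinite order, not an actual cord. Note that the paper's definitions offer a shortcut you did not use: a quantum cord is by definition locally of the form $\qY\star 0$ with $\qY\in\Omega^0(U,\Groupoidq)$; writing $\qY=\sum_m t_m\h^m$, the constant term of $\qY\star 0$ is $-dt_0/t_1$, so impotence forces the target $t_0$ to be locally constant, and composing $\qY$ with the locally constant translation by $-t_0$ (which does not change $\qY\star 0$) yields a gauge in $\Omega^0(U,\Groupq)$. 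Either route works, but one of them must actually be completed.
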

\begin{proof}
Choose coordinates $(x_1,\ldots,x_n)$ on a chart $U$ 
around $x$ so that $x$ corresponds to the origin and 
$\oA$ is given by $\sum_if_idx_i$ with 
$f_i\in \Omega^0(U,\algebrag)$.
After shrinking $U$, we can assume that for some 
$\epsilon>0$, the function $f_i$ is defined for 
all $(\h,x)\in W=(-\epsilon,\epsilon)\times U$.
From $d\oA=\oA'\oA$ we get
\[\partial_if_j+f_i\partial_\h f_j=\partial_jf_i+f_j\partial_\h f_i
\ \ \ \ \forall\ \ i,j\in\{1,\ldots n\}.\]
The above equation implies that the vector fields 
$\xi_i=\partial_i+f_i\partial_\h$ commute. We can thus choose new
coordinates $(y_0,y_1,\ldots,y_n)$ on an open neighborhood $W'$ 
of $(0,x)\in W$ so that $\partial/\partial y_i=\xi_i$, 
$y_0$ agrees with $\h$ over $x$ and the foliation is 
given by $\{y_0=\text{constant}\}$. 
Choose $U_x\subset U$ such that it contains $x$ and
$(-\delta,\delta)\times U_x$ is a subset of $W'$. Set $\oY$ equal to 
$y_0$ over $U_x$. It is then clear that 
$\oY\in \Omega^0(U_x,\Groupg)$. Furthermore, we have 
$\xi_i\oY=0$, which means that $\partial_i\oY+f_i\oY'=0$.
This means that $\oA|_{U_x}=-d\oY/\oY'=\oY\star 0$
and completes the proof for impotent germ cords. The statement 
for impotent quantum cords follows from a similar argument.
\end{proof}

The gauge group sends impotent cords to impotent cords. After 
dividing  by the action of the gauge group, we obtain the 
moduli spaces 
\begin{align*}
\Moduliig&=\Cordobaig/\Gaugeg\ \ \ \text{and}\\
\Moduliiq&=\Cordobaiq/\Gaugeq,
\end{align*}
called the moduli spaces of impotent germ cords and 
impotent quantum cords, respectively.

\begin{prop}\label{prop:classification-impotent}
For every smooth manifold $M$, there are
natural bijections
\begin{align*}
&\rho_{M}^\sg:\Moduliig\lra 
\ov\Hom(\pi_1(M),\Groupg):=\Hom(\pi_1(M),\Groupg)/\Groupg
\ \ \ \ \text{and}\\
&\rho_{M}^\sq:\Moduliiq\lra 
\ov\Hom(\pi_1(M),\Groupq):=\Hom(\pi_1(M),\Groupq)/\Groupq.
\end{align*}
If $f:M_1\ra M_2$ is a smooth map between smooth 
manifold, the following diagram is commutative:
\begin{displaymath}
\begin{diagram}
\Moduliign2&&\rTo{f^*}&&\Moduliign1&&\\
&\rdTo{\quot}&&
&\vLine{\rho_{M_1}^\sg}&\rdTo{\quot}&\\
\dTo{\rho_{M_2}^\sg}
&&\Moduliiqn2&\rTo{f^*\ }&  \HonV  &&\Moduliiqn1\\
&&\dTo_{\rho_{M_2}^\sq}&&\dTo{}&&\dTo{\rho_{M_1}^\sq}\\
\ov\Hom(\pi_1(M_2),\Groupg)&\hLine&\VonH&\rTo{f^*\  }&
\ov\Hom(\pi_1(M_1),\Groupg)&&\\
&\rdTo{\quot}&&&&\rdTo{\quot}&\\
&&\ov\Hom(\pi_1(M_2),\Groupq)&&\rTo{f^*\ }  
 &&\ov\Hom(\pi_1(M_1),\Groupq)\\
\end{diagram}
\end{displaymath}
\end{prop}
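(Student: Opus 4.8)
The plan is to transcribe the proof of Theorem~\ref{thm:classification-cords} almost verbatim, replacing the full group $\Diff^+(\R)$ by the isotropy group $\Groupg$ (resp.\ $\Groupq$), and replacing the built-in local triviality of a locally trivial cord by the Poincar\'e Lemma (Lemma~\ref{lem:locally-trivial}). First I would build $\rho_M^\sg$. Given $\oA\in\Cordobaig$, Lemma~\ref{lem:locally-trivial} supplies a cover $\{U_\alpha\}$ of $M$ together with $\oY_\alpha\in\Omega^0(U_\alpha,\Groupg)$ such that $\oA|_{U_\alpha}=\oY_\alpha\star 0$. The one genuinely new point — and the reason the structure group is the isotropy group $\Groupg$ and not the full groupoid $\Groupoidg$ — is that these $\oY_\alpha$ fix the origin, i.e.\ $\oY_\alpha(0,\cdot)\equiv 0$. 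This is exactly what the construction in Lemma~\ref{lem:locally-trivial} gives: impotence means $f_i(0,\cdot)\equiv 0$, so the commuting fields $\xi_i=\partial_i+f_i\partial_\h$ restrict to $\partial_i$ along $\{\h=0\}$; hence $\{\h=0\}$ is itself a leaf and the leaf coordinate $\oY_\alpha=y_0$ vanishes identically on it.

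With this in hand the rest follows as in Theorem~\ref{thm:classification-cords}. On overlaps $c_{\alpha\beta}=\oY_\alpha\circ\oY_\beta^{-1}$ satisfies $c_{\alpha\beta}\star 0=0$, whence $dc_{\alpha\beta}=0$ and $c_{\alpha\beta}:U_\alpha\cap U_\beta\to\Groupg$ is locally constant and origin-fixing; the resulting cocycle defines a class in $\HC^1(M,\Groupg)\cong\Hom(\pi_1(M),\Groupg)/\Groupg$, the standard classification of flat bundles with discrete structure group. I would then repeat the three steps of Theorem~\ref{thm:classification-cords}: changing the $\oY_\alpha$ by locally constant left factors $d_\alpha$ conjugates the cocycle and gauging $\oA$ globally leaves it fixed (well-definedness on $\Moduliig$); a relation $\rho_M^\sg(\oA)=\rho_M^\sg(\oB)$ produces locally constant $d_\alpha$ that assemble into a global $\oX\in\Gaugeg$ with $\oX\star\oB=\oA$ (injectivity); and a partition of unity $\{\lambda_\gamma\}$ yields $\oY_\alpha=\sum_\gamma\lambda_\gamma\,c_{\gamma\alpha}\in\Omega^0(U_\alpha,\Groupg)$, where now $\oY_\alpha(0,\cdot)=0$ because each $c_{\gamma\alpha}(0)=0$ and $\partial_\h\oY_\alpha>0$, so that $\oY_\alpha^{-1}\star 0$ agree on overlaps and glue to an impotent germ cord hitting the prescribed class (surjectivity). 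The bijection $\rho_M^\sq$ is constructed identically, using the quantum half of Lemma~\ref{lem:locally-trivial}, the group $\Groupq$ of power series with positive linear term, and the built-in local triviality of quantum cords; convex combinations preserve both the vanishing constant term and the positivity of the linear coefficient, so the $\oY_\alpha$ again land in $\Groupq$.

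For the cube I would verify its six faces. The two faces carrying $\rho^\sg$ and $\rho^\sq$ are the naturality squares: pulling $(\{U_\alpha\},\{\oY_\alpha\})$ back along $f$ gives $(\{f^{-1}U_\alpha\},\{\oY_\alpha\circ f\})$ trivializing $f^*\oA$, with cocycle $c_{\alpha\beta}\circ f$, which is precomposition with $f_*:\pi_1(M_1)\to\pi_1(M_2)$ on the $\Hom$ side — the same computation as the square in Theorem~\ref{thm:classification-cords}. The two faces carrying only $\quot$ and $f^*$ commute formally, since $\quot$ is a homomorphism commuting with $\circ$, $d$, and hence $f^*$. The remaining germ-to-quantum faces commute because $\quot:\Groupg\to\Groupq$ intertwines the gauge actions: applying $\quot$ to $\oA|_{U_\alpha}=\oY_\alpha\star 0$ shows $\{\quot(\oY_\alpha)\}$ trivializes $\quot(\oA)$ and sends the cocycle $\{c_{\alpha\beta}\}$ to $\{\quot(c_{\alpha\beta})\}$, i.e.\ $\rho_M^\sq\circ\quot=\quot_*\circ\rho_M^\sg$. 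Commutativity of all six faces yields commutativity of the cube.

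The hard part will be the reduction of structure group — verifying that every local trivialization, and hence every transition function, actually lies in the isotropy group $\Groupg$ (resp.\ $\Groupq$) rather than in the ambient groupoid. Everything downstream (local constancy of $c_{\alpha\beta}$, its reading as a class in $\HC^1(M,\Groupg)$, and the compatibility with $\quot$) depends on this. For germ cords it is the leaf argument above; for quantum cords, where the non-Hausdorff topology of $\Gamma^\sq$ blocks a geometric argument, I would instead track Taylor coefficients and check that the recursion in Lemma~\ref{lem:locally-trivial} preserves both $\oY_\alpha(0,\cdot)=0$ and the positivity of the linear coefficient. Once this reduction is secured, the remainder is routine bookkeeping parallel to Theorem~\ref{thm:classification-cords}.
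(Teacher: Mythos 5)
Your proposal is correct and follows essentially the same route as the paper: the paper's proof also transcribes Theorem~\ref{thm:classification-cords} with $\Diff^+(\R)$ replaced by $\Groupg$ (resp.\ $\Groupq$), invokes Lemma~\ref{lem:locally-trivial} to obtain local trivializations $\oY_\alpha\in\Omega^0(U_\alpha,\Groupg)$, and uses the same partition-of-unity construction for surjectivity, declaring the cube ``straight-forward from the definitions.'' Your explicit justification that the trivializations land in the isotropy group (because $\{\h=0\}$ is a leaf of the foliation built from the commuting fields $\xi_i$) is exactly the content already packaged into Lemma~\ref{lem:locally-trivial}, so it is a welcome clarification rather than a departure.
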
   
\begin{proof}
The proof is identical with the proof of 
Theorem~\ref{thm:classification-cords}
for the most part, as is sketched below. Pick 
$\oA\in\Cordobaig$ and cover $M$ with open subsets 
$U_\alpha$ so that $\oA|_{U_\alpha}=\oY_\alpha\star 0$ for
$\oY_\alpha\in\Omega^0(U_\alpha,\Groupg)$. Over 
$U_\alpha\cap U_\beta$, the transition functions 
$c_{\alpha\beta}=\oY_\alpha\circ\oY_\beta^{-1}
:U_\alpha\cap U_\beta\ra \Groupg$ are then locally constant,
since $c_{\alpha\beta}\star 0=0$.
These maps  define a cohomology class
in the \v{C}ech cohomology  $\HC^1(M,\Groupg)$.
The functions $\oY_\alpha$ are well-defined only upto composition
with locally constant functions, but this freedom does not 
change the cohomology class, as before.
Moreover, if we gauge $\oA$ by $\oY\in\Omega^0(M,\Groupg)$, 
the  cocycles associated with $\oA$ and $\oY\star \oA$ are the same 
and we obtain the map 
\begin{align*}
\rho=\rho_{M}^\sg:\Moduliig\ra 
\HC^1(M,\Groupg)\simeq \Hom(\pi_1(M),\Groupg)/\Groupg.
\end{align*}
The argument of Theorem~\ref{thm:classification} may be copied
to show that $\rho_{M}^\sg$ is injective.
This implies that over simply connected domains, every 
impotent germ cord is gauge equivalent to zero. 
\\

If $\{c_{\alpha\beta}:U_\alpha\cap U_\beta\ra \Groupg\}$ 
is a cocycle in $\HC^1(M,\Groupg)$, we can choose
 a smooth partition of unity 
 $\{\lambda_\alpha:U_\alpha\ra \R^{\geq 0}\}_\alpha$ as before
and  define $\oY_\alpha:U_\alpha\ra \Groupg$ by 
$\oY_\alpha(\h,x)=
\sum_{\gamma} \lambda_\gamma(x) c_{\gamma\alpha}(\h)$.
This is well-defined as a germ and we have 
$\oY_\alpha,\oZ_\alpha\in\Omega^0(U_\alpha,\Groupg)$ 
where $\oZ(\cdot,x)=\oY(\cdot,x)^{-1}$.
The germs $\oZ_\alpha\star 0\in\CM(U_\alpha,\algebrag)$ 
match over the intersections $U_\alpha\cap U_\beta$.
They can thus be glued to give some $\oA\in\Cordobaig$
with $\rho_{M}^\sg(\oA)=[\{c_{\alpha\beta}\}]\in 
\HC^1(M,\Group)$. The proof for impotent quantum cords is 
completely similar. The commutativity of the cubic diagram 
is straight-forward from the definitions. 
\end{proof} 

\subsection{Monodromy for impotent cords}
Let us assume that $\oA\in\Cordobaig$ is an impotent germ cord.
Every element $\oY\in\Groupg$ defines the map
\[D\ob:\algebrag=T_{Id}\Groupg\ra T_{\ob}\Groupg.\]
We can use this map to define  a {\emph{connection}} 
$H^\oA\subset TM\times T\Groupg$ by 
\begin{align*}
H_{x,\ob}^\oA=\left\{(\zeta,D\ob(\oA(\zeta)))\ \big|\ 
\zeta\in T_x M\right\}.
\end{align*} 
Since $\oA$ satisfies $d\oA+[\oA,\oA]/2=0$, it follows that 
that $H^\oA$ gives a foliation $\Fol^\oA$ of $M\times \Groupg$ 
and a foliation $\wt\Fol^\oA$ of $\wt{M}\times \Groupg$, 
where $\wt{M}$ denotes the universal cover of $M$. For  
constructing this foliation, the weak 
notions of smoothness on $\algebroidg$ and $\Groupg$ suffice.\\

Fix a point $x\in M$ and a corresponding pre-image 
$\wt{x}\in\wt{M}$ of $x$ under the covering map. Every 
$\theta\in\pi_1(M,x)$ may be lifted to a path $\wt{\theta}$ 
on the leaf of $\wt\Fol^\oA$ which passes through 
$(\wt{x}, Id_{\Groupg})\in\wt{M}\times \Groupg$. The 
{\emph{monodromy map}}
\[\phi=\phi_\oA:\pi_1(M,x)\ra \Groupg,\ \ \ \ \ 
\phi(\theta):=\pi_{\Groupg}(\wt{\theta}(1))\]
is defined by projecting 
$\wt{\theta}(1)\in\wt{M}\times\Groupg$ onto its second factor. 
It follows that 
$\wt{\theta}(1)=(\theta\wt{x},\phi_M(\theta))$, where 
$\theta\wt{x}$ denotes the image of $\wt{x}$ under the deck 
transformation corresponding to $\theta$. Moreover, since 
$(\wt{x},Id_{\Groupg})$ and $(\theta\wt{x},\phi(\theta)$
are on the same leaf of $\wt{\Fol}^\oA$, for every 
$\ob\in\Groupg$ the points $(\wt{x},\ob)$ and 
$(\theta\wt{x},\phi(\theta)\circ \ob)$ are also on the same 
leaf of $\wt{\Fol}^\oA$.\\

Every other pre-image of $x$ under the covering map is of the form
$\wt{y}=\gamma\wt{x}$. If we use $\wt{y}$ instead of $\wt{x}$ 
in defining $\phi$, we obtain another map $\phi'$, with the 
property that the points $(\wt{y},Id_{\Groupg})$ and 
$(\theta\wt{y},\phi'(\theta))
=(\theta\gamma\wt{x},\phi'(\theta))$ are on the same leaf.
On the other hand $(\wt{y},Id_{\Groupg})$ is on the same leaf as 
$(\wt{x},\phi(\gamma)^{-1})$. If follows that
\[\phi'(\theta)=\phi(\gamma)^{-1}\phi(\theta)\phi(\gamma),
\ \ \ \ \forall\ \theta\in\pi_1(M,x).\]
In particular, the conjugacy class of the representation 
$\phi_\oA:\pi_1(M,x)\ra \Groupg$ does not depend on the choice 
of the pre-image $\wt{x}$ of $x$.
On the other hand, if we gauge the germ cord $\oA$ by a section 
$\ob\in\Omega^0(M,\Groupg)$, one can easily show that the 
monodromy map $\phi:\pi_1(M)\ra \Groupg$ changes by 
conjugation by $\ob(x)\in\Groupg$.\\

The above discussion gives a second construction which 
constructs the map
\[\rho_{M}^\sg:\Moduliig\ra \ov\Hom(\pi_1(M),\Groupg)=
\Hom(\pi_1(M),\Groupg)/\Groupg\]
in an explicit way, by assigning the monodromy homomorphism
$\phi_\oA\in\ov\Hom(\pi_1(M),\Groupg)$ to every 
$\oA\in\Moduliig$.
A similar discussion gives an explicit description 
of the correspondence
\[\rho_{M}^\sq:\Moduliiq\ra \ov\Hom(\pi_1(M),\Groupq)=
\Hom(\pi_1(M),\Groupq)/\Groupq\]
by assigning the {\emph{quantum monodromy}} map
$\phi_\qA$ to $\qA\in\Moduliiq$.  \\

There is a third (geometric) way to understand the monodromy map 
as follows. Let us assume that $\oA\in\Cordobaig$ is represented by 
a smooth differential form on $(-\epsilon,\epsilon)\times M$ such 
that $\oA(0,x)=0$ for all $x\in M$. As discussed in the proof of 
Lemma~\ref{lem:locally-trivial}, $\oA$ defines a foliation on 
$(-\epsilon,\epsilon)\times M$. In fact, the $1$-form 
$\oB=\oA-d\h\in\Omega^1((-\epsilon,\epsilon)\times M,\R)$ satisfies
$d\oB=d\oA-\oA'd\h=\oA'\oB$, which implies the Frobenius condition
$\oB d\oB=0$. It thus gives a foliation $\Fol_\oA$ on 
$(-\epsilon,\epsilon)\times M$. Since $\oA(0,x)=0$, $\{0\}\times M$
is one of the leaves of $\Fol_\oA$. Let us fix $x\in M$ and 
$\gamma:[0,1]\ra M$ so that $\gamma(0)=\gamma(1)=x$. The positive 
number $\delta>0$ may be chosen so that $\gamma$ may be lifted
(in a unique way) to a curve 
$\gamma_\h:[0,1]\ra (-\epsilon,\epsilon)\times M$
with image on the leaf passing through $(\h,x)$ so that 
$\gamma_\h(0)=(\h,x)$ and $\pi_M(\gamma_\h(s))=\gamma(s)$ 
for all $s\in[0,1]$. Here $\pi_M:(-\epsilon,\epsilon)\times M\ra M$ 
denotes the projection map over the second factor, while 
the projection map over the first factor is denoted by $\pi_\R$.
It is easy to show that the value 
$\pi_\R(\gamma_\h(1))\in\R$ is independent of the choice of 
$\gamma$ in its homotopy class $[\gamma]\in\pi_1(M,x)$.
Let us denote this value by $\phi_{[\gamma]}(\h)$. 
Since $\{0\}\times M$  is a leaf, $\phi_{[\gamma]}(0)=0$. 
It follows that $\phi_{[\gamma]}$ is smooth and that the map 
$\phi:\pi_1(M,x)\ra\Groupg$ which sends $[\gamma]$ to the 
germ of $\phi_{[\gamma]}$ is a homomorphism. Moreover,
the conjugacy class of this homomorphism remains invariant 
under gauge, and is equal to $\phi_\oA$. This point of view 
brings us very close to the notion of holonomy for the leaves of 
a foliation on $M$.\\

\subsection{Holonomy of leaves}
Let us assume that $\Fol\in\Fcal(M)$ is a transversely oriented 
codimension one foliation on $M$ and that $L$ is a leaf of $\Fol$.
$\Fol$ corresponds to the gauge equivalence class of a 
germ cord $\oA\in\Cordobag$. The restriction $\oA|_{L}$ of 
$\oA$ to $L$ is impotent and we thus obtain a homomorphism
$\phi_{\Fol,L}\in\ov\Hom(\pi_1(L),\Groupg)$. If $x\in L$ is a 
fixed point, using a transverse arc we can also define a 
{\emph{holonomy}} homomorphism $\rho_L:\pi_1(L,x)\ra \Groupg$,
and the conjugacy class of this homomorphism is independent of 
the choice of $x$ and the transverse arc.\\

\begin{prop}\label{prop:holonomy-of-leaves}
For every leaf $L$ of a smooth transversely oriented 
codimension-one foliation $\Fol$ of a smooth manifold $M$, the 
conjugacy classes of the holonomy homomorphism 
$\rho_L:\pi_1(L,x)\ra \Groupg$ and the monodromy 
representation $\phi_{\Fol,L}:\pi_1(L,x)\ra\Groupq$ in 
$\ov\Hom(\pi_1(L),\Groupg)$ are the same.
\end{prop}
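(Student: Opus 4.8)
The plan is to reduce both the classical holonomy $\rho_L$ and the monodromy $\phi_{\Fol,L}$ to one and the same ``transport along leaves'' procedure, by realizing the auxiliary foliation $\Fol_{\oA|_L}$ of the geometric monodromy construction as the pullback of $\Fol$ under a flow tubular neighborhood of $L$. Since both conjugacy classes are independent of all the auxiliary data involved (the basepoint, the transverse arc, and the chosen gauge representative of $\oA$, the last of these by the conjugation--invariance of the monodromy established earlier in this subsection), it will suffice to match the two homomorphisms as honest germs for one convenient set of choices.

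First I would fix a defining $1$-form $a_0$ for $\Fol$ with $da_0=a_1\wedge a_0$ and a transverse vector field $V$ with $a_0(V)=-1$, and take the representative $\oA=\oA_{a_0,V}$, so that $\oA(\h,\cdot)=\Phi_\h^*(a_0)$ where $\Phi_\h=\Phi_\h^V$ is the flow of $V$. Restricting along the leaf via $\imath_L$ produces the impotent germ cord $\oA|_L$, with initial term $\oA|_L(0,\cdot)=a_0|_{TL}=0$. Representing $\oA|_L$ on $(-\epsilon,\epsilon)\times L$, I would form the map $\Psi(\h,y)=\Phi_\h(y)$ into $M$ and carry out the one computation that drives the whole argument: for $w\in T_yL$,
\[
(\Psi^* a_0)(s\partial_\h,w)=s\,a_0(V)+(\Phi_\h^* a_0)(w)=-s+\oA|_L(\h,y)(w),
\]
which shows $\Psi^* a_0=\oA|_L-d\h=\oB$, precisely the $1$-form whose kernel defines $\Fol_{\oA|_L}$ in the geometric construction. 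Hence $\Fol_{\oA|_L}=\Psi^*\Fol$, and the transverse arc $\{(\h,x)\}$ is carried by $\Psi$ onto the flow line $\h\mapsto\Phi_\h(x)$, which I would take as the transverse arc defining $\rho_L$.

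With this identification in place, I would observe that the two transport procedures now literally coincide: lifting a loop $\gamma\subset L$ based at $x$ to the leaf of $\ker\oB$ through $(\h,x)$ and reading off $\gamma_\h(1)=(\phi_{[\gamma]}(\h),x)$ becomes, after applying $\Psi$, exactly the holonomy first-return map of $\Fol$ along the arc $\h\mapsto\Phi_\h(x)$. Thus $\rho_L([\gamma])$ and $\phi_{[\gamma]}=\phi_{\Fol,L}([\gamma])$ agree as germs in $\Groupg$ for every $[\gamma]\in\pi_1(L,x)$, and invariance under the auxiliary choices upgrades this equality of germs to the asserted equality of conjugacy classes in $\ov\Hom(\pi_1(L),\Groupg)$.

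The main obstacle I anticipate is bookkeeping rather than conceptual. Because a leaf $L$ is in general only immersed, $\Psi$ need not be an embedding onto a genuine tubular neighborhood, so the identity $\Fol_{\oA|_L}=\Psi^*\Fol$ and the matching of return maps must be executed leafwise on $(-\epsilon,\epsilon)\times L$, where the geometric monodromy construction already lives, rather than in an honest neighborhood of $L$ inside $M$. In addition, one must verify that the parametrization of the transverse arc by the flow time $\h$ furnishes the same identification of the transverse with $(\R,0)$ on both sides, so that the two germs of diffeomorphisms are genuinely compared in the same coordinate; this is where the normalization $a_0(V)=-1$ earns its keep.
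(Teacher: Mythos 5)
Your proof is correct and follows essentially the same route as the paper: the paper defines $F(\h,x)=(\h,\Phi_\h(x))$ on $\R\times M$ and observes $\oB=F^*(a)$, so that $\Fol_\oA$ is the pullback of the product foliation $\R\times\Fol$, and then matches the return maps of the lifted curves $\gamma_\h$ with the holonomy transport along the flow-line transversal parametrized by $\h$ --- which is precisely your computation $\Psi^*a_0=\oA|_L-d\h$ with $\Psi=\pi_M\circ F$. Your added care about immersed leaves and about the normalization $a_0(V)=-1$ fixing the identification of the transversal with $(\R,0)$ is a welcome refinement of details the paper passes over silently.
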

\begin{proof}
Let us assume that $\Fol\in\Fcal(M)$ is a transversely oriented 
codimension one foliation on $M$ given by a $1$-form 
$a\in\Omega^1(M,\R)$, $V$ is a transverse vector field with 
$\imath_V(a)=-1$ and $\oA=\oA_{a,V}$ is the corresponding cord.
Denote the flow of $V$ by $\Phi_\h$ (thus, $\oA=\Phi_\h^*(a)$).
Associated with $\oA$ we obtain a $1$-form 
$\oB=\oA-d\h=\Phi_\h^*(a)-d\h\in\Omega^1(\R\times M,\R)$ 
and a foliation $\Fol_\oA$ on $\R\times M$ as before.  
If we define $F:\R\times M\ra \R\times M$ by 
$F(\h,x)=(\h,\Phi_\h(x))$, it follows that $\oB=F^*(a)$. 
The foliation $\Fol_\oA$ is thus given as the image of the product 
foliation $\R\times \Fol$ on $\R\times M$ under the map $F$.\\

Suppose that $L$ is a leaf of $\Fol$ and fix  $x\in L$.
Our third description of the monodromy map 
\[\rho_{L}^\sg:\Mod(L,\algebroidg,\Groupg)\ra 
\ov\Hom(\pi_1(L),\Groupg)\]
may be used to describe the homomorphism $\phi=\phi_{\oA|_{L}}$ as 
follows. The foliation associated with $\oA|_L$ 
is the restriction of 
$\Fol_\oA$ to $(-\epsilon,\epsilon)\times L\subset \R\times M$.
For every small value of $\h$, the curve $\gamma_\h$ is mapped
to a curve $\theta_\h=F\circ \gamma_\h$ by $F$. Note, however, that
$\pi_\R\circ \theta_\h=\pi_\R\circ \gamma_\h$. In particular, 
$\phi=\phi_{[\gamma]}\in\Groupg$ may be computed as the return map 
of the curves $\{\theta_\h\}_\h$ for small values of $\h$.
We then observe that $\theta_\h(0)=F(\h,x)=(\h,\Phi_\h(x))$.
Moreover, $\theta_\h(1)=F(\phi(\h),x)=(\phi(\h),\Phi_{\phi(\h)}(x)$.
We can parametrize the transverse arc 
$\{\Phi_\h(x)\ |\ \h\in (-\epsilon,\epsilon)\}$ 
to the foliation $\Fol$ in $M$
by $\h\in  (-\epsilon,\epsilon)$ and then the above considerations
imply that $\rho_L(\gamma)(\h)=\phi(\h)$, completing the proof
of the proposition.
\end{proof}

Suppose that a foliation $\Fol\in\Fcal(M)$ is compatible with 
 a germ cord $\oA\in\Cordobag$ and that 
 $\quot(\oA)=\sum_{m}a_m\h^m$.
Proposition~\ref{prop:holonomy-of-leaves} and 
Proposition~\ref{prop:classification-impotent}
imply that in the Taylor expansion of the holonomy map 
along a closed curve $\gamma$, the initial term is obtained by 
integrating $a_1$ along $\gamma$. This observation generalizes a 
proposition of Ghys in \cite{Ghys-GV-1}, which identifies the 
first derivative of the holonomy map for a foliation $\Fol$ given 
by a $1$-form $a_0\in\Omega^1(M,\R)$ with the integral of 
$a_1=L_V(a_0)$ along the closed curves representing the elements 
of $\pi_1(L,x)$. \\

The above observations suggest the following extension of the 
concept of leaves and their holonomy to singular foliations.

\begin{defn}
Let $\Fol\in\Modulig$ denote the gauge equivalence class of 
$\oA\in\Cordobag$. A {\emph{leaf-like}} map for 
the singular foliation $\Fol$ is a diffeomorphism $f:L\ra M$ from 
a smooth manifold $L$ to $M$ such that $f^*\oA$ is impotent. 
\end{defn}

The definition is clearly independent of the choice of the 
representative $\oA$ for the singular foliation $\Fol$. Associated 
with a leaf-like map $f:L\ra M$, we obtain the conjugacy class 
of a holonomy map
\[\rho_{\Fol,L}=\rho_{L}^\sg(\oA|_{L})
\in\ov\Hom(\pi_1(L),\Groupg)\simeq 
\Mod(L,\algebrag,\Groupg).\]
This notion of holonomy generalizes the usual holonomy map for 
the leaves of non-singular foliations, by 
Proposition~\ref{prop:holonomy-of-leaves}.
This generalization may be compared with other generalizations of 
the notion of holonomy for singular foliations, and in particular 
\cite{Holonomy-Groupoid}.

%\newpage
\section{Classification of germ cords and quantum cords}
\label{sec:classification}
\subsection{A Poincar\'e lemma for cords}
The purpose of this section is to state and prove a classification 
theorem for germ cords and quantum cords up to gauge equivalence.
A survey of approached to classification of foliations may be found
in \cite{Hurder-survey}.
The basis of any such theorem is a local classification lemma, which 
shows that up to gauge equivalence, every cord is locally trivial.
We refer to such  statement as a {\emph{Poincar'e Lemma}}.
 
\begin{lem}[Poincar\'e Lemma]\label{lem:Poincare-Lemma}
Every germ cord $\oA\in \CM(M,\Algebroidg)$ is locally 
gauge equivalent to the trivial cord, i.e. for every $x\in M$ 
there is an open neighborhood 
$U_x\subset M$ of $x$ such that $\oA|_{U_x}=\oY_x\star 0$ for 
some $\oY_x\in\Omega^0(U_x,\Groupoidg)$. 
\end{lem}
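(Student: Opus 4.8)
The plan is to run the same straightening argument used for Lemma~\ref{lem:locally-trivial}, the only essential difference being that the gauge transformation produced will be valued in the groupoid $\Groupoidg$ rather than in the isotropy group $\Groupg$, since the source function $s_\oA$ need not be constant and the initial term of $\oA$ need not vanish. Fix $x\in M$, choose coordinates $(x_1,\ldots,x_n)$ on a chart $U$ around $x$, and pick an open interval $I\subset\R$ containing $s_\oA(x)$ together with a smooth representative $F$ of $\oA$ on $W=I\times U$, so that $\oA=\sum_i f_i\,dx_i$ with each germ-valued coefficient $f_i$ represented by a genuine smooth function $F_i(\h,x')$. Such a representative exists by the weak notion of smoothness for $\Algebroidg$-valued forms, and after shrinking $U$ we may assume $s_\oA(U)\subset I$.

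First I would unwind the Maurer--Cartan equation $d\oA+\oA\oA'=0$ in these coordinates. Exactly as in the proof of Lemma~\ref{lem:locally-trivial}, comparing the coefficient of $dx_i\wedge dx_j$ in $d\oA$ and in $\oA'\oA$ yields
\[\partial_i f_j+f_i\partial_\h f_j=\partial_j f_i+f_j\partial_\h f_i,\qquad i,j\in\{1,\ldots,n\}.\]
This identity is precisely the statement that the vector fields $\xi_i=\partial_i+f_i\partial_\h$ on $W$ commute, since a direct computation gives $[\xi_i,\xi_j]=\bigl(\partial_i f_j-\partial_j f_i+f_i\partial_\h f_j-f_j\partial_\h f_i\bigr)\partial_\h=0$. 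Note that the $\xi_i$ are automatically pointwise linearly independent and, together with $\partial_\h$, frame $TW$, regardless of whether the initial term of $\oA$ vanishes; this is why no nonsingularity hypothesis is needed and why the result cannot be obtained by a bare appeal to the impotent case.

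Next I would straighten these commuting fields. Since $\xi_1,\ldots,\xi_n$ are commuting, pointwise independent, and span a distribution complementary to $\partial_\h$, the simultaneous flow-box theorem produces coordinates $(y_0,y_1,\ldots,y_n)$ on a neighborhood $W'$ of $(s_\oA(x),x)$ with $\partial/\partial y_i=\xi_i$ for $i\geq 1$, so that the integral foliation of the $\xi_i$ is $\{y_0=\mathrm{const}\}$ and $y_0$ may be chosen to restrict to an orientation-preserving coordinate in the $\h$-direction. I would then set $\oY$ equal to $y_0$ on a slab $(-\delta,\delta)\times U_x\subset W'$ shrunk about $x$. Because $dy_0$ annihilates $\xi_1,\ldots,\xi_n$ while $\partial_\h,\xi_1,\ldots,\xi_n$ frame $TW$, one gets $\oY'=\partial_\h\oY\neq 0$, so $\oY(\cdot,x')$ is a germ of an orientation-preserving diffeomorphism based at $s_\oA(x')$ and hence $\oY\in\Omega^0(U_x,\Groupoidg)$. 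Finally $\xi_i\oY=0$ reads $\partial_i\oY+f_i\oY'=0$, whence $-d\oY/\oY'=\sum_i f_i\,dx_i=\oA|_{U_x}$, that is $\oA|_{U_x}=\oY\star 0$.

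The one place requiring genuine care---the main obstacle---is the passage from the flatness of $\oA$ to the integrability of the distribution spanned by $\xi_1,\ldots,\xi_n$, i.e. verifying that the coordinate form of the Maurer--Cartan equation is exactly the commutation (Frobenius) condition; once that is in hand, the existence of $\oY$ is a formal consequence of the flow-box theorem. The remaining bookkeeping---choosing a smooth representative compatible with the weak smoothness on $\Algebroidg$, ensuring $\oY'>0$, and recording that $\oY$ lands in the groupoid $\Groupoidg$ rather than $\Groupg$---is routine and parallels Lemma~\ref{lem:locally-trivial} essentially verbatim.
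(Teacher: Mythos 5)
Your proof is correct and follows essentially the same route as the paper: both integrate the distribution $\ker(\oA-d\h)$ on a neighborhood of the graph of $s_\oA$ in $\R\times M$ and take $\oY$ to be the resulting leaf-constant transverse coordinate, whose restriction to each vertical line is a local diffeomorphism based at $s_\oA(x')$. The only cosmetic difference is that you establish integrability via the commuting frame $\xi_i=\partial_i+f_i\partial_\h$ and the simultaneous flow-box theorem (as in the impotent case, Lemma~\ref{lem:locally-trivial}), whereas the paper applies Frobenius directly to $\oB=\oA-d\h$ and phrases the construction in terms of a foliation box and its plaques.
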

\begin{proof}
Let us assume that $\oA\in  \CM(M,\Algebroidg)$ 
is a germ cord and that
$s_\oA:M\ra \R$ is the corresponding source map. 
One can then represent $\oA$ as the germ of a differential form
$\oA\in\Omega^1(U,\R)$, where $U$ is an open neighborhood of 
\[\Delta_\oA=\{(s_\oA(x),x)\in\R\times M\ |\ x\in M\}.\]
By making $U$ smaller, if necessary, we can assume that 
$\oA$ satisfies $d_M\oA=\oA'\oA$, which is equivalent to 
$d_U(\oA-d\h)=\oA'(\oA-d\h)$. In particular, 
$\oB=\oA-d\h\in \Omega^1(U,\R)$ defines a codimension one
foliation $\Fol^\oA$ on $U$ which is transverse to vertical 
lines $\ell_y=U\cap (\R\times \{y\})$ for all $y\in M$.
For every $u\in U$ let us denote the leaf of $\Fol^\oA$ through 
$u$ by $L_u$. Given $x\in M$ we can choose an open neighborhood 
$U_x\subset M$ of $x$ and an open subset 
$\ell'_x\subset \ell_x\subset\R\times \{x\}$ 
which contains $(s_\oA(x),x)\in\Delta_\oA$,
such that the union of leaves of the foliation $\Fol^\oA$ which 
cut $\ell'_x$, intersect $U\cap (\R\times U_x)$ in a {\emph{box}}
$W$ around $x$. By this, we mean that associated with every 
$y\in U_x$ and every $r\in \ell'_x$ there is a unique 
point $w=w(r,y)\in \ell_y\cap W$ such that the
connected component of $L_w\cap W$ which contains $w$ also
contains $r$. Moreover, every $w\in W$ is of the form $w(r,y)$ 
for some $y\in U_x$ and some $r\in \ell'_x$.
Over the box $W$, we can define 
the real-valued function $\oY_x$ so that the restriction of  
$\oY_x$ to every {\emph{plaque}}
\[P_r=\{w(r,y)\ |\ y\in U_x\}\subset W\ \ \ 
\text{for}\ r\in \ell'_x\]
 is constant. The map $\oY_x$ defines a smooth function 
 $\oY_x:U_x\ra \Groupoidg$ such that
$\oA|_{U_x}=\oY_x\star 0=-d\oY_x/\oY_x'$. 
In particular, every germ cord 
$\oA\in\Cordobag$ is {{locally gauge equivalent to zero}}.
\end{proof}
 
\begin{remark}
Our earlier assumption that every quantum cord is the 
locally trivial is made to replace the above lemma, which is 
only available for germ cords. It follows that the image of 
every germ cord under $\quot$ is automatically locally trivial,
and is thus a quantum cord.
\end{remark}

Considering the full action of the gauge 
groupoids on spaces of cords gives the moduli spaces
\[\Mod(M,\Algebroidg,\Groupoidg)=
\CM(M,\Algebroidg)/\Omega^0(M,\Groupoidg)\ \ \ \text{and}
\ \ \ \Mod(M,\Algebroidq,\Groupoidq)=
\CM(M,\Algebroidq)/\Omega^0(M,\Groupoidq),\]
which are called the {\emph{moduli space of germs cords}} and the 
{\emph{moduli space of quantum cords}}, respectively.
Theorem~\ref{thm:gauge-group} shows that the space 
$\Fcal(M)$ of smooth transversely oriented codimension one 
foliations on $M$ may be identified with a subset of both of 
the moduli spaces. In fact,  Remark~\ref{rmk:group-to-groupoid}
implies that there are  bijections
\[\Ibb^\sg:\Mod(M,\algebroidg,\Groupg)\ra 
\Mod(M,\Algebroidg,\Groupoidg)\ \ \ \text{and}\ \ \ 
\Ibb^\sq:\Mod(M,\algebroidq,\Groupq)\ra 
\Mod(M,\Algebroidq,\Groupoidq),
\]  
which sit in a commutative diagram
\begin{equation}\label{eq:T-is-surjective}
\begin{diagram}
\Fcal(M)&\rInto{\ \ \ \ \ \ } &\Mod(M,\algebroidg,\Groupg)&
\rTo{\ \ \Ibb^\sg\ \ }&
\Mod(M,\Algebroidg,\Groupoidg)\\
\dTo{Id}&&\dTo{\quot}&&\dTo{\quot}\\
\Fcal(M)&\rInto{\ \ \ \ \ \ } &\Mod(M,\algebroidq,\Groupq)&
\rTo{\ \ \Ibb^\sq\ \ }&
\Mod(M,\Algebroidq,\Groupoidq).
\end{diagram}
\end{equation}

\subsection{The \v{C}ech cohomology}
Recall that a map $\oY:M\ra \Groupoidg$, which assigns an arrow 
$\oY(x)$ in $\Groupoidg$ to each point $x\in M$, is smooth if for 
every $x\in M$ one can find an open set $U$ containing $x$ 
so that $\oY|_{U}$ can be represented by a smooth real-valued 
map on $\R\times U$ which is still denoted by $\oY$, so that 
$\oY(x)$ is given by $\oY(\cdot,x):\R\ra\R$ at each point 
$x\in U$. 
The smooth section $\oY:M\ra \Groupoidg$ is 
{\emph{locally constant}} if for every point $x\in M$ 
there is a smooth 
local diffeomorphism  $f$ from a neighborhood of $s_\oY(x)$ 
to a neighborhood of $t_\oY(x)$ and a neighborhood $U$ of $x$ 
so that $\oY(y)$ is given as the germ of $f$ at $s_\oY(y)$ for 
every point $y\in U$. Similarly,
a map $\qY:M\ra \Groupoidq$ from $M$ to the arrows of the 
groupoid $\Groupoidq$ which is given by  
$\qY=\sum_{m=0}^\infty y_m(x)(\h-s(x))^m$
is  {{smooth}} if  the functions $y_m:M\ra \R$ and $s=s_\qY:M\ra \R$ 
are smooth, and is called {\emph{locally constant}} if
$dy_m=(m+1)y_{m+1}ds$ for  all $m\in\Z^{\geq 0}$.
This condition may be described as 
\[d\qY=\sum_{m=0}^\infty \left(dy_m-(m+1)y_{m+1}ds\right)
(\h-s)^m=0.\]
Locally constant maps to $\Groupoidg$ and $\Groupoidq$ are in fact 
smooth maps to $\Gamma^\sq$ and $\Gamma^\sq$, respectively.\\

The spaces of locally constant functions with values in 
$\Gamma^\sg$ and $\Gamma^\sg$ over a manifold $M$
is denoted by $\Omega^0(M,\Gamma^\sg)$ and 
$\Omega^0(M,\Gamma^\sq)$, respectively.
Correspondingly, we can define  the \v{C}ech cohomology groups 
$\HC^1(M,\Gamma^\sg)$ and $\HC^1(M,\Gamma^\sq)$. For this 
purpose, associated with each open cover 
$\Ucal=\{U_\alpha\}_{\alpha}$ of $M$, we can construct 
the spaces of cocycles $\Ccal^1(\Ucal,\Gamma^\sg$ and 
$\Ccal^1(\Ucal,\Gamma^\sq)$, as well as the spaces of coboundaries
$\Bcal^1(\Ucal,\Gamma^\sg)$ and $\Bcal^1(\Ucal,\Gamma^\sq)$.
An element of $\Ccal^1(\Ucal,\Groupoidg)$ consists of a union of 
locally constant maps 
$\cg_{\alpha\beta}:U_\alpha\cap U_\beta\ra \Gamma^\sg$
from $U_\alpha\cap U_\beta$ to the arrows of $\Gamma^\sg$
which satisfy the cocycle condition 
$\cg_{\alpha\beta}\circ\cg_{\beta\gamma}=\cg_{\alpha\gamma}$.
In other words, a cocycle in $\Ccal^1(\Ucal,\Gamma^\sg)$ is a 
continuous groupoid homomorphism from $\Gamma^\Ucal$ 
to $\Gamma^\sg$. The space $\Ccal^1(\Ucal,\Gamma^\sq)$ is defined 
similarly using locally constant maps with values in $\Groupoidq$
and a cocycle in $\Ccal^1(\Ucal,\Gamma^\sq)$ is a continuous 
groupoid homomorphism from $\Gamma^\Ucal$ to $\Gamma^\sq$.
 The space of 
coboundaries $\Bcal^1(\Ucal,\Gamma^\sg)$ consists of a union 
of locally constant maps 
\[\cg_{\alpha\beta}:U_\alpha\cap U_\beta\ra \Gamma^\sg\]
which come from  locally constant maps 
$\{\bg_{\alpha}:U_\alpha\ra \Gamma^\sg\}_\alpha$ 
in the sense that $\cg_{\alpha\beta}=\bg_\alpha\circ \bg_\beta^{-1}$
over the intersections $U_\alpha\cap U_\beta$. 
The coboundaries are the groupoid homomorphisms from 
$\Gamma^\Ucal$ to $\Gamma^\sg$ which are conjugate to 
the trivial homomorphism. Again, we can define 
$\Bcal^1(\Ucal,\Gamma^\sq)$ in a similar way. We then set
\[\HC^1(\Ucal,\Gamma^\sg):=\Ccal^1(\Ucal,\Gamma^\sg)/
\Bcal^1(\Ucal,\Gamma^\sg)\ \ \ \text{and}\ \ \ 
\HC^1(\Ucal,\Gamma^\sq):=\Ccal^1(\Ucal,\Gamma^\sq)/
\Bcal^1(\Ucal,\Gamma^\sq).\]
Considering the refinements of the coverings, we can define 
the limits of $\HC^1(\Ucal,\Gamma^\sg)$ and 
$\HC^1(\Ucal,\Gamma^\sq)$, which are $\HC^1(M,\Gamma^\sg)$ and 
$\HC^1(M,\Gamma^\sq)$, respectively.
The quantization  functor 
$\quot:\Gamma^\sg\ra\Gamma^\sq$  induces the maps 
\[\quot:\Ccal^1(\Ucal,\Gamma^\sg)\ra \Ccal^1(\Ucal,\Gamma^\sq),\ \ \
\quot:\Bcal^1(\Ucal,\Gamma^\sg)\ra \Bcal^1(\Ucal,\Gamma^\sq)\ \ \
\text{and}\ \ \ \quot:\HC^1(M,\Gamma^\sg)\ra \HC^1(M,\Gamma^\sq).\]
It can be shown that 
$\HC^1(M,\Gamma^\sg)\simeq\HC^1(\Ucal,\Gamma^\sg)$ and 
$\HC^1(M,\Gamma^\sg)\simeq \HC^1(\Ucal,\Gamma^\sg)$
if the cover $\Ucal=\{U_\alpha\}_\alpha$ consists only
of contractible open subsets of $M$.\\

\begin{thm}\label{thm:classification}
There are natural one to one correspondences
\begin{align*}
\cg:\Mod(M,\Algebroidg,\Groupoidg)\ra \HC^1(M,\Gamma^\sg)
\ \ \ \text{and}\ \ \ 
\cq:\Mod(M,\Algebroidq,\Groupoidq)\ra\HC^1(M,\Gamma^\sq).
\end{align*}
from the moduli space of germ cords and yje moduli space of quantum 
cords to the \v{C}ech cohomology spaces with 
coefficients in $\Gamma^\sg$ and $\Gamma^\sq$, respectively.
\end{thm}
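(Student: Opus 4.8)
The plan is to run the \v{C}ech-theoretic descent argument already used in the proofs of Theorem~\ref{thm:classification-cords} and Proposition~\ref{prop:classification-impotent}, the only essential new input being the Poincar\'e Lemma (Lemma~\ref{lem:Poincare-Lemma}) for germ cords. First I would define $\cg$. Given $\oA\in\CM(M,\Algebroidg)$, Lemma~\ref{lem:Poincare-Lemma} produces an open cover $\{U_\alpha\}$ of $M$ together with $\oY_\alpha\in\Omega^0(U_\alpha,\Groupoidg)$ satisfying $\oA|_{U_\alpha}=\oY_\alpha\star 0$. On each overlap $U_\alpha\cap U_\beta$ the composition law $\oc\star(\ob\star\oA)=(\ob\circ\oc)\star\oA$ gives $(\oY_\alpha\circ\oY_\beta^{-1})\star 0=0$, so the transition germs $c_{\alpha\beta}=\oY_\alpha\circ\oY_\beta^{-1}$ satisfy $-dc_{\alpha\beta}/c_{\alpha\beta}'=0$, i.e.\ $dc_{\alpha\beta}=0$; they are therefore locally constant and assemble into a continuous groupoid homomorphism $\Gamma_\Ucal\ra\Gamma^\sg$, that is, a cocycle in $\Ccal^1(\Ucal,\Gamma^\sg)$. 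The cocycle identity $c_{\alpha\beta}\circ c_{\beta\gamma}=c_{\alpha\gamma}$ is immediate, yielding a class $\cg(\oA)\in\HC^1(M,\Gamma^\sg)$.

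Next I would verify that $\cg$ descends to the moduli space and is well defined. The $\oY_\alpha$ are determined only up to post-composition with locally constant germs, which alters $\{c_{\alpha\beta}\}$ by a coboundary; replacing $\oA$ by a global gauge transform $\oY\star\oA$ leaves the $c_{\alpha\beta}$ unchanged (exactly as in Theorem~\ref{thm:classification-cords}); and passing to common refinements gives independence of the cover. For injectivity I would copy the computation in Theorem~\ref{thm:classification-cords}: if $\cg(\oA)=\cg(\oB)$ then, after refining, the two cocycles are conjugate through locally constant germs $d_\alpha$, and the local formula $\oX|_{U_\alpha}=\oZ_\alpha^{-1}\circ d_\alpha\circ\oY_\alpha$, with $\oZ_\alpha\star 0=\oB|_{U_\alpha}$, patches to a global $\oX\in\Omega^0(M,\Groupoidg)$ with $\oX\star\oB=\oA$, so $\oA$ and $\oB$ coincide in $\Mod(M,\Algebroidg,\Groupoidg)$.

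For surjectivity, given a cocycle $\{c_{\alpha\beta}:U_\alpha\cap U_\beta\ra\Gamma^\sg\}$ with underlying object maps $s_\alpha:U_\alpha\ra\R$, I would pick a subordinate partition of unity $\{\lambda_\alpha\}$ and set $\oY_\alpha(\h,x)=\sum_\gamma\lambda_\gamma(x)\,c_{\gamma\alpha}(\h,x)$, read as a germ based at the source $s_\alpha(x)$. Since each $c_{\gamma\alpha}$ is orientation preserving we get $\partial_\h\oY_\alpha>0$, so $\oY_\alpha\in\Omega^0(U_\alpha,\Groupoidg)$, and the cocycle relation gives $\oY_\alpha=\oY_\beta\circ c_{\beta\alpha}$ on overlaps; hence the locally trivial cords $\oZ_\alpha\star 0$ with $\oZ_\alpha=\oY_\alpha^{-1}$ agree on overlaps and glue to a germ cord $\oA\in\CM(M,\Algebroidg)$ with $\cg(\oA)=[\{c_{\alpha\beta}\}]$. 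Naturality under pullback along smooth maps follows from $f^*(\oY\star\oA)=(f^*\oY)\star(f^*\oA)$, and compatibility with $\quot$ is immediate, since $\quot(\oY_\alpha\star 0)=\quot(\oY_\alpha)\star 0$ shows that $\quot(\oA)$ is trivialized by $\quot(\oY_\alpha)$ with transition germs $\quot(c_{\alpha\beta})$.

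The quantum statement for $\cq$ is entirely parallel, using the local triviality built into the definition of a quantum cord in place of Lemma~\ref{lem:Poincare-Lemma}: the transition germs are now locally constant in the quantum sense ($d\qY=0$, i.e.\ $dy_m=(m+1)y_{m+1}\,ds$), and the surjective construction again produces cords that are locally of the form $\qY\star 0$, hence genuine quantum cords. The step I expect to demand the most care is surjectivity, because one must track the source and target maps of the groupoid $\Gamma^\sg$ throughout --- this bookkeeping is the genuinely new feature relative to the group-valued Theorem~\ref{thm:classification-cords} --- and confirm both that the convex combination of germs based at the varying points $s_\alpha(x)$ is a well-defined orientation-preserving germ and that the patched $1$-form solves the Maurer--Cartan equation globally.
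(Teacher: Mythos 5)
Your proposal is correct and follows essentially the same route as the paper's own proof: the cocycle $\cg_{\alpha\beta}=\oY_\alpha\circ\oY_\beta^{-1}$ built from the local trivializations supplied by Lemma~\ref{lem:Poincare-Lemma}, the injectivity argument via a glued global gauge element $\oZ_\alpha=\oX_\alpha^{-1}\circ d_\alpha\circ\oY_\alpha$, and the surjectivity construction by a partition-of-unity convex combination of the transition germs (the paper names the convex combination $\oY_\alpha^{-1}$ rather than $\oY_\alpha$, but the construction is identical), with the same remark that local triviality is built into the definition of quantum cords. The points you flag as delicate --- the source/target bookkeeping and the fact that the convex combination of orientation-preserving germs based at $s_\alpha(x)$ has positive $\h$-derivative --- are exactly the ones the paper checks explicitly.
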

\begin{proof}
Given $\oA\in\CM(M,\Algebroidg)$, we can cover $M$ 
with finitely many open 
sets $\{U_\alpha\}_{\alpha}$ so that $\oA|_{U_\alpha}$ is 
gauge equivalent to zero. One can then pick the sections
$\oY_\alpha\in\Omega^0(U_\alpha,\Groupoidg)$ such that 
$\oA|_{U_\alpha}=\oY_\alpha\star 0$. Over the intersections 
$U_\alpha\cap U_\beta$ we obtain 
$\oY_\alpha\star0=\oY_\beta\star 0$, which implies that 
$d(\oY_\alpha\circ\oY_\beta^{-1})=0$, or that 
\[\cg_{\alpha\beta}=\oY_\alpha\circ\oY_\beta^{-1}:
U_\alpha\cap U_\beta\ra \Groupoidg\] is locally 
constant. Note that over $U_\alpha\cap U_\beta$, we have
$s_{\oY_\alpha}=t_{\oY_{\beta}^{-1}}=s_{\oY_\beta}=s_\oA$ and 
the compositions $\cg_{\alpha\beta}=\oY_\alpha\circ\oY_\beta^{-1}$
are thus well-defined.
The above argument gives the smooth locally constant maps
\[\cg_{\alpha\beta}=\oY_\alpha\circ\oY_\beta^{-1}:
U_\alpha\cap U_\beta\ra \Gamma^\sg\ \ \ 
%\text{ i.e.  a \v{C}ech cocycle}\ 
\leadsto\ \cg(\oA)=\left[\{\cg_{\alpha\beta}\}_{\alpha,\beta}\right]
\in \HC^1(M,\Gamma^\sg).\]
 It is not hard to see that this class is well-defined. The role of 
 the covering is not important as we can always
pass to a common refinement for two different given covers.
If $\oA|_{U_\alpha}=\oX_\alpha\star 0=\oY_\alpha\star 0$, then 
$d_\alpha=\oX_\alpha\circ\oY_\alpha^{-1}:U_\alpha\ra \Gamma^\sg$
is locally constant and 
\[d_{\alpha}\circ(\oY_\alpha\circ\oY_\beta^{-1})=
\oX_\alpha\circ\oY_\beta^{-1}=
(\oX_\alpha\circ\oX_\beta^{-1})\circ d_\beta
\ \ \ \ \forall\ \alpha,\beta\]
which implies that the definition of $\cg(\oA)$ is independent of 
the choice of $\{\oY_\alpha\}_\alpha$. The map
\[\cg:\CM(M,\Algebroidg)\ra \HC^1(M,\Gamma^\sg)\]
is thus well-defined.\\

We then explore the equality $\cg(\oA)=\cg(\oB)$ for 
$\oA,\oB\in\CM(M,\Algebroidg)$. Let us choose an open cover 
$\Ucal=\{U_\alpha\}_\alpha$ for $M$ so that 
$\oA|_{U_\alpha}=\oX_\alpha\star 0$ and 
$\oB|_{U_\alpha}=\oY_\alpha\star 0$
for $\oX_\alpha,\oY_\alpha\in\Omega^0(U_\alpha,\Groupoidg)$.
It follows that, after passing to a refinement of the cover 
$\Ucal$, we can assume that  over $U_\alpha\cap U_\beta$ 
\[d_{\alpha}\circ(\oY_\alpha\circ\oY_\beta^{-1})=
(\oX_\alpha\circ\oX_\beta^{-1})\circ d_\beta
\ \ \ \ \text{for locally constant }d_\alpha\in 
\Omega^0(U_\alpha,\Gamma^\sg).\]
If we set 
$\oZ_\alpha=X_\alpha^{-1}\circ d_\alpha\circ Y_\alpha
\in\Omega^0(U_\alpha,\Groupoidg)$, it follows that 
$\oZ_\alpha=\oZ_\beta$ over $U_\alpha\cap U_\beta$. In particular,
$\oZ_\alpha$ is the restriction of a global section 
$\oZ\in\Omega^0(M,\Groupoidg)$ to $U_\alpha$. 
Note that $s_{\oZ}|_{U_\alpha}=s_{\oY_\alpha}=s_{\oA}|_{U_\alpha}$.
From $\oX_\alpha\circ \oZ|_{U_\alpha}=d_\alpha\circ \oY_\alpha$ 
it follows that
\[(\oZ\star\oA)|_{U_\alpha}=\oY_\alpha\star d_\alpha\star 0
=\oY_\alpha\star 0=\oB|_{U_\alpha}\ \ \ 
\Rightarrow\ \ \ \oZ\star \oA=\oB.\] 
If $\oZ\star\oA=\oB$ for some $\oZ\in\Omega^0(M,\Groupoidg)$,
it is also implied from the above argument that $\cg(\oA)=\cg(\oB)$. 
We thus obtain a well-defined injective map
\[\cg:\Mod(M,\Algebroidg,\Groupoidg)=\CM(M,\Algebroidg)/
\Omega^0(M,\Groupoidg)\ra \HC^1(M,\Gamma^\sg).\]

To complete the proof for germ cords, we then need to show that the 
map $\cg$ is surjective. Let us assume that a cocycle 
$\cg_{\alpha\beta}:U_\alpha\cap U_\beta\ra \Gamma^\sg$ 
represents an element of $\HC^1(M,\Gamma^\sg)$. We may further 
assume that $U_\alpha$ are all contractible and that 
$U_\alpha=\cup_{\gamma\neq \alpha}(U_\alpha\cap U_\gamma)$.
It is implied that there are source maps $s_\alpha:U_\alpha\ra \R$
such that 
\[s_{\cg_{\alpha\beta}}=s_\beta|_{U_\alpha\cap U_\beta}
\ \ \ \text{and}\ \ \ 
t_{\cg_{\alpha\beta}}=s_\alpha|_{U_\alpha\cap U_\beta}\ \ \ \ \ \ 
\forall\ \ \alpha,\beta.\] 
Choose a smooth partition of unity 
$\{\lambda_\alpha:U_\alpha\ra \R^{\geq 0}\}$
 subordinate to the cover $\Ucal=\{U_\alpha\}_\alpha$ of 
$M$ and define $\oY_\alpha\in\Omega^0(U_\alpha,\Groupoidg)$
by \[\oY_\alpha^{-1}(\h,x)
=\sum_{\gamma\neq \alpha} \lambda_\gamma(x)\cg_{\gamma\alpha}(\h,x)
\ \ \ \forall\ (\h,x)\in W_\alpha,\]
where $W_\alpha$ denotes an open neighborhood of 
\[\Delta_\alpha
=\{(s_\alpha(x),x)\in\R\times U_\alpha\ |\ x\in U_\alpha\}.\]
Note that the source map for the right-hand-side of the above 
equation stays equal to $s_\alpha$, and the expression on the 
right-hand-side is thus well-defined. Moreover, the derivative 
of $\oY_\alpha^{-1}$ with respect to $\h$ is positive and 
$\oY_\alpha^{-1}(x)\in \Groupoidg$ for all $x$.
In particular, we can define the inverse of this arrow, 
which would be $\oY_\alpha:U_\alpha\ra  \Groupoidg$. 
The target map for $\oY_\alpha$
is $t_{\oY_\alpha}=s_\alpha$, while its source map is 
\[s_{\oY_\alpha}(x)=t_{\oY_\alpha^{-1}}(x)
=\sum_{\gamma}\lambda_\gamma(x)\cg_{\gamma\alpha}(s_\alpha(x),x)=
\sum_{\gamma}\lambda_\gamma(x)s_\gamma(x).\]
This means that the source maps of $\oY_\alpha$ define a 
well-defined map $s:M\ra \R$ and that 
$s_{\oY_\alpha}=s|_{U_\alpha}$ for all $\alpha$. Let us set 
$\oA_\alpha=\oY_\alpha\star 0\in \CM(U_\alpha,\Algebroidg)$.
We then compute
\begin{align*}
&\oY_\beta^{-1}=\sum_\gamma \lambda_\gamma \cg_{\gamma\beta}
=\Big(\sum_\gamma \lambda_\gamma 
\cg_{\gamma\alpha}\Big)\circ \cg_{\beta\alpha}
=\oY_{\alpha}^{-1}\circ \cg_{\beta\gamma}\\
\Rightarrow\ \ \ 
&\oA_\alpha|_{U_\alpha\cap U_\beta}
=\oY_\alpha\star 0=(\cg_{\alpha\beta}\circ \oY_\alpha)\star 0
=\oY_\alpha\star(\cg_{\alpha\beta}\star 0)
=\oA_\beta|_{U_\alpha\cap U_\beta}.
\end{align*}
In particular, $\oA_\alpha\in\CM(U_\alpha,\Groupoidg)$ 
match over the intersections to give a global germ cord 
$\oA\in\CM(M,\Groupoidg)$. It is clear from the construction that
$\cg(\oA)$ is the cocycle we started with. This completes the proof 
for germ cords.\\

The proof for quantum cords is completely similar, as discussed 
below. If follows from the proof for the germ cords that one 
can associate a well-defined \v{C}ech cohomology class 
$\cq(\qA)\in\HC^1(M,\Gamma^\sq)$ to every 
$\qA\in\CM(M,\Algebroidq)$. If $\qA=\quot(\oA)$ then 
$\cq(\qA)=\quot(\cg(\oA))$. This gives a map
\[\cq:\Mod(M,\Algebroidq,\Groupoidq)=\CM(M,\Algebroidq)/
\Omega^0(M,\Groupoidq)\ra \HC^1(M,\Gamma^\sq).\]
We then need to show that $\cq$ is surjective. The key point is 
that given a cocycle 
\[\cq=\left[\{\cq_{\alpha\beta}:U_\alpha\cap U_\beta\ra 
\Gamma^\sq\}_{\alpha,\beta}\right]\in\HC^1(M,\Gamma^\sq)\]
we can construct the sections $\qY_\alpha:U_\alpha\ra \Groupoidq$
using a partition of unity so that $\{\qY_\alpha\star 0\}_\alpha$
match over the inetrsections, and give a global quantum cord 
$\qA\in\Omega^1(M,\Algebroidq)$. The equalities 
$\qA|_{U_\alpha}=\qY_\alpha\star 0$ imply that $\qA$ is locally 
trivial and hence an element of $\CM(M,\Algebroidq)$.
\end{proof}

\subsection{The classifying spaces}
Theorem~\ref{thm:classification} implies that the gauge equivalence 
classes of germ cords on $M$ are in correspondence with equivalence 
classes of Haefliger structures, with values in $\Gamma^\sg$. On the
other hand, the commutative diagram of 
Equation~\ref{eq:T-is-surjective} suggests that the space of 
equivalence classes of Haeflieger $\Gamma^\sq$-structures also has 
all rights to be studied as the genralization of space of 
foliations. In particular, the concordance classes of 
$\Gamma^\sg$ and $\Gamma^\sq$ 
structures on $M$ are in correspondence with the homotopy classes 
of maps from $M$ to the classifying spaces $B\Gamma^\sg$ and 
$B\Gamma^\sq$ associated with the groupoids $\Gamma^\sg$ and 
$\Gamma^\sq$, respectively.
\\

\begin{defn}\label{defn:concordance}
The germ cords $\oA_0,\oA_1\in \CM(M,\Algebroidg)$ are called 
{\emph{concordant}} if there is a {\emph{germ concord}} 
$\oA\in\CM(M\times[0,1],\Algebroidg)$ with 
$\oA|_{M\times\{i\}}=\oA_i$ for $i=0,1$. Similarly, 
$\qA_0,\qA_1\in \CM(M,\Algebroidq)$ 
are  {\emph{concordant}} if there is a {\emph{quantum concord}} 
$\qA\in\CM(M\times[0,1],\Algebroidq)$ with 
$\qA|_{M\times\{i\}}=\qA_i$ for $i=0,1$. We write 
$\oA_0\sim_\sg\oA_1$ if $\oA_0,\oA_1\in \CM(M,\Algebroidg)$ 
are concordant and write $\qA_0\sim_\sq\qA_1$ if 
$\qA_0,\qA_1\in \CM(M,\Algebroidq)$ are concordant. 
\end{defn}

If $\oA_0$ and $\oA_1$ are concordant, we can choose the (germ) 
concord $\oA$ connecting them so that $\imath_s^*\oA=\oA_0$
for $s\in[0,\epsilon)$ and $\imath_s^*\oA=\oA_1$
for $s\in(1-\epsilon,1]$. This is needed when we  glue 
the concords to show that concordance is an equivalence 
relation. \\

Our first observation addresses the compatibility of the concept of
concordance with the action of the gauge group on germ cords and 
quantum cords.

\begin{prop}\label{prop:gauge-equivalence-implies-homotopy}
If $\oA_0,\oA_1\in\CM(M,\Algebroidg)$ (or in 
$\CM(M,\Algebroidq)$) are gauge equivalent, then 
they are concordant. 
\end{prop}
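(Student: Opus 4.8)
The plan is to realize the gauge equivalence as an explicit concordance, by gauging a pulled-back cord along a family of gauge transformations interpolating between the identity and the given gauge. Suppose $\oA_1=\oY\star\oA_0$ for some $\oY\in\Omega^0(M,\Groupoidg)$, so that $t_\oY=s_{\oA_0}$ and $s_\oY=s_{\oA_1}$. Let $\pi\colon M\times[0,1]\to M$ be the projection onto the first factor. The pullback $\pi^*\oA_0$ is again a germ cord in $\CM(M\times[0,1],\Algebroidg)$, since the Maurer--Cartan equation $d\oA+\oA\oA'=0$ is preserved under pullback, and it restricts to $\oA_0$ on every slice $M\times\{s\}$.

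First I would construct a smooth interpolating gauge transformation $\tilde\oY\in\Omega^0(M\times[0,1],\Groupoidg)$ which equals the identity arrow $\mathrm{Id}$ (based at the object $s_{\oA_0}(x)$) for $s$ near $0$, equals $\oY$ for $s$ near $1$, and crucially satisfies $t_{\tilde\oY}=s_{\oA_0}\circ\pi$ for \emph{every} value of $s$; this last condition is exactly what is needed for $\tilde\oY\star\pi^*\oA_0$ to be defined slice by slice. Granting such a $\tilde\oY$, I would set $\oA:=\tilde\oY\star\pi^*\oA_0\in\Omega^1(M\times[0,1],\Algebroidg)$. By the gauge invariance of the curvature established just after Equation~(\ref{eq:gauge-action}), namely $F_{\tilde\oY\star\pi^*\oA_0}=(F_{\pi^*\oA_0}\circ\tilde\oY)/\tilde\oY'=0$, the form $\oA$ is a germ cord. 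Because restricting to a slice is pullback along the inclusion $\iota_{s}\colon M\hookrightarrow M\times[0,1]$, and pullback commutes with the gauge action via $f^*(\oY\star\oA)=(f^*\oY)\star(f^*\oA)$, one gets $\oA|_{M\times\{0\}}=\mathrm{Id}\star\oA_0=\oA_0$ and $\oA|_{M\times\{1\}}=\oY\star\oA_0=\oA_1$, exhibiting $\oA$ as the desired germ concord; the collar condition is inherited from the collar built into $\tilde\oY$.

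The heart of the argument, and the step I expect to be the main obstacle, is the construction of $\tilde\oY$ subject to the target constraint $t_{\tilde\oY}=s_{\oA_0}$ throughout. Represent $\oY$ by a smooth function $\overline\oY(\h,x)$ with $\partial_\h\overline\oY>0$ and $\overline\oY(s_\oY(x),x)=s_{\oA_0}(x)$, and pass to the centered germ $g_x(u):=\overline\oY(u+s_\oY(x),x)-s_{\oA_0}(x)$, so that $g_x(0)=0$ and $g_x'(0)>0$. I would deform $g_x$ to the identity germ in two smooth stages that meet at the linear germ $g_x'(0)\,u$: (i) the rescaling homotopy $g_{x,\tau}(u):=\tfrac1\tau g_x(\tau u)$ for $\tau\in(0,1]$, which extends smoothly to $\tau=0$ with $g_{x,0}(u)=g_x'(0)u$ --- this is Hadamard's lemma, writing $g_x(v)=g_x'(0)v+v^2h_x(v)$ so that $g_{x,\tau}(u)=g_x'(0)u+\tau u^2 h_x(\tau u)$ --- and whose slope $g_x'(\tau u)>0$; and (ii) the linear homotopy $\bigl(1+t(g_x'(0)-1)\bigr)u$, $t\in[0,1]$, whose slope is a convex combination of $1$ and $g_x'(0)$, hence positive. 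Concatenating (i) and (ii), reparametrizing the $[0,1]$-variable by a smooth step function with vanishing derivatives at the ends to enforce the collar, sliding the basepoint from $s_\oY(x)$ to $s_{\oA_0}(x)$ along $c_s(x)=\beta(s)s_\oY(x)+(1-\beta(s))s_{\oA_0}(x)$, and translating back by $+\,s_{\oA_0}(x)$, produces a family each of whose members is an orientation-preserving germ with target $s_{\oA_0}(x)$, given by a formula jointly smooth in $(s,\h,x)$; this is the required $\tilde\oY$, and it is a legitimate smooth section to $\Groupoidg$ in the weak sense because it is locally represented by this explicit smooth real-valued function.

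Finally, the quantum case follows from the identical scheme applied to $\qA_1=\qY\star\qA_0$, taking $\tilde\qY:=\quot(\tilde\oY)$ (equivalently, running the same rescaling and linear interpolations on the power-series coefficients). The one extra point is that the resulting concord $\qA=\tilde\qY\star\pi^*\qA_0$ must be locally trivial to qualify as a quantum cord: this holds because $\pi^*\qA_0$ is locally trivial, being the pullback along $\pi$ of the locally trivial quantum cord $\qA_0$ (locally $\qY_0\star 0$ pulls back to $(\pi^*\qY_0)\star 0$), and the gauge action preserves local triviality.
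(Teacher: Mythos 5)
Your proof is correct and follows the same overall strategy as the paper's: pull $\oA_0$ back to $M\times[0,1]$ and gauge it by a family of transformations interpolating between the identity and $\oY$, using $F_{\oY\star\oA}=(F_\oA\circ\oY)/\oY'$ and the compatibility of restriction with the gauge action to conclude. The difference lies in how the interpolating family is built. The paper uses the one-line formula $\oZ(\h,y,s)=\h+\lambda(s)\bigl(\oY(\h,y)-\h\bigr)$ for a smooth cutoff $\lambda$ (the printed factor $e^{s/(1-s)}$ is evidently a typo, as it equals $1$ at $s=0$ and diverges as $s\to1$), and only verifies that the $\h$-derivative stays positive. Your construction additionally enforces $t_{\tilde\oY_s}=s_{\oA_0}$ for all intermediate $s$, which is a genuine point: the straight-line homotopy moves the target of the arrow from $s_\oY(y)$ toward $s_{\oA_0}(y)$, so for $0<s<1$ the composition $\oA_0\circ\oZ_s$ is not literally defined at the level of germs unless the basepoints are managed as you do. Your centered two-stage homotopy with the basepoint slide handles this cleanly, at the cost of some machinery that is not strictly needed --- the single convex combination of the \emph{centered} germs, $u\mapsto(1-\lambda(s))u+\lambda(s)g_x(u)$, already fixes $0$ and has positive derivative everywhere, so the Hadamard rescaling stage can be dispensed with. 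Your closing remarks on the collar and on the preservation of local triviality in the quantum case (needed for the concord to qualify as a quantum cord) are correct and are points the paper leaves implicit.
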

\begin{proof}
Let us assume that $\oA_0,\oA_1\in\CM(M,\Algebroidg)$ and 
$\oA_1=\oY\star\oA_0$. For every point 
$x\in M$ the gauge function $\oY$ is given by $\oY(\h,y)\in\R$ for 
$y\in U_x\subset M$ and $\h\in \R$ 
for  a sufficiently small neighborhood $U_x$ of $x\in M$. 
%and $\epsilon$ sufficiently small. 
We can then define
\[\oZ(\h,y,s)=\h+e^{\frac{s}{1-s}}(\oY(\h,y)-\h),\ \ \ \ 
\forall\ y\in U_x\subset  M,\ s\in [0,1],\ \h\in \R.\]
For every $(y,s)\in U_x\times [0,1]$, the above definition gives 
a function $\oZ(y,s)=\oc(\cdot,y,s)$ from $\R$ to $\R$, and 
it is not hard to show that $\oZ(y,s)$ is a 
diffeomorphism if $\oY(y)=\oY(\cdot,y)$ is a  diffeomorphism, e.g.
since its $\h$-derivative is positive. From this construction, 
we obtain a gauge element 
$\oZ\in\Omega^0(M\times[0,1],\Groupoidg)$. 
The restriction of $\oZ$ to $M\times\{0\}$ is the identity map, 
while the restriction of $\oZ$ to $M\times\{1\}$ is $\ob$. 
Let us abuse the notation and denote the pull-back of $\oA_0$
on $M\times[0,1]$ (using the projection map over the first factor) 
by $\oA_0$. Since $\oA_0\in\CM(M\times[0,1],\Algebroidg)$ it 
follows that 
\[\oA:=\oc\star\oA_0\in\CM(M\times [0,1],\Algebroidg),
\ \ \ \imath_0^*\oA=\oA_0\ \ \ \text{and}\ \ \ 
\imath_1^*\oA=\ob\star\oA_0=\oA_1.\]  
This completes the proof of the proposition for germ cords. 
The proof for quantum cords is completely similar.
\end{proof}

 The concordance classes of germ cords and
quantum cords form
\begin{align*}
\Ccal(M,\Algebroidg)=\CM(M,\Algebroidg)/\sim_\sg\ \ \ \ 
\text{and}\ \ \ \   
\Ccal(M,\Algebroidq)=\CM(M,\Algebroidq)/\sim,
\end{align*}
which are called  the {\emph{germ concordia}} and the 
{\emph{quantum concordia}} of $M$ respectively. If 
$f:M_1\ra M_2$ is a smooth map, we obtain the 
induced pull-back maps 
\begin{align*}
f^*:\Ccal(M_2,\Algebroidg)\ra 
\Ccal(M_1,\Algebroidg)\ \ \ \text{and}\ \ \ 
f^*:\Ccal(M_2,\Algebroidq)\ra 
\Ccal(M_1,\Algebroidq) 
\end{align*}
It follows from 
Proposition~\ref{prop:gauge-equivalence-implies-homotopy}
that the germ and quantum concordia are quotients of the 
moduli spaces $\Mod(M,\Algebroidg,\Groupoidg)$ and 
$\Mod(M,\Algebroidq,\Groupoidq)$. Correspondingly, there
are quotient maps
\begin{align*}
\pi_\sg:\Mod(M,\Algebroidg,\Groupoidg)\ra \Ccal(M,\Algebroidg)
\ \ \ \ \text{and}\ \ \ \ 
\pi_\sq:\Mod(M,\Algebroidq,\Groupoidq)\ra \Ccal(M,\Algebroidq).
\end{align*}

The converse of 
Proposition~\ref{prop:gauge-equivalence-implies-homotopy}
is true in some cases. The concept of a cord may be 
defined using the flat $1$-forms with values in $C^\infty(S^1)$,
which is the Lie algebra associated with $\Diff^+(S^1)$.
The corresponding cords are called the {\emph{circle cords}}.
The space of circle cords is denoted by $\CordobaS$. We take the 
following proposition as a justification for the relation between 
$\Mod(M,\Algebroidg,\Groupoidg)$ and $\Ccal(M,\Algebroidg)$.

\begin{prop}\label{prop:concordant-implies-gauge-equivalent}
If $\oA_0,\oA_1\in\CordobaS$ are concordant then they are 
gauge equivalent. In particular, there is an injective map
\[\rho_{S^1,M}:\ConcordiaS\lra 
\Hom(\pi_1(M),\Diff^+(S^1))/\Diff^+(S^1),\]
whose image is identified with the kernel of the Euler class 
obtstruction map
\[\Euler:\Hom(\pi_1(M),\Diff^+(S^1))/\Diff^+(S^1)\ra 
H^2(\pi_1(M),\Z).\] 
\end{prop}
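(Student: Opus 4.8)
The plan is to run the $S^1$-analogue of Theorem~\ref{thm:classification-cords} and then read off the two assertions from the resulting classification map. First I would establish a Poincar\'e lemma for circle cords: although $\CordobaS=\CM(M,C^\infty(S^1))$ is defined as the full space of Maurer--Cartan elements (with no local triviality built in), every circle cord is in fact locally gauge equivalent to zero. This is exactly the point where the circle differs from the line. Repeating the computation of Lemma~\ref{lem:locally-trivial}, flatness $d\oA=\oA'\oA$ forces the vector fields $\xi_i=\partial_i+f_i\partial_\h$ to commute, and to straighten them simultaneously one integrates their flows; since $S^1$ is compact, every $C^\infty(S^1)$-valued vector field is complete, so no trajectory escapes in finite time and the straightening succeeds on a full neighborhood. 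This is precisely what fails for $\Diff^+(\R)$ (the example $\oA=(1+\h^2)ds$ in Section~\ref{sec:flat-connections} has incomplete flow). Granting local triviality, the construction and the injectivity argument of Theorem~\ref{thm:classification-cords} carry over verbatim with $\Diff^+(S^1)$ in place of $\Diff^+(\R)$: trivialize $\oA|_{U_\alpha}=\oY_\alpha\star 0$, form the locally constant transition cocycle $c_{\alpha\beta}=\oY_\alpha\circ\oY_\beta^{-1}$, and obtain a well-defined injective map
\[
\rho_{S^1,M}\colon \Mod(M,C^\infty(S^1),\Diff^+(S^1))\lra
\HC^1(M,\Diff^+(S^1))\simeq\Hom(\pi_1(M),\Diff^+(S^1))/\Diff^+(S^1),
\]
together with its naturality square in $M$.

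Next I would deduce that concordance coincides with gauge equivalence, so that $\ConcordiaS$ is the same quotient on which $\rho_{S^1,M}$ is defined. One direction is Proposition~\ref{prop:gauge-equivalence-implies-homotopy}. For the converse, let $\oA\in\CM(M\times[0,1],C^\infty(S^1))$ be a concordance between $\oA_0$ and $\oA_1$, and let $j_i\colon M\to M\times[0,1]$ be the two end inclusions. Since $j_0$ and $j_1$ are homotopic and the functor $\Hom(\pi_1(-),\Diff^+(S^1))/\Diff^+(S^1)$ is homotopy invariant (using $\pi_1(M\times[0,1])\cong\pi_1(M)$), naturality gives
\[
\rho_{S^1,M}(\oA_0)=j_0^*\rho_{S^1,M\times[0,1]}(\oA)
=j_1^*\rho_{S^1,M\times[0,1]}(\oA)=\rho_{S^1,M}(\oA_1),
\]
and injectivity forces $\oA_0$ and $\oA_1$ to be gauge equivalent. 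Hence $\rho_{S^1,M}$ descends to an injective map out of $\ConcordiaS$, which is the first claim.

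It then remains to identify the image with $\ker\Euler$, for which I would use the central extension
\[
0\lra \Z\lra \widetilde{\Diff^+(S^1)}\lra \Diff^+(S^1)\lra 1,
\]
where $\widetilde{\Diff^+(S^1)}=\{f\in\Diff^+(\R)\mid f(\h+1)=f(\h)+1\}$ is the universal cover, whose extension class pulled back along $\phi\colon\pi_1(M)\to\Diff^+(S^1)$ is by definition $\Euler(\phi)\in H^2(\pi_1(M),\Z)$. The crucial observation is that $\Diff^+(S^1)$ and $\widetilde{\Diff^+(S^1)}$ share the Lie algebra $C^\infty(S^1)$ of periodic vector fields, and that for $\oY$ valued in $\widetilde{\Diff^+(S^1)}\subset\Diff^+(\R)$ the form $\oY\star 0=-d\oY/\oY'$ is automatically periodic, hence a genuine circle cord. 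For image $\subseteq\ker\Euler$: the trivializations $\oY_\alpha$ of a circle cord lift over contractible $U_\alpha$ to $\widetilde{\oY}_\alpha$ in $\widetilde{\Diff^+(S^1)}$, and $\widetilde{c}_{\alpha\beta}=\widetilde{\oY}_\alpha\circ\widetilde{\oY}_\beta^{-1}$ is then a lift of the cocycle satisfying the cocycle identity on the nose, witnessing $\Euler=0$. For $\ker\Euler\subseteq$ image: if $\Euler(\phi)=0$ the locally constant $\Diff^+(S^1)$-cocycle lifts to a locally constant $\widetilde{\Diff^+(S^1)}$-cocycle $\widetilde{c}_{\alpha\beta}$, and the partition-of-unity averaging of Theorem~\ref{thm:classification-cords}, applied inside $\Diff^+(\R)$ to $\oY_\alpha(\h,x)=\sum_\gamma\lambda_\gamma(x)\widetilde{c}_{\gamma\alpha}(\h)$, produces translation-equivariant diffeomorphisms (because $\sum_\gamma\lambda_\gamma=1$), hence circle cords $\oY_\alpha\star 0$ gluing to one realizing $\phi$.

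The main obstacle is this last step: one must verify that the obstruction to lifting the locally constant $\Diff^+(S^1)$-valued cocycle to $\widetilde{\Diff^+(S^1)}$ is \emph{exactly} the Euler class of the statement, by comparing the $\Z$-valued \v{C}ech $2$-cocycle $\widetilde{c}_{\alpha\beta}\widetilde{c}_{\beta\gamma}\widetilde{c}_{\gamma\alpha}$ with the group-cohomology description of $\Euler$ via the central extension, and that the averaging construction descends correctly from $\Diff^+(\R)$ to $\Diff^+(S^1)$. Everything else is a transcription of the arguments already given for $\Diff^+(\R)$.
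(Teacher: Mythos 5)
Your proposal is correct, but it reaches the key implication (concordance implies gauge equivalence) by a genuinely different route than the paper. The paper argues directly: writing the concord as $\oA(\h,x,s)=\oA_s(\h,x)+\oB_s(\h,x)\,ds$, flatness gives $\partial_s\oA_s=\nabla_{\oA_s}\oB_s$; one then integrates the time-dependent generator $-\oB_s$ to a path $\oY_s\in\Omega^0(M,\Diff^+(S^1))$ of gauge transformations starting at the identity (compactness of $S^1$ guarantees this flow is everywhere defined), computes $\partial_s(\oY_s\star\oA_0)=\nabla_{\oY_s\star\oA_0}\oB_s$, and concludes $\oY_s\star\oA_0=\oA_s$ for all $s$ by uniqueness of solutions of this ODE, so $\oA_1=\oY_1\star\oA_0$ with an explicit gauge transformation. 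You instead go through the classification map: a Poincar\'e lemma for circle cords, injectivity of $\rho_{S^1,M\times[0,1]}$, and homotopy invariance of $\Hom(\pi_1(-),\Diff^+(S^1))/\Diff^+(S^1)$ applied to the two end inclusions. Both are valid; yours is softer and gives naturality for free, while the paper's produces the gauge transformation explicitly and isolates exactly where compactness of $S^1$ enters (completeness of the flow in the concordance direction --- the step that fails for $\Diff^+(\R)$, where your route would instead break at local triviality). Note that your argument leans on a Poincar\'e lemma for circle cords that the paper never states; your completeness argument for it is sound, and some such statement is in any case needed to make sense of the ``in particular'' clause, since $\CordobaS$ is defined as the full Maurer--Cartan space rather than the locally trivial one. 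On the Euler-class identification both treatments are sketches; your cocycle-lifting outline through the central extension of $\Diff^+(S^1)$ by $\Z$ is the standard argument and is at least as detailed as the paper's one-line appeal to horizontal foliations of $M\times S^1$.
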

\begin{proof}
Let $\oA_0,\oA_1\in \CordobaS$ be concordant 
and let $\oA\in\CM(M\times[0,1],C^\infty(\R))$ be a concord 
connecting $\oA_0$ to $\oA_1$. The concord $\oA$ is then 
given as 
\[\oA(\h,x,s)=\oA_s(\h,x)+\oB_s(\h,x)ds,\ \ \ 
\oA_s\in\CM(M,C^\infty(S^1)),\ \oB_s\in\Omega^0(M,C^\infty(S^1)).\]
In fact, if $\imath_s:M\ra M\times\{s\}\subset M\times[0,1]$ denotes
the inclusion map for each $s\in[0,1]$, the induced $1$-form 
$\oA_s=\imath_s^*\oA\in\Omega^1(M,C^\infty(S^1))$ is  a circle cord, 
since it is the pull-back of a circle cord. The equation 
$d\oA=\oA'\oA$ further implies that
\begin{align*}
&0=d\oA_s+\big(d\oB_s-\partial_s\oA_s\big)ds
+\oA_s\oA_s'+(\oA_s\oB_s'-\oB_s\oA_s')ds\ \ 
\Rightarrow\ \ \partial_s\oA_s=\nabla_{\oA_s}\oB_s,
\end{align*}
where $\partial_s$ denotes the differentiation with respect to $s$.
We can then use  the exponential map 
$\exp:C^\infty(S^1)\ra \Diff^+(S^1)$
to define $\oY\in\Omega^0(M\times [0,1],\Diff^+(S^1))$ so that 
$\oY_0(x)=\oY(\cdot,x,0)=Id_{S^1}$ and 
\[\frac{\partial \oY}{\partial s}(\h,x,s)=-\oB_s(\h,x)\ \ \ \ 
\forall\ (\h,x,s)\in S^1\times M\times [0,1].\]
The compactness of $S^1\times M\times [0,1]$ implies that 
$\oY$ is everywhere defined, and is uniquely determined by the 
above requirements. Let us define $\wt{\oA}_s=\oY_s\star \oA_0$.
In particular, $\wt\oA_0=\oA_0$. Fix $s\in[0,1]$ and 
set $\oZ_{\epsilon}=\oY_s^{-1}\circ\oY_{s+\epsilon}$.
It is then clear that 
$\wt\oA_{s+\epsilon}=\oZ_{\epsilon}\star \wt\oA_s$ 
and we can compute
\begin{align*}
\partial_s\wt\oA_s
&=\lim_{\epsilon\ra 0}
\frac{\wt\oA_s\circ\oZ_{s}-\oZ_{s}'\wt\oA_s-
d\oZ_{\epsilon}}{\epsilon\oZ_{\epsilon'}}\\
&=\wt\oA_s'\left(\partial_\epsilon\oZ_\epsilon|_{\epsilon=0}\right)
-d\left(\partial_\epsilon\oZ_\epsilon|_{\epsilon=0}\right)
-\wt\oA_s\left(\partial_\epsilon\oZ_\epsilon'|_{\epsilon=0}\right)\\
&=-\wt\oA_s'\oB_s+\wt\oA_s\oB_s'+d\oB_s
=\nabla_{\wt\oA_s}\oB_s.
\end{align*}
It follows from the two equations 
$\partial_s\oA_s=\nabla_{\oA_s}\oB_s$ and 
$\partial_s\wt\oA_s=\nabla_{\wt\oA_s}\oB_s$ and the initial time 
equality $\oA_0=\wt\oA_0$ that $\oA_s=\wt\oA_s$ for all 
$s\in[0,1]$. In particular, $\oA_1=\wt\oA_1=\oY_1\star \oA_0$ is 
gauge equivalent to $\oA_0$.\\

Now that the notions of concordance and gauge equivalence are
the same for circle cords, the last part of the proposition follows
from the proof of Theorem~\ref{thm:classification} and an standard 
observation that the kernel of the obstruction map $\Euler$ is 
identified with {\emph{horizontal}} foliations of $M\times S^1$.
\end{proof}

Since $B\Gamma^\sg$ naturally (and classically) arises from 
the study of Haefliger structures 
on a manifold $M$ up to concordance, some very interesting results 
are already available in the literature about the topology of 
$B\Gamma^\sg$. Mather and Thurston proved that $B\Gamma^\sg$ 
is $2$-connected \cite{Mather-on-Haefliger},
\cite{Thurston-classification}. Moreover, Thurston showed 
\cite{Thurston-GV} that $H_i(B\Gamma^\sg;\Z)=0$ for $i=0,1,2$ 
while there is  a surjection \[\GV:H_3(B\Gamma^\sg;\Z)\lra \R\]
given by the Godbillon-Vey invariant, c.f. \cite{Godbillon-Vey} and 
\cite{Bott-Haefliger}. This result is in complete contrast with the case of 
homogenous foliations~\cite{GV-for-homogeneous} and foliations 
admitting a projective transversal structure~\cite{Rigid-GV}. The 
reader is referred to \cite{Hurder-GV} and \cite{Ghys-GV-1,Ghys-GV-2}
for more on the construction of the Godbillon-Vey invariants.\\

It is also interesting to study the fiber of the projection map 
from $\Mod(M,\Algebroidg,\Groupoidg)$ over a point of 
$[M,B\Gamma^\sg]$. The structure of the intersection of 
this fiber with $\Fcal(M)$, especially in dimension $3$, 
is studied in a number of interesting papers. Near a taut 
foliation (of a closed $3$-manifold), the topology of this 
intersection is studied in \cite{Local-Topology}. 
Among more recent results, one can mention the work of 
Eynard-Bontemps~\cite{Eynard}, where she shows that  any 
two non-singular foliations on a $3$-manifold $M$ which correspond
to the same point of $[M,B\Gamma^\sg]$ are in the same 
connected component of $\Fcal(M)$, meaning that the 
corresponding integrable plane fields are in the same connected 
component among all integrable plane fields on $M$. It is 
still open whether the corresponding connected component 
is path connected or not.\\

Nevertheless, the relation between $\Gamma^\sq$ and $\Fol(M)$ is not 
studied in the literature, and the topology of $B\Gamma^\sq$ 
is not known. The homomorphism $\quot:\Gamma^\sg\ra \Gamma^\sq$ 
gives a continuous map $\quot:B\Gamma^\sg\ra B\Gamma^\sq$. 
Correspondingly, there is a composition map 
\[\quot:[M,B\Gamma^\sg]\ra [M,B\Gamma^\sq],\ \ \ \ \ 
\quot[f]=[\quot\circ f]\ \ \forall\ [f]\in[M,B\Gamma^\sg].\]
Let $\HC^1(M,\Gamma^\sg)/\sim_\sg$ denote the space 
of concordance classes of $\Gamma^\sg$-Haefliger structures on a 
manifold $M$. Similarly, let $\HC^1(M,\Gamma^\sq)/\sim_\sq$ 
denote the space of concordance classes of $\Gamma^\sq$-Haefliger 
structures on $M$. Under the identification of 
$\HC^1(M,\Gamma^\sg)/\sim_\sg$ with $[M,B\Gamma^\sg]$
and identification of $\HC^1(M,\Gamma^\sq)/\sim_\sq$ with 
$[M,B\Gamma^\sq]$, the map $\quot$ defined above is identified 
with the map induced by the map
$\quot$ which appears in the last column of the diagram in 
Equation~\ref{eq:T-is-surjective}.

%\newpage
\section{The cohomology theory of cords}
\label{sec:cohomology}
\subsection{The cohomology groups}
The algebroids $\Algebroidq$ and $\Algebroidq$ 
correspond to differential graded Lie algebras and attached cohomology 
theories which control the deformations of these algebroids, 
c.f.~\cite{Cohomology-Algebroid}.
Given a  cord $\oA\in\CM(M,\Algebroidg)$, let 
$\Omega^i_{s_\oA}(M,\Algebroidg)$ denote the subspace of 
$\Omega^i(M,\Algebroidg)$ which consists of section $\oE$ with 
$s_\oE=s_\oA$. For $\qA\in\CM(M,\Algebroidq)$ we may define 
$\Omega^i_{s_\qA}(M,\Algebroidq)$ in a similar way.
Define the  twisted differential 
\begin{align*}
&\nabla_\oA:\Omega^i_{s_\oA}(M,\Algebroidg)\lra 
\Omega^{i+1}(M,\Algebroidg)\ \ \ \ \ 
\nabla_\oA(\oB):=d\oB+[\oA,\oB].
\end{align*}
The new differential satisfies $\nabla_\oA\circ\nabla_{\oA}=0$ and 
may be used to define the cohomology groups 
$\HG^i(M,\oA)$. Similarly, we can define the cohomology
groups $\HQ^i(M,\qA)$ for $\qA\in\CM(M,\Algebroidq)$. We study 
the basic properties of these cohomology groups in this section.\\

The group of diffeomorphisms of $M$ acts on all objects considered 
above in a compatible way. Given an element $\phi:M\ra M$ in $\Diff^+(M)$ 
and $\oA,\oB\in\Omega^*(M,\Algebroidg)$, 
we have $\phi^*\oA, \phi^*\oB\in \Omega^*(M,\Algebroidg)$ and 
$[\phi^*\oA,\phi^*\oB]=\phi^*[\oA,\oB]$. 
Thus $\phi^*\CM(M,\Algebroidg)=\CM(M,\Algebroidg)$, i.e. 
the pull-back of a germ cord (respectively, a quantum cord) is 
another germ cord (respectively, another quantum cord). 
Moreover, it follows that
$\nabla_{\phi^*\oA}(\phi^*\oB)=\phi^*(\nabla_\oA(\oB))$ 
and we thus obtain the natural isomorphisms 
\begin{align*}
&\phi_{\Algebroidg}^*:\HG^k(M,\oA)\ra \HG^k(M,{\phi^*\oA})\ \ \ \ \ 
\text{and}\ \ \ \ \ 
\phi_{\Algebroidq}^*:\HQ^k(M,\qA)\ra \HQ^k(M,{\phi^*\qA}).
\end{align*}  

 Let us fix a pair of germ cords $\oA$ and 
 $\oB$ in $\CM(M,\Algebroidg)$  which are gauge equivalent. 
 There is a gauge element
 $\oY\in\Omega^0(M,\Groupoidg)$ such that $\oY\star \oA=\oB$.
 We then define the homomorphism
 \begin{align*}
& \Phi_{\oA\ra \oB}:\Omega^*_{s_{\oA}}(M,\Algebroidg)\ra 
\Omega^*_{s_{\oB}}(M,\Algebroidg),
\ \ \ \ \ \ \Phi_{\oA\ra \oB}(\od):=\frac{\od\circ \ob}{\ob'}.
 \end{align*}
Note that $\Phi_{\oA\ra \oB}$ is an isomorphism.
\begin{prop}\label{prop:cohomology}
For every two gauge equivalent germ cords 
$\oA,\oB\in \CM(M,\Algebroidg)$
 the following diagram is commutative.
\begin{displaymath}
\begin{diagram}
\Omega^*_{s_\oA}(M,\Algebroidg)&\rTo{\ \ \ \nabla_{\oA}\ \ \ }&
\Omega^{*+1}_{s_\oB}(M,\Algebroidg)\\
\dTo{\Phi_{\oA\ra \oB}}&
&\dTo_{\Phi_{\oA\ra \oB}}\\  
\Omega^*_{s_\oA}(M,\Algebroidg)&\rTo{\ \ \ \nabla_{\oB}\ \ \ }&
\Omega^{*+1}_{s_\oB}(M,\Algebroidg)
\end{diagram}.
\end{displaymath}
In particular, $\Phi_{\oA\ra \oB}$ defines an  isomorphism
$\Phi_{\oA\ra \oB}:\HG^*(M,\oA)\ra 
\HG^*(M,\oB)$.
\end{prop}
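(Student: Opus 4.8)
The plan is to prove the single \emph{intertwining identity}
\[\nabla_\oB\circ\Phi_{\oA\ra\oB}=\Phi_{\oA\ra\oB}\circ\nabla_\oA\]
as maps $\Omega^*_{s_\oA}(M,\Algebroidg)\ra\Omega^{*+1}_{s_\oB}(M,\Algebroidg)$; this is exactly the commutativity of the square, and the rest is formal. Once the square commutes, $\Phi_{\oA\ra\oB}$ is a cochain map between the complexes $(\Omega^*_{s_\oA},\nabla_\oA)$ and $(\Omega^*_{s_\oB},\nabla_\oB)$, both of which have square-zero differential because $\oA$ and $\oB=\oY\star\oA$ are germ cords. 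It is invertible, with inverse $\Phi_{\oB\ra\oA}$ built from $\oY^{-1}$ (note $\oY^{-1}\star\oB=(\oY\circ\oY^{-1})\star\oA=\oA$ by the composition law of Proposition~\ref{prop:gauge-group}): using the chain rule $(\oY^{-1})'=1/(\oY'\circ\oY^{-1})$ one checks directly that $\Phi_{\oB\ra\oA}\circ\Phi_{\oA\ra\oB}=\mathrm{id}$. Hence $\Phi_{\oA\ra\oB}$ descends to an isomorphism $\Phi_{\oA\ra\oB}:\HG^*(M,\oA)\ra\HG^*(M,\oB)$.

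To verify the square I would expand both composites using only the two calculus identities recorded before Proposition~\ref{prop:gauge-group}, namely $(\oE\circ\oY)'=(\oE'\circ\oY)\,\oY'$ and $d(\oE\circ\oY)=(d\oE)\circ\oY+(d\oY)(\oE'\circ\oY)$, together with the fact that composition with $\oY$ is multiplicative. First I would compute $d\Phi_{\oA\ra\oB}(\od)=d\big((\od\circ\oY)/\oY'\big)$ by the graded Leibniz rule; the ``diagonal'' contribution $((d\od)\circ\oY)/\oY'$ is precisely $\Phi_{\oA\ra\oB}(d\od)$, so the difference $d\Phi_{\oA\ra\oB}(\od)-\Phi_{\oA\ra\oB}(d\od)$ is a sum of correction terms built from $d\oY$, $\oY'$ and $\oY''$. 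In parallel I would expand $[\oB,\Phi_{\oA\ra\oB}(\od)]=\oB\,\Phi_{\oA\ra\oB}(\od)'-\Phi_{\oA\ra\oB}(\od)\,\oB'$, using $(\od\circ\oY)'=(\od'\circ\oY)\oY'$ to simplify $\Phi_{\oA\ra\oB}(\od)'$, and I would expand $\Phi_{\oA\ra\oB}([\oA,\od])=\big((\oA\circ\oY)(\od'\circ\oY)-(\od\circ\oY)(\oA'\circ\oY)\big)/\oY'$.

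The heart of the matter is then to substitute the gauge relation $\oB\,\oY'=\oA\circ\oY-d\oY$, together with its $\h$-derivative (which expresses $\oB'$ in terms of $\oA'\circ\oY$, $d(\oY')$ and $\oY''$), at exactly the right moment. When this is done, the terms containing $\oA\circ\oY$ cancel between $\Phi_{\oA\ra\oB}([\oA,\od])$ and the $\oB\,(\od'\circ\oY)$ part of the bracket, the terms containing $d\oY$ cancel the correction terms coming from $d\Phi_{\oA\ra\oB}(\od)$, and the two $\oY''$-terms (one from $\Phi_{\oA\ra\oB}(\od)'$, one from $\oB'$) cancel against the $d(1/\oY')$ term. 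What remains on the two sides is the same expression, yielding $\nabla_\oB\Phi_{\oA\ra\oB}(\od)=\Phi_{\oA\ra\oB}(\nabla_\oA\od)$.

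The main obstacle I anticipate is bookkeeping rather than conceptual: keeping the graded signs straight in the products $\oA\od'$, $\oB\,\Phi_{\oA\ra\oB}(\od)'$ and $\Phi_{\oA\ra\oB}(\od)\,\oB'$ for a form $\od$ of arbitrary degree (the one-form factors $\oA$, $\oB$ and $d\oY$ anticommute past $\od\circ\oY$ with a sign $(-1)^{|\od|}$), and ensuring the cancellations above respect these signs. A secondary point to monitor throughout is that all source maps agree, $s_\od=s_\oA=t_\oY$ and $s_{\Phi_{\oA\ra\oB}(\od)}=s_\oY=s_\oB$, so that every composition with $\oY$ and every bracket is defined; this is precisely what places the image of $\Phi_{\oA\ra\oB}$ in the $s_\oB$-graded subspace demanded by the diagram. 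Flatness $\nabla_\oA^2=\nabla_\oB^2=0$ is needed only to have well-defined cohomology, not for the square itself. The verification for quantum cords is identical, working with formal power series in place of germs.
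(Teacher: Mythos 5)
Your proposal is correct and follows essentially the same route as the paper: the paper's proof is precisely the direct expansion of $\nabla_{\oB}(\Phi_{\oA\ra\oB}(\od))$ using $\oB=(\oA\circ\ob-d\ob)/\ob'$ and its $\h$-derivative, after which the $d\ob$-, $\ob''$- and $d\ob'$-terms cancel to leave $((d\od+\oA\od'-\oA'\od)\circ\ob)/\ob'=\Phi_{\oA\ra\oB}(\nabla_\oA\od)$. Your additional observation that $\Phi_{\oB\ra\oA}$ furnishes the inverse cochain map is a harmless (and correct) supplement to the paper's implicit use of invertibility.
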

 \begin{proof}
This is a straight forward computation:
\begin{align*}
(\nabla_{\oB}\circ\Phi_{\oA\ra \oB})(\od)&=
d\Big(\frac{\od\circ\ob}{\ob'}\Big)
+\frac{\oA\circ\ob-d\ob}{\ob'}\Big(\od'\circ \ob
-\frac{(\od\circ\ob)\ob''}{(\ob')^2}\Big)\\
&\ \ \ \ -\Big(\oA'\circ\ob-\frac{d\ob'}{\ob'}
-\frac{(\oA\circ\ob-d\ob)\ob''}{(\ob')^2}\Big)
\frac{\od\circ\ob}{\ob'}\\
&=\Big(\frac{(d\od)\circ\ob+d\ob(\od'\circ\ob)}{\ob'}
-\frac{d\ob'(\od\circ\ob)}{(\ob')^2}\Big)\\
&\ \ \ \ +\frac{(\oA\circ\ob-d\ob)(\od'\circ\ob)
-(\oA'\circ\ob)(\od\circ\ob)}{\ob'}
+\frac{(d\ob')(\od\circ\ob)}{(\ob')^2}\\
&=\frac{(d\od+\oA\od'-\oA'\od)\circ\ob}{\ob'}\\
&=(\Phi_{\oA\ra\oB}\circ\nabla_\oA)(\od)
\end{align*}
 \end{proof}
 Proposition~\ref{prop:cohomology} implies that $\HG^*(M,\oA)$,
 for $\oA\in\CM(M,\Algebroidg)$ corresponding to a fixed foliation 
 $\Fol$, form a system of cohomology groups together with the 
 isomorphisms  $\Phi_{\oA\ra \oB}$, and it
 makes sense to talk about the natural cohomology group
\begin{align*}
\HG^*(M,\Fol)=\frac{\coprod_\oA \Hall^*(M,\oA)}{\sim},
\end{align*}
where the union is over all germ cords $\oA$ corresponding 
to the foliation $\Fol$ and 
 $x\sim y$ if $x\in \HG^*(M,\oA)$ for some $\oA$ and 
$y=\Phi_{\oA\ra \oB}(x)\in \HG^*(M,\oB)$
for some other germ cord $\oB$ corresponding 
to $\Fol$.\\

The cohomology groups are closed under Lie bracket. In fact, for given 
$\oc\in\Omega^k(M,\Algebroidg)$ and $\od\in\Omega^l(M,\Algebroidg)$ 
we have
\begin{align*}
\nabla_\oA[\oc,\od]
&=[d\oc,\od]+(-1)^k[\oc,d\od]+\oA\oc\od''-\oA\oc''\od
-\oA'\oc\od'+\oA'\oc'\od\\
&=[d\oc,\od]+(\oA\oc'\od'-\oA'\oc\od'-\oA\oc''\od+\oA''\oc\od)\\
&\ \ \ +(-1)^k[\oc,d\od]+(\oA\oc\od''-\oA''\oc\od
-\oA\oc'\od'+\oA'\oc'\od)\\
&=[\nabla_\oA(\oc),\od]+(-1)^k[\oc,\nabla_\oA(\od)].
\end{align*}
We thus obtain a well-defined bracket 
\begin{align*}
[\cdot,\cdot]:\HG^k(M,\oA)\otimes \HG^l(M,\oA)\ra 
\HG^{k+l}(M,\oA).
\end{align*}
Given a gauge function $\ob\in\Omega^0(M,\Groupoidg)$, let 
$\oB=\ob\star\oA$,  we thus have 
\begin{align*}
\Phi_{\oA\ra \oB}[\oc,\od]&=
[\Phi_{\oA\ra \oB}(\oC),\Phi_{\oA\ra \oB}(\oD)].
\end{align*}
In particular, we obtain well-defined graded Lie bracket maps
\begin{align*}
[\cdot,\cdot]:\HG^k(M,\Fol)\otimes \HG^l(M,\Fol)
\ra \HG^{k+l}(M,\Fol).
\end{align*}
The whole discussion may be repeated when we consider differential
forms with values in $\Algebroidq$ and the corresponding cohomology 
groups $\HQ^*(M,\qA)$. The outcome  is the differential graded Lie algebra
$\HQ^*(M,\Fol)=\oplus_k\HQ^k(M,\Fol)$.

\subsection{Cohomology groups of non-singular foliations}
Let us assume that $\oA=\oA_{a,V}\in\Omega^1(M,\gfrak)$
denote the germ cord with $s_\oA=0$ 
associated with a foliation $\Fol$ which is
constructed from a $1$-form $a$ and a transverse vector field 
$V$ so that $a(V)=-1$. Furthermore, let 
\[A=\quot(\oA)=e^{\h L}a\in\Omega^1(M,\algebroidq)\] 
denote the corresponding quantum cord, where
 $L=L_V$ denotes differentiation in the direction of $V$.
Let us denote the subset of $\Omega^*(M,\R)$ consisting of the forms 
$w$ with $\imath_V(w)=0$ by $\Omega^*_{a,V}(M,\R)$. It is then 
clear that $d_{a,V}$ restricts to a {\emph{Bott differential}}
\begin{align*}
d_{a,V}:\Omega^*_{a,V}(M,\R)\ra \Omega^{*+1}_{a,V}(M,\R).
\end{align*}
To see this, it is enough to note that if $\imath_V(w)=0$ then
\begin{align*}
\imath_V(d_{a,V}(w))&=\imath_V\left(dw+aL(w)-bw\right)
=L(w)+\imath_V(a)L(w)-a\imath_V(L(w))-\imath_V(b)w
=0.
\end{align*}
Here $b=L(a)$ is the derivative of $a$ in the direction of $V$.
We can then define $H^*_{a,V}(M,\R)$ as the cohomology of the 
chain complex $(\Omega^*_{a,V}(M,\R),d_{a,V})$:
\begin{equation}\label{eq:k-cohomology}
H^*_{a,V}(M):=
\frac{\left\{w\in\Omega^*(M,\R)\ \big|\ \imath_V(w)=0\ \ 
\text{and}\ \  d_{a,V}(w)=0\right\}}
{\left\{d_{a,V}(z)\ \big|\ z\in\Omega^{*-1}(M,\R)\ \ 
\text{and}\ \ \imath_V(z)=0\right\}}.
\end{equation}

\begin{thm}\label{thm:k-cohomology}
With the above notation fixed, we have
\begin{align*}
\HG^*(M,\Fol)\simeq\HQ^*(M,\Fol)\simeq H^*_{a,V}(M).
\end{align*}
\end{thm}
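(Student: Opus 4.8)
The plan is to realize all three cohomologies through a single explicit chain map and to reduce the statement to a Poincaré-type lemma. The cornerstone is the map
\[
\Psi:\bigl(\Omega^*_{a,V}(M),d_{a,V}\bigr)\longrightarrow\bigl(\Omega^*_{s_\oA}(M,\Algebroidg),\nabla_\oA\bigr),
\qquad \Psi(w):=\Phi_\h^*(w),
\]
sending a form $w$ with $\imath_V(w)=0$ to the germ–cord–valued form $\Phi_\h^*w$ (recall $\oA=\oA_{a,V}=\Phi_\h^*a$ and $s_\oA=0$). First I would record $\oA'=\Phi_\h^*b$ and the identities $d\circ\Phi_\h^*=\Phi_\h^*\circ d$ and $\partial_\h\circ\Phi_\h^*=\Phi_\h^*\circ L_V$, and then compute directly that $\nabla_\oA(\Phi_\h^*w)=\Phi_\h^*\bigl(dw+aL(w)-bw\bigr)=\Phi_\h^*(d_{a,V}w)$, with the graded signs of the bracket matching the sign of $bw$ versus $wb$. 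Thus $\Psi$ is a chain map, and the germ statement $\HG^*(M,\Fol)\simeq H^*_{a,V}(M)$ reduces to proving that $\Psi$ is a quasi-isomorphism.

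To prove that, I would pass to the $(\h,x)$–picture on a germ $N$ of $\R\times M$ along $\{0\}\times M$, as in the proof of Proposition~\ref{prop:holonomy-of-leaves}. A germ form $\oE\in\Omega^*_{s_\oA}(M,\Algebroidg)$ is exactly a germ of an ordinary form on $N$ with $\imath_{\partial_\h}\oE=0$, and a short computation shows that under this identification $\nabla_\oA$ coincides with the Bott differential $d_{\oB,\partial_\h}$ of the foliation $\Fol_\oA$ defined by $\oB=\oA-d\h$ with transverse field $\partial_\h$ (the two $d\h\wedge\oE'$ contributions cancel). Hence $(\Omega^*_{s_\oA}(M,\Algebroidg),\nabla_\oA)$ is literally the Bott complex of $(\Fol_\oA,\partial_\h)$. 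Since $F(\h,x)=(\h,\Phi_\h x)$ is a foliated diffeomorphism carrying the product foliation $\Pcal=\R\times\Fol$ to $\Fol_\oA$, the pullback $F^*$ gives a chain isomorphism
\[
\bigl(\Omega^*_{s_\oA}(M,\Algebroidg),\nabla_\oA\bigr)=\mathrm{Bott}(\Fol_\oA,\partial_\h)\ \xrightarrow{\ \cong\ }\ \mathrm{Bott}\bigl(\Pcal,(F^{-1})_*\partial_\h\bigr)=\mathrm{Bott}(\Pcal,\partial_\h-V).
\]

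It then remains to identify the product–foliation Bott cohomology with $H^*_{a,V}(M)$. Here the $\h$–direction is transverse–trivial, so for the horizontal lift $V$ one has a clean transverse Poincaré lemma: $\pi^*:\Omega^*_{a,V}(M)\to\mathrm{Bott}(\Pcal,V)$ is a chain map with $j^*\pi^*=\mathrm{id}$ (here $j:\{0\}\times M\hookrightarrow N$), and the de Rham homotopy $K\eta=\int_0^\h\imath_{\partial_\h}\eta$ gives $\nabla K+K\nabla=\mathrm{id}-\pi^* j^*$ while preserving $\{\imath_V=0\}$. The step I expect to be the real obstacle is reconciling the transverse field $\partial_\h-V$ produced by $F$ with the horizontal lift $V$: I would establish that Bott cohomology is independent of the choice of transverse vector field, via the explicit chain isomorphism $w\mapsto w+a\wedge\imath_{V'}w$ between $\{\imath_V=0\}$ and $\{\imath_{V'}=0\}$, and verify that it intertwines the two Bott differentials. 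Getting the Lie–derivative and wedge signs in $d_{a,V}$ to cooperate both in this isomorphism and in the homotopy identity is where the genuine care lies; the rest is bookkeeping along the chain of isomorphisms $\HG^*(M,\oA)\cong\mathrm{Bott}(\Pcal,\partial_\h-V)\cong\mathrm{Bott}(\Pcal,V)\cong H^*_{a,V}(M)$.

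Finally, the quantum statement and the comparison $\HG\simeq\HQ$ follow in parallel. The analogous chain map $w\mapsto\quot(\Phi_\h^*w)=e^{\h L}w$ into $(\Omega^*_{s_\qA}(M,\Algebroidq),\nabla_{\qA})$, with $\qA=\quot(\oA)=e^{\h L}a$, is handled by the identical argument carried out on Taylor coefficients, where the $\h$–integration in the homotopy is performed formally term by term; this yields $\HQ^*(M,\Fol)\simeq H^*_{a,V}(M)$. Since $\quot$ is a chain map compatible with each step of the construction ($F^*$, the transverse–field isomorphism, and the $\h$–homotopy all commute with Taylor expansion), the induced map $\quot:\HG^*(M,\Fol)\to\HQ^*(M,\Fol)$ corresponds to the identity on $H^*_{a,V}(M)$ and is therefore an isomorphism. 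Throughout, Proposition~\ref{prop:cohomology} guarantees independence of the representative cord within its gauge class, so all statements descend to $\Fol$ and define $\HG^*(M,\Fol)$ and $\HQ^*(M,\Fol)$ unambiguously.
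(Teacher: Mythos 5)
Your argument is correct in substance, but it takes a genuinely different route from the paper's. The paper proves the theorem by a direct normal--form computation: given a cocycle $X=\sum_n x_n\h^n$ (resp.\ a germ cocycle $\oa$), it constructs a correction $Y$ by the explicit recursion $y_n=\tfrac1n\imath_V(x_{n-1}+dy_{n-1})$ (resp.\ by solving the ODE $\ob'=\imath_V(\oa+d\ob)$) so as to arrange $\imath_V X=0$; applying $\imath_V$ to $\nabla_A X=0$ then forces $X'=L(X)$, hence $X=e^{\h L}x_0$ is determined by its initial term, and the cocycle and coboundary conditions on $X$ translate verbatim into the conditions defining $H^*_{a,V}(M)$. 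Your proof replaces this fibrewise elimination by a global chain--level argument: you identify $(\Omega^*_{s_\oA}(M,\Algebroidg),\nabla_\oA)$ with the Bott complex of the suspension foliation $\Fol_\oA$ on a germ of $\R\times M$ with transverse field $\partial_\h$, conjugate by $F$ to the product foliation $\Pcal$, and then contract the $\h$--direction by the fibrewise de Rham homotopy after normalizing the transverse field. What the paper's route buys is self-containedness (no auxiliary lemma on transverse--field independence, no twisted homotopy identity) and the explicit normal form $e^{\h L}x_0$, which it reuses elsewhere; what yours buys is conceptual transparency -- it exhibits the cord complex as a thickening of the Bott complex that deformation-retracts onto the zero slice, and it makes the role of Proposition~\ref{prop:holonomy-of-leaves}'s picture visible. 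Two small repairs are needed: the transverse field produced on the product side is $F_*\partial_\h=\partial_\h+V$, which satisfies $\pi^*a(\partial_\h+V)=-1$, rather than $\partial_\h-V$, which has the wrong normalization and so does not define a Bott complex in the paper's convention; and the fibrewise integral $\int_0^\h\imath_{\partial_\h}$ requires shrinking the germ neighbourhood of $\{0\}\times M$ to be fibrewise star--shaped, which is harmless for germs but should be said. The unproved steps you flag (the chain--map property of $w\mapsto w+a\wedge\imath_{V'}w$ and the twisted homotopy identity) do go through: both complexes $\Omega^*_{a,V}$ and $\Omega^*_{a,V'}$ are splittings of the leafwise complex with coefficients in the conormal line, and the twist terms $a\wedge L_V(\cdot)$ and $b\wedge(\cdot)$ anticommute with $\int_0^\h\imath_{\partial_\h}$ because $\pi^*a$ and $\pi^*b$ are $\h$--independent and free of $d\h$.
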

\begin{proof}
We first prove the isomorphism  $\HQ^*(M,\Fol)\simeq H^*_{a,V}(M)$.
Given $X=\sum_nx_n\h^n$ in $\Omega^*(M,\algebroidq)$  
we can inductively define 
\begin{align*}
&Y=\sum_ny_n\h^n\in\Omega^{*-1}(M,\algebroidq),
\ \ \ \ \ \ y_{n}=\begin{cases}
0 &\text{if}\ n=0,\\
\frac{1}{n}\imath_V(x_{n-1}+dy_{n-1})\ \ \ \ &\text{if}\ n\geq 1. 
\end{cases}
\end{align*}
From this definition, it follows that $\imath_V(Y)=\imath_V(Y')=0$ 
and $Y'=\imath_V(X+dY)$. This implies
\begin{align*}
\imath_V(X+\nabla_A(Y))=\imath_V\left(X+dY+AY'-A'Y\right)
=Y'+\imath_V(A)Y'-\imath_V(A')Y=0.
\end{align*}
In particular, if $\nabla_A(X)=0$ and  $X$ represents
a cohomology class in  $\HQ^*(M,\qA)$, after replacing $X$ with 
$X+\nabla_A(Y)$ which represents the same cohomology 
class as $X$, we can assume $\imath_V(X)=0$ 
(and thus $\imath_V(X')=0$). In particular,
$\imath_V(d(X))=L(X)$. Applying $\imath_V$ to the two sides of  
$\nabla_A(X)=0$ we find $L(X)-X'=0$, which is equivalent to 
$X=e^{\h L}x_0$. From $\nabla_A(X)=0$ we obtain
\begin{align*}
0&
=d\left(e^{\h L}x_0\right)+\left(e^{\h L}a\right)
\left(e^{\h L}L(x_0)\right)-\left(e^{\h L}b\right)
\left(e^{\h L}x_0\right)
=e^{\h L}\left(d(x_0)+aL(x_0)-bx_0\right)\\
\iff\ \ \ 0&=d(x_0)+aL(x_0)-bx_0.
\end{align*}
In particular, $x_0\in\Omega^{*}_{a,V}(M,\R)$ is in the kernel of 
$d_{a,V}$ and uniquely determines $X$, such that $\nabla_A(X)=0$ 
and $\imath_V(X)=0$.\\

Let us now assume that $X$ is of the form $\nabla_A(Y)$ and 
satisfies $\imath_V(X)=0$. We can then assume that $\imath_V(Y)=0$ 
as well, possibly after replacing $Y$ by some $Y+\nabla_A(Z)$. 
The above considerations imply that $X=e^{\h L}x_0$. If we look 
at the initial terms in the equation $X=\nabla_A(Y)$ we conclude
\begin{align*}
&x_0=d(y_0)+ay_1-by_0\ \ \  
\Rightarrow\ \ \ 0=\imath_V(x_0)
=L(y_0)+\imath_V(a)y_1-\imath_V(b)y_0
\ \ \ \Rightarrow\ \ \ y_1=L(y_0).
\end{align*}
Thus, $x_0=d_{a,V}(y_0)$, which completes the 
proof of the isomorphism $\HQ^*(M,\Fol)\simeq H^*_{a,V}(M)$.\\

The isomorphism $\HG^*(M,\Fol)\simeq H^*_{a,V}(M)$ is proved
in a completely similar manner, as sketched below.
Suppose that $\qA=\qA_{a,V}\in\CM(M,\algebroidq)$ correspond 
to the foliation $\Fol$.
If $\oa\in\Omega^*(M,\algebroidg)$ represents a cohomology class
in $\HG^*(M,\oA)$, we can construct $\ob$ so that
$\ob(0)=0$ and $\ob'=\imath_V(\oa+d\ob)$ is satisfied.
After replacing $\oa$ with $\oa+\nabla_\oA(\ob)$ we can assume 
that $\imath_V(\oa)=\imath_V(\oa')=0$. Let $x_0$ denote 
the initial term of $\oX$. We then have 
$d(x_0)+aL(x_0)-L(a)x_0=0$. In particular, $x_0\in\Ker(d_{a,V})$
and it uniquely determines $\oX$ in a neighborhood
of $0\in\R$ through the differential equation $\oX'=L(\oX)$, 
with the initial condition $\oX(0)=x_0$. Furthermore, for 
this solution we find 
that $\oE=\nabla_\oA(\oX)$ satisfies $\oE(0)=0$ and $\oE'=L(\oE)$.
This differential equation implies that $\oE$ vanishes in a 
neighborhood of $0\in\R$. Finally, if $\oa=\nabla_\oA(\ob)$,
we can assume that $\imath_V(\oX)=0$. After replacing
$\ob$ with $\ob+\nabla_\oA(\oc)$ if necessary, we can further 
assume that $\imath_V(\ob)=0$. Looking at the initial 
terms on the two sides of 
$\oa=\nabla_\oA(\ob)$ we find $y_1=L(y_0)$ and 
$x_0=d(y_0)+ay_1-by_0$, which means that $x_0=d_{a,V}(y_0)$. 
This observation completes the proof of the theorem.
\end{proof}

In the remainder of this subsection, we will focus on the 
computation of $\HQ^*(M,\Fol)$ in a number of special cases. 
The above theorem implies that the corresponding results remain 
valid for the groups $\HG^*(M,\Fol)$.

\begin{cor}\label{cor:top-is-zero}
For every transversely oriented codimension-one foliation $\Fol$ on 
the $n$-dimensional manifold $M$, $\HQ^n(M,\Fol)=0$.
\end{cor}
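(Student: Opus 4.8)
The plan is to reduce the statement to a computation of the Bott cohomology $H^*_{a,V}(M)$ via Theorem~\ref{thm:k-cohomology}, and then to observe that in top degree the defining cochain complex is already trivial for elementary reasons. First I would fix a $1$-form $a$ and a transverse vector field $V$ with $a(V)=-1$ realizing $\Fol$, so that the germ cord $\oA=\oA_{a,V}$ (and its quantization) represents $\Fol$. By Theorem~\ref{thm:k-cohomology} we then have an isomorphism $\HQ^n(M,\Fol)\simeq H^n_{a,V}(M)$, where by the description in~\eqref{eq:k-cohomology} the group $H^n_{a,V}(M)$ is the $n$-th cohomology of the complex $(\Omega^*_{a,V}(M,\R),d_{a,V})$ consisting of the forms annihilated by $\imath_V$.

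Next I would examine the space $\Omega^n_{a,V}(M,\R)$ of $n$-forms $w$ on $M$ with $\imath_V(w)=0$. The key point is that $V$ is nowhere vanishing: transversality to a codimension-one foliation forces $V_p\notin T_p\Fol$, and in particular $V_p\neq 0$ at every $p\in M$. Since $M$ has dimension $n$, any $w\in\Omega^n(M,\R)$ is a top-degree form, and a pointwise linear-algebra argument shows that $\imath_V(w)=0$ forces $w=0$. Indeed, at each $p$ one may extend $V_p$ to a basis $V_p,e_2,\dots,e_n$ of $T_pM$, so that $w_p(V_p,e_2,\dots,e_n)=(\imath_V w)_p(e_2,\dots,e_n)=0$; since an alternating $n$-form on an $n$-dimensional space is determined by its value on a single basis, this gives $w_p=0$, hence $w=0$.

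Consequently $\Omega^n_{a,V}(M,\R)=0$, so in particular the space of $d_{a,V}$-cocycles in degree $n$ is zero and $H^n_{a,V}(M)=0$. Combined with the isomorphism above this yields $\HQ^n(M,\Fol)=0$, and the corresponding vanishing of $\HG^n(M,\Fol)$ follows from the isomorphism $\HG^*(M,\Fol)\simeq\HQ^*(M,\Fol)$ already recorded in Theorem~\ref{thm:k-cohomology}. I expect no genuine obstacle here: the whole argument rests on the nowhere-vanishing of $V$ together with the elementary fact that contraction with a nonzero vector annihilates only the zero top-form, which makes this the most immediate of the cohomology vanishing computations in this section.
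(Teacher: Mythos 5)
Your proof is correct and follows exactly the route the paper intends: the paper simply declares the corollary an ``immediate'' consequence of Theorem~\ref{thm:k-cohomology}, and your observation that $\Omega^n_{a,V}(M,\R)=0$ because a top-degree form annihilated by contraction with the nowhere-vanishing transverse field $V$ must vanish pointwise is precisely the content of that immediacy. Nothing further is needed.
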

\begin{proof}
This is an immediate corollary of Theorem~\ref{thm:k-cohomology}.
\end{proof}

Let us now assume that $a\in\Omega^1(M,\R)$ is a closed nowhere 
zero one-from. Then $a$ defines a foliation $\Fol=\Fol^a$ on $M$ 
and the corresponding quantum cord is $A=a$. The foliation $\Fol$ 
may be lifted to the universal cover $\wt{M}$ of $M$ using the 
covering map $\pi:\wt{M}\ra M$ to give the foliation $\wt{\Fol}$. 
This foliation corresponds to the quantum cord $\wt{A}=\pi^* A$.
Let us denoted the leaf space of the foliation $\wt{\Fol}$ 
by $\wt{\Lcal}=\Lcal_{\wt{\Fol}}$. We can also
define $\Lcal=\Lcal_{\Fol}$ to be the quotient of $\wt\Lcal$ under
the covering map $\pi$. We call a function on $\Lcal$ 
{\emph{smooth}} if it lifts to a smooth function on $\wt\Lcal$. 
 In particular, the restriction of any such function to the 
 closure of any leaf $\ell$ of $\Fol$ is constant.
With the above notation fixed,
the group $\HQ^0(M,\Fol)$ may then be computed
in a relatively easy way, using Theorem~\ref{thm:k-cohomology}.\\
 
\begin{cor}\label{cor:zero-cohomology}
If $a\in\Omega^1(M,\R)$ is a closed nowhere zero one-from which 
gives the foliation $\Fol$, 
\[\HQ^0(M,\Fol)\simeq C^\infty(\Lcal_\Fol,\R).\]
\end{cor}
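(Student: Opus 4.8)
The plan is to reduce the computation to the Bott complex via Theorem~\ref{thm:k-cohomology}, which gives $\HQ^0(M,\Fol)\simeq H^0_{a,V}(M)$ once we fix a transverse vector field $V$ with $a(V)=-1$, and then to identify $H^0_{a,V}(M)$ with $C^\infty(\Lcal_\Fol,\R)$ by hand. First I would record the simplifications forced by the hypothesis that $a$ is closed. Since $\imath_V(a)=a(V)=-1$ is constant and $da=0$, Cartan's formula gives $b=L(a)=d\,\imath_V(a)+\imath_V(da)=0$; consequently the associated quantum cord degenerates to $A=e^{\h L}a=a$, and in degree zero the twisted differential reduces to $d_{a,V}(f)=df+a\,L(f)-bf=df+L(f)\,a$ for $f\in\Omega^0_{a,V}(M,\R)$. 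Note that $\imath_V$ annihilates functions, so the degree-zero piece of the complex is all of $C^\infty(M,\R)$.

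Next I would compute the kernel. Because the complex vanishes in negative degree, $H^0_{a,V}(M)=\ker\bigl(d_{a,V}\colon\Omega^0\to\Omega^1\bigr)=\{f\in C^\infty(M,\R)\ :\ df=-L(f)\,a\}$, and I would show this is exactly the space of smooth functions constant along the leaves of $\Fol$. Indeed, the equation $df=-L(f)\,a$ forces $df$ to be a pointwise multiple of $a$, so $df$ annihilates $\ker a=T\Fol$ and $f$ is locally, hence (the leaves being connected) globally, constant on each leaf; conversely, if $df$ vanishes on $T\Fol$ then $df_p=\lambda_p a_p$ for a scalar $\lambda_p$, and contracting with $V$ yields $\lambda_p=-L(f)_p$, recovering $df=-L(f)\,a$. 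Thus $H^0_{a,V}(M)=\{f\in C^\infty(M,\R):f\ \text{is constant on each leaf of}\ \Fol\}$ as $\R$-vector spaces.

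Finally I would match this space with $C^\infty(\Lcal_\Fol,\R)$ through the universal-cover description set up just before the corollary. Passing to $\wt M$, the closed nowhere-zero form pulls back to an exact nowhere-zero form $\pi^*a=dh$, so the leaves of $\wt\Fol$ are the connected level sets of the submersion $h$, and a function on $\wt\Lcal$ is smooth precisely when its pullback to $\wt M$ is a smooth function constant along $\wt\Fol$. A smooth function on $\Lcal_\Fol=\wt\Lcal/\pi$ is then the same datum as a $\pi_1(M)$-invariant smooth function on $\wt M$ constant along $\wt\Fol$, which is exactly the pullback $\pi^*f$ of a smooth function $f$ on $M$ constant along $\Fol$; conversely any such $f$ descends to a smooth function on $\Lcal_\Fol$, and the two assignments are mutually inverse and $\R$-linear. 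Composing this bijection $H^0_{a,V}(M)\xrightarrow{\ \sim\ }C^\infty(\Lcal_\Fol,\R)$ with the isomorphism of Theorem~\ref{thm:k-cohomology} proves the corollary.

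The step I expect to be the main obstacle is this last identification: the quotient $\Lcal_\Fol$ can be badly non-Hausdorff (for instance when $\Fol$ has dense leaves), so "smooth function constant on leaves" must be interpreted through the universal-cover definition rather than naively on the set-theoretic quotient. The care needed is to verify that continuity already forces a constant-on-leaves function to be constant on leaf closures (so that the paper's parenthetical remark is automatic), and that descent along $\pi$ respects the $\pi_1(M)$-action and path-lifting of leaf paths, so that the correspondence is a genuine bijection onto $C^\infty(\Lcal_\Fol,\R)$ and not merely an injection.
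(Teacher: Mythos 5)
Your proposal is correct and follows the same route as the paper's proof: apply Theorem~\ref{thm:k-cohomology} to reduce to the kernel of $d_{a,V}$ in degree zero, identify that kernel with smooth functions constant on leaves, and match these with $C^\infty(\Lcal_\Fol,\R)$ via the universal-cover definition of smoothness on the leaf space. You simply supply details the paper leaves implicit (the computation $b=L(a)=0$, the equivalence $df=-L(f)\,a\iff df|_{T\Fol}=0$, and the descent argument through $\wt\Lcal$), so no further changes are needed.
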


\begin{proof}
Theorem~\ref{thm:k-cohomology} identifies $\HQ^0(M,\Fol)$
with the kernel of $d_{a,V}$ (for a corresponding transverse vector 
field $V$). A function $f_0\in C^\infty(M,\R)$ is in the kernel of 
$d_{a,V}$ if and only if its restriction to every leaf of 
$\Fol$ is constant.
Such a function gives a section in $C^\infty(\Lcal_\Fol,\R)$.
Conversely, any function in $C^\infty(\Lcal_\Fol,\R)$ gives a smooth 
function from $M$ to $\R$ which remains constant on the leaves of 
$\Fol$, which is in the kernel of $d_{a,V}$.
\end{proof}

In fact, most of the above argument may be repeated for arbitrary 
foliations to compute their zero cohomology group. 
The equation $df_0+aL(f_0)-bf_0=0$ is satisfied in the 
transverse direction, i.e. the image of the left-hand-side under 
$\imath_V$ is automatically zero. The equation is thus equivalent 
to the equalities $df_0-f_0b=0$ on all leaves of $\Fol$. 
Note that the restriction of $b=L(a)$ to the leaves of 
$\Fol$ is closed, since $dL(a)=L(da)=L^2(a)a$. 
The $1$-form $b$ would then define the cohomology groups 
$H_{b}^i(\ell,\R)$ for every leaf $\ell$ of $\Fol$. 
For this purpose, we use the twisted differential 
\[d_{b}:\Omega^*(\ell,\R)\ra \Omega^{*+1}(\ell,\R),\ \ \ \ \  
d_{b}(X):=d(X)-L(b)X.\] 
The above argument shows that for every leaf $\ell$ of $\Fol$, 
$f_0$ is a section of $H_{b}^0(\ell,\R)$, which is zero unless 
$b$ is exact on $\ell$. If $b=dg_\ell$ on $\ell$, it follows that 
$f_0=c_\ell e^{g_\ell}$ for some constant $c_\ell\in\R$. 
In particular, if $g_0$ is not bounded above, the bounded function 
$f_0$ is forced to be zero.\\

The $1$-form $b=L(a)$ satisfies $da=ba$, and changing the vector 
field $V$ would correspond to choosing other $b$ with this property. 
If $db'=b'a$ for another $1$-form $b'$, it follows that $b'=b+ha$ 
for some function $h$. In particular, the restriction of $b$ to the 
leaves of $\Fol$ only depends on $a$. 
If $a$ is changed to $e^{h}a$, where $h$ is forced to be bounded 
above, the restriction of $b$ to the leaves of $\Fol$ is changed to 
$b+dh$. The set of points  $\Dcal=\Dcal_\Fol\subset \Lcal_\Fol$
 where the restriction of $b$ is  not of the form
$dg$ for some real valued function which is bounded above, is 
thus independent of the choice of $a$ and $b$, and only depends on 
the foliation $\Fol$ and functions in $H^0_{a,V}(M)$ vanish on 
$\Dcal$. Following this approach, every cohomology class 
$\qa\in \HQ^0(M,\Fol)$ may be studied using its restrictions to 
the leaves.

\section{Foliations of higher codimension}
\label{sec:general-case}
\subsection{The groupoids and the corresponding algebras}
\label{subsec:gauge-group-general}
In this section, we generalize our constructions in the previous 
sections to the case of foliations of higher codimension. The 
first step would be generalizing the Lie groupoids $\Groupoidg$ and 
$\Groupoidq$ and the corresponding algebroids
$\Algebroidg$ and $\Algebroidq$. Most computations remain completely
similar to the case of codimension one foliations.\\

Let us denote the groupoid of germs of local diffeomorphisms of 
$\R^k$ by $\Groupoidg_k$. The objects of $\Groupoidg_k$ are the 
points in $\R^k$ and the arrows from $x\in\R^k$ to $y\in\R^k$ are 
the germs of orientation preserving diffeomorphisms 
$f:\R^k\ra \R^k$ which send $x$ to $y$. The equivalence class of 
$f$ is denoted by $[f,x,y]_\sg$ or $[f,x]_\sg$. Note that 
$[f,x]_\sg=[g,x]_\sg$ if $f=g$ in an open 
neighborhood of  $x\in\R^k$. Similarly, we can define $\Groupoidq_k$
by requiring that $[f,x]_\sq=[g,x]_\sq$ if the Taylor expansions of 
$f$ and $g$ match at $x$. The equivalence class $[f,x]_\sq$ may then 
be represented by a formal power series
\[
[f,x]_\sq=\qY=\sum_{i=1}^k\sum_{\substack{I=(i_1,\ldots,i_k)
\in\Z^k\\ i_1,\ldots,i_k\geq 0}}
y_{i,I}(\h_1-x_1)^{i_1}\cdots (\h_k-x_k)^{i_k}\partial_i
=\sum_{i,I}y_{i,I}(\h-x)^I\partial_i.
\]
Here, $\partial_1,\ldots,\partial_k$ denote the unit vector of 
$\R^k$ and have a formal nature in the above expression. 
Moreover, we have 
\[y_{i,I}=\frac{\partial^{|I|}f_i}{(\partial \h_1)^{i_1}\cdots 
(\partial \h_k)^{i_k}}(x_1,\hdots,x_k),\ \ \ \ 
\text{where\ }f=(f_1,\ldots,f_k)\ \text{and}\ I=(i_1,\ldots,i_k).\]
Being a local diffeomorphism means
that the determinant of the matrix 
$\det(\qY)=\det(y_{i,j})_{i,j=1}^k$ is positive.
Correspondingly, we can define the algebroids $\Algebroidg_k$ 
and $\Algebroidq_k$ which are fibered over $\R^k$. The fiber of 
$\Algebroidq_k$ over $x\in\R^k$ consists of the formal power 
series $\qA=\sum_{i,I}a_{i,I}(\h-x)^I\partial_i$.
One can choose the discrete topology on the space of arrows from 
$x$ to $y$ in $\Groupoidg_k$ and $\Groupoidq_k$ to arrive at the 
groupoids $\Gamma^\sg_k$ and $\Gamma^\sq_k$. These two groupoids 
are the same as $\Groupoidg_k$ and $\Groupoidq_k$ (respectively) 
except that their topologies are different.
There are source maps and target maps
\begin{align*}
&s,t:\Groupoidg_k\ra \R^k, \ \ \ \  s,t:\Groupoidq_k\ra \R^n,\ \ \ \ 
s,t:\Algebroidg_k\ra \R^k \ \ \ \ \text{and}\ \ \ \ 
s,t:\Algebroidq_k\ra \R^n.
\end{align*}

The groupoids $\Groupoidg_k$ and $\Groupoidq_k$ act on 
$\Algebroidg_k$ and $\Algebroidq_k$, respectively, and the 
action is given  by 
\begin{align*}
\oY\star \oA:=(\oA\circ \oY)(\oY')^{-1}\ \ \ \ \ 
\forall\ \oY\in\Groupoidg_k,\ \oA\in\Algebroidg_k\ \ \ 
\text{such that}\ t_{\oY}=s_{\oA}.
\end{align*}
Here, $\oY'$ is the $k\times k$ matrix whose entries consist of 
different first order derivatives of $\oY=(\oY_1,\ldots,\oY_k)$ 
with respect to the variables $\h_1,\ldots,\h_k$. Since 
$\det(\oY)$ is positive, it follows that $\oY'$ is invertible
(both in the germ case and the quantum case).\\

\subsection{Germ cords and quantum cords}
There is a Lie bracket on $\Omega^*(M,\Algebroidg_k)$ 
(and an induced Lie bracket on $\Omega^*(M,\Algebroidq_k)$. 
For this purpose, we define 
\begin{align*}
\Big[\sum_{i=1}^k\oA_i\partial_i,\sum_{j=1}^k\oB_j\partial_j\Big]:=
\sum_{i,j}(\oA_i(\partial_i \oB_j)-\oB_i(\partial_i\oA_j))
\partial_j.
\end{align*}
The germ cords and quantum cords may then be defined as before. 
A germ cord is a smooth section $\oA\in\Omega^1(M,\Algebroidg_k)$ 
which satisfied $d\oA+\frac{1}{2}[\oA,\oA]=0$. This means that 
$\oA=(\oA_1,\ldots,\oA_k)$ and that 
$d\oA_i+\sum_j\oA_j(\partial_j\oA_i)=0$ for $i=1,\ldots,k$. The 
space of germ cords and quantum cords are denoted by 
$\CM(M,\Algebroidg_k)$ and $\CM(M,\Algebroidq_k)$, respectively. 
As before, $\Omega^0(M,\Groupoidg_k)$
and $\Omega^0(M,\Groupoidq_k)$ act on $\CM(M,\Algebroidg_k)$
and $\CM(M,\Algebroidq_k)$, respectively. 
Note that a quantum cord is always assumed to be the image of a 
germ cord. Alternatively, we always restrict our attention 
to locally trivial flat $1$-forms with values in $\Algebroidq_k$.
The quotients give the moduli spaces of germ cords and quantum cords
\begin{align*}
\Mod(M,\Algebroidg_k,\Groupoidg_k)
=\CM(M,\Algebroidg_k)/\Omega^0(M,\Groupoidg_k)
\ \ \ \ \text{and}\ \ \  
\Mod(M,\Algebroidq_k,\Groupoidq_k)
=\CM(M,\Algebroidq_k)/\Omega^0(M,\Groupoidq_k).
\end{align*}
 As in the commutative diagram of Equation~\ref{eq:T-is-surjective}, 
one can restrict attention to the cords with values in the fiber 
of $\Algebroidg_k$ over $0\in\R^k$ (or the fiber of $\Algebroidq_k$ 
over $0\in\R^k$). If $\oA=(\oA_1,\ldots,\oA_k)$ is such a 
germ cord, $\oA(0)$ is a $k\times k$ matrix with real values. 
If the determinant of this matrix is everywhere positive on $M$, 
$\oA$ corresponds to a smooth framed foliation $\Fol$ on $M$. 
Let us denote the space of all such framed foliations of 
codimension $k$  by $\Fcal_k(M)$.
The action of the gauge group $\Groupoidg_k$ preserves the 
foliation $\Fol$ associated with $\oA$ and we thus obtain the 
following commutative diagram. 
\begin{equation}\label{eq:T-is-surjective-general}
\begin{diagram}
\Fcal_k(M)&\rInto{\ \ \ \ \Ibb^\sg_k\ \ \ \ } &
\Mod(M,\Algebroidg_k,\Groupoidg_k)\\
\dTo{Id}&&\dTo{\quot}\\
\Fcal_k(M)&\rInto{\ \ \ \ \Ibb^\sq_k\ \ \ \ } &
\Mod(M,\Algebroidq_k,\Groupoidq_k)
\end{diagram}
\end{equation} 
 The proof of 
Theorem~\ref{thm:classification} may then be repeated to prove 
the following more general form of it. 
\begin{thm}\label{thm:classification-general}
There are natural classification maps 
\begin{align*}
\cg:\Mod(M,\Algebroidg_k,\Groupoidg_k)\ra 
\HC^1(M,\Gamma^\sg_k)\ \ \ \text{and}\ \ \ 
\cq:\Mod(M,\Algebroidq_k,\Groupoidq_k)\ra 
\HC^1(M,\Gamma^\sq_k)
\end{align*}
from the moduli spaces of germs cords and quantum cords to the 
space of $\Gamma^\sg_k$ and $\Gamma^\sq_k$ structures on a 
manifold $M$. This classification map induces the maps
\begin{align*}
\cg:\Ccal(M,\Algebroidg_k)
=\CM(M,\Algebroidg_k)/\sim_\sg\ra [M,B\Gamma^\sg_k]
\ \ \ \text{and}\ \ \ 
\cq:\Ccal(M,\Algebroidq_k)
=\CM(M,\Algebroidq_k)/\sim_\sq\ra [M,B\Gamma^\sq_k]
\end{align*}
from the germs concordia (concordance classes of germ cords) 
and quantum concordia (concordance classes of quantum cords) 
to the spaces of homotopy classes of maps from $M$ to 
the classifying spaces $B\Gamma^\sg_k$ and $B\Gamma^\sq_k$, 
respectively.
\end{thm}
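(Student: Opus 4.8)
The plan is to transplant the proof of Theorem~\ref{thm:classification} to the codimension-$k$ setting, the groupoids $\Groupoidg_k,\Groupoidq_k$ and algebroids $\Algebroidg_k,\Algebroidq_k$ now carrying the matrix-valued gauge action $\oY\star\oA=(\oA\circ\oY)(\oY')^{-1}$, where $\oY'$ is the invertible $k\times k$ Jacobian. The one foundational input I would establish first is the codimension-$k$ analogue of the Poincar\'e Lemma~\ref{lem:Poincare-Lemma}: every germ cord $\oA=(\oA_1,\dots,\oA_k)\in\CM(M,\Algebroidg_k)$ is locally of the form $\oY_x\star0$. Representing $\oA$ near a point on an open subset of $\R^k\times M$ and forming the $1$-forms $\oB_i=\oA_i-d\h_i$, the flatness system $d\oA_i+\sum_j\oA_j\,\partial_j\oA_i=0$ is exactly the Frobenius integrability condition for the codistribution $\langle\oB_1,\dots,\oB_k\rangle$, so it cuts out a codimension-$k$ foliation transverse to the vertical fibers $\{x\}\times\R^k$ (on which the $\oB_i=-d\h_i$ are independent). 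The transverse leaf coordinate then supplies $\oY_x\in\Omega^0(U_x,\Groupoidg_k)$ with $\oA|_{U_x}=\oY_x\star0$, positivity of $\det\oY'_x$ coming from transversality with the standard orientation. This is a routine upgrade of the one-dimensional box-and-plaque argument.

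With local triviality in hand, the construction of $\cg$ and its bijectivity repeat almost verbatim. Covering $M$ by sets $U_\alpha$ with $\oA|_{U_\alpha}=\oY_\alpha\star0$, the overlaps force $\cg_{\alpha\beta}=\oY_\alpha\circ\oY_\beta^{-1}$ to be locally constant (now as germs of diffeomorphisms of $\R^k$), giving a class $\cg(\oA)\in\HC^1(M,\Gamma^\sg_k)$; independence of the choices and of the cover is checked as before. For injectivity I would build the global gauge from $\oZ_\alpha=\oX_\alpha^{-1}\circ d_\alpha\circ\oY_\alpha$. For surjectivity, given a cocycle I would set $\oY_\alpha^{-1}=\sum_\gamma\lambda_\gamma\,\cg_{\gamma\alpha}$ and glue, the matching identity $\bigl(\sum_\gamma\lambda_\gamma\cg_{\gamma\alpha}\bigr)\circ\cg_{\alpha\beta}=\sum_\gamma\lambda_\gamma\cg_{\gamma\beta}$ holding in any dimension since precomposition is linear. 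The quantum statements then follow by composing with $\quot$, using that $\quot$ of a germ cord is automatically locally trivial.

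The genuinely new obstacle, and the step I expect to be hardest, is that the codimension-one construction rested on $\mathrm{GL}^+_1(\R)=\R_{>0}$ being convex, which fails for $\mathrm{GL}^+_k(\R)$ when $k\ge2$: the barycentric average $\oY_\alpha^{-1}=\sum_\gamma\lambda_\gamma\cg_{\gamma\alpha}$ need not be a local diffeomorphism. My plan to circumvent this exploits that a germ cord valued in $\Algebroidg_k$ has normal data pulled back from $T\R^k$ along its source map, so its Haefliger structure is framed and the linear part of the transition cocycle, namely the derivatives $D\cg_{\gamma\alpha}$ at the base point, splits as $g_\gamma g_\alpha^{-1}$ with $g_\alpha:U_\alpha\to\mathrm{GL}^+_k(\R)$. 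After conjugating the $\cg_{\gamma\alpha}$ by germs with these linearizations, one may assume the transition germs have derivative the identity at the base point; then the averaged Jacobian is $\sum_\gamma\lambda_\gamma I=I$ there, so $\oY_\alpha^{-1}$ is a bona fide local diffeomorphism and the gluing goes through. Checking that this reduction is compatible with the source-map bookkeeping is the delicate part.

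To descend to the concordia and obtain $\cg,\cq$ into $[M,B\Gamma^\sg_k]$ and $[M,B\Gamma^\sq_k]$, I would generalize Proposition~\ref{prop:gauge-equivalence-implies-homotopy}. Its one-dimensional linear interpolation likewise leaves the diffeomorphism group for $k\ge2$, so I would replace it by a group-level contraction: the scaling retraction $\phi_t(x)=\phi(tx)/t$ joins any orientation-preserving germ $\oY$ to its linearization $D\oY(0)\in\mathrm{GL}^+_k(\R)$ through local diffeomorphisms, and a path in the connected group $\mathrm{GL}^+_k(\R)$ joins $D\oY(0)$ to the identity; smoothing this in the parameter $s\in[0,1]$ and in the base $M$ produces the required germ (respectively quantum) concord showing that gauge-equivalent cords are concordant. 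Granting this, the standard identification $\HC^1(M,\Gamma^\sg_k)/\!\sim\,\cong[M,B\Gamma^\sg_k]$ from Haefliger's theory, together with its $\sq$-analogue, converts the bijections onto \v{C}ech cohomology into the asserted maps on concordance classes, completing the plan.
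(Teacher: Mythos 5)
Your plan is, at its core, the paper's own: the paper proves this theorem with the single remark that the proof of Theorem~\ref{thm:classification} ``may be repeated,'' and your first two paragraphs carry out exactly that repetition --- the codimension-$k$ Poincar\'e lemma via Frobenius integrability of the system $\langle \oA_1-d\h_1,\ldots,\oA_k-d\h_k\rangle$ in place of the one-dimensional box-and-plaque argument, followed by the identical cocycle bookkeeping. Where you add value is in flagging the two places where the codimension-one argument silently uses convexity of $\mathrm{GL}^+_1(\R)=\R_{>0}$; the paper does not acknowledge either. Two comments on how you handle them. First, note that the statement you are proving, unlike Theorem~\ref{thm:classification}, asserts only the \emph{existence} of the classification maps $\cg$ and $\cq$, not that they are bijections; so the surjectivity argument, and with it your entire third paragraph, is not required. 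This is fortunate, because the normalization you propose there (conjugating the cocycle so that all transition germs have identity derivative) presupposes that the linear cocycle $D\cg_{\gamma\alpha}$ splits as $g_\gamma g_\alpha^{-1}$, i.e.\ that the normal bundle of the given Haefliger structure is trivialized. That holds for structures in the image of $\cg$ but not for an arbitrary class in $\HC^1(M,\Gamma^\sg_k)$, so for $k\ge 2$ one should not expect $\cg$ to be onto, and the weakened statement of the theorem is consistent with this. Second, your repair of Proposition~\ref{prop:gauge-equivalence-implies-homotopy} is genuinely needed to make the word ``induces'' correct (the concordia must receive the map from the moduli space, so gauge-equivalent cords must be concordant), and your fix is right: the paper's interpolation $\h+e^{s/(1-s)}(\oY(\h,y)-\h)$ leaves the local diffeomorphisms for $k\ge 2$, whereas the scaling retraction $\phi_t(x)=\phi(tx)/t$ onto the linearization, followed by a path to the identity inside the connected group $\mathrm{GL}^+_k(\R)$, stays within $\Groupoidg_k$ throughout (with smoothness at $t=0$ coming from $\phi(tx)/t=\int_0^1 D\phi(stx)x\,ds$). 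With the third paragraph discarded as unnecessary, your proposal establishes everything the theorem actually claims, and does so more carefully than the paper.
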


 \bibliographystyle{siam} 

\end{document}